\renewcommand{\a}{\alpha}
\renewcommand{\b}{\beta}
\newcommand{\g}{\gamma}
\newcommand{\G}{\Gamma}
\renewcommand{\d}{\delta}
\newcommand{\D}{\Delta}
\newcommand{\h}{\chi}
\renewcommand{\l}{\lambda}
\newcommand{\m}{\mu}
\newcommand{\n}{\nu}
\newcommand{\om}{\omega}
\newcommand{\OM}{\Omega}
\renewcommand{\r}{\rho}
\newcommand{\s}{\sigma}
\renewcommand{\t}{\tau}
\renewcommand{\th}{\theta}
\newcommand{\e}{\varepsilon}
\newcommand{\f}{\varphi}
\newcommand{\F}{\Phi}
\newcommand{\x}{\xi}
\newcommand{\y}{\eta}
\newcommand{\p}{\psi}
\newcommand{\C}{{\mathbb C}}
\newcommand{\R}{{\mathbb R}}
\newcommand{\RR}{{\mathbb R}^2}
\newcommand{\T}{{\mathbf T}}
\newcommand{\Cc}{{\mathcal C}}
\newcommand{\Dc}{{\mathcal D}}
\newcommand{\Tc}{{\mathcal T}}
\newcommand{\Oc}{{\mathcal O}}
\newcommand{\curl}{{\rm curl}\,}
\newcommand{\diver}{{\rm div}\,}
\newcommand{\pd}{\partial}
\newcommand{\dt}{\partial_t}
\newcommand{\eps}{\varepsilon}
\newcommand{\supp}{\operatorname{supp\,}}
\newcommand{\loc}{\operatorname{{loc}}}
\newtheorem{theorem}{Theorem}[section]
\newtheorem{proposition}[theorem]{Proposition}
\newtheorem{lemma}[theorem]{Lemma}
\newtheorem{corollary}[theorem]{Corollary}
\theoremstyle{remark}
\newtheorem{remark}[theorem]{Remark}
\numberwithin{equation}{section}
\newcommand{\na}{\nabla}
\newcommand{\vti}{\tilde{v}}
\newcommand{\oti}{\tilde{\om}}
\begin{document}


\title[Uniqueness for Euler equations on singular domains]{Uniqueness for two dimensional incompressible ideal flow on singular domains}
\author{C. Lacave}

\address[C. Lacave]{Universit\'e Paris-Diderot (Paris 7)\\
Institut de Math\'ematiques de Jussieu - Paris Rive Gauche\\
UMR 7586 - CNRS\\
B\^atiment Sophie Germain \\
Case 7012\\
75205 PARIS Cedex 13\\
France.} \email{lacave@math.jussieu.fr}

\date{\today}

\begin{abstract}
The existence of a solution to the two dimensional incompressible Euler equations in singular domains was established in \cite{GV_lac}. The present work is about the uniqueness of such a solution when the domain is the exterior or the interior of a simply connected set with corners, although the velocity blows up near these corners. In the exterior of a curve with two end-points, it is showed in \cite{lac_euler} that this solution has some interesting properties, as to be seen as a special vortex sheet. Therefore, we prove the uniqueness, whereas the problem of general vortex sheets is open.
\end{abstract}

\maketitle

\tableofcontents

\section{Introduction}

The motion of a two dimensional flow can be described by the velocity 
$u(t,x) = (u_1,u_2)$ and the pressure $p$. Concerning incompressible ideal flow in an open set $\OM$, the pair $(u,p)$ verifies the Euler equations:
\begin{equation} \label{Euler}
\left\{
\begin{aligned}
 \pd_t u + u \cdot \na u + \na p & = 0, \quad  t > 0, x \in \Omega  \\
 \diver u & = 0, \quad  t > 0, x \in \Omega 
\end{aligned}
\right.
\end{equation}
endowed with an initial condition and an impermeability condition at the boundary $\pd \Omega$: 
\begin{equation} \label{conditions}
u\vert_{t=0} = u_0, \quad u \cdot \hat n\vert_{\pd \Omega} = 0.   
\end{equation}
The vorticity $\om$ defined by
\[ \om := \curl u = \pd_1 u_2 - \pd_2 u_1\]
plays a crucial role in the study of the ideal flow, thanks to the transport nature governing it:
\begin{equation}\label{transport}
\pd_t \om + u\cdot \na \om= 0 .
\end{equation}

When $\OM$ and $u_0$ are smooth, the well-posedness of system \eqref{Euler}-\eqref{conditions}  has been of course the matter of many works.  Starting from the paper of Wolibner in bounded domains \cite{Wo}, McGrath treated the case of the full plane \cite{MG}, and finally Kikuchi studied the exterior domains \cite{kiku}. In the case where the vorticity is only assumed to be bounded, existence and uniqueness of a weak solution has  been established by Yudovich in \cite{yudo}. We quote that the well-posedness result of Yudovitch applies to smooth  bounded domains, and to unbounded ones under further decay assumptions.

\medskip
We stress that all above studies require  $\pd \Omega$ to be at least  $C^{1,1}$. Roughly,  the reason is the following: due to the non-local character of the Euler equation, these works rely on   global in space estimates of $u$ in terms of  $\omega$. These estimates  {\em up to the boundary} involve  Biot and Savart type kernels, corresponding to operators such as $\na \Delta^{-1}$.  Unfortunately,  such operators are known  to behave badly in general non-smooth domains.  This explains why well-posedness results are dedicated to regular domains.

\medskip

However the case of a singular obstacle is physically relevant. For example, the study of the perturbation created by a plane wing stays a capital issue to determine the safety time between two landings in big airports.

Without solving the question of uniqueness, Taylor established in \cite{taylor} the existence of a global weak solution of \eqref{Euler}-\eqref{conditions} in a bounded sharp convex domain.  He used that $\Omega$ convex implies that  the solution $v$ of the  Dirichlet problem 
$$ \Delta v = f \: \mbox{ in } \: \Omega, \quad v\vert_{\pd \Omega} = 0 $$
belongs to $H^2(\Omega)$ when the  source term $f$ belongs to $L^2(\Omega)$, irrespective of the domain regularity.  Nevertheless, this interesting result still leaves aside many situations of practical interest, notably flows around irregular obstacles. Recently, the article \cite{lac_euler} gave such a result in the exterior of a $C^2$ Jordan arc, where it is noted that the velocity blows up near the end-points of the arc. In particular, it shows  that the previous property on the Dirichlet problem is false in domains with some bad corners.

The question of the existence of global weak solutions is now solved for a large class of singular domains in \cite{GV_lac}. The authors therein considered two kinds of domains: any open bounded domain where we retrieve a fixed (possibly zero) number of closed sets with positive capacity, and any exterior domain of one connected closed set with positive capacity. 

{\em Our goal here is to prove that such a solution is unique if the domain is bounded, simply connected with some corners, or if it is the complementary of a closed simply connected bounded set with some corners. We prove the uniqueness for an initial vorticity which is bounded, compactly supported in $\OM$ and having a definite sign}.

\medskip

More precisely, we consider two kinds of domains. On one hand, we denote by $\Omega$ a bounded, simply connected open set, such that $\pd \OM$ has a finite number of corners $z_i$ with angles $\a_i$ (i.e. locally, $\OM$ coincides with the sector $\{z_i+(r\cos \th, r\sin \th); r>0, \th_i<\th<\th_i+\a_i\}$). On the other hand, we denote by $\OM:=\R^2\setminus \Cc$, where $\Cc$ is a bounded, simply connected closed set, such that $\pd \OM$ has a finite number of corners.

\medskip
To define a global  weak solution to the Euler equation,  let us point out that the  space $L^2(\Omega)$ is not suitable for  weak solutions in unbounded domain.  Working with square integrable velocities in exterior domains is too restrictive (see page \pageref{1/x} to note that $u$ behaves in general like $1/|x|$ at infinity), so we consider initial data satisfying 
\begin{equation} \label{typeinitialdata}
u_0 \in L^2_{\loc}(\overline{\Omega}), \quad u_0 \rightarrow 0 \:  \mbox{ as } \: |x| \rightarrow +\infty,  \quad  \curl u_0 \in L^\infty_c (\Omega), \quad \diver u_0 = 0, \quad u_0 \cdot \hat n\vert_{\pd \Omega} = 0. 
\end{equation}
Note that the divergence free condition and this last impermeability condition have to be understood in the weak sense: \begin{equation}\label{imperm}
 \int_\Omega u^0 \cdot h = 0  \quad \text{for all } h \in G_{c}(\Omega):=\{w\in L^2_{c}(\overline{\Omega}) \ : \ w=\nabla p, \ \text{ for some } p\in H^1_{\loc}(\Omega)\}.
 \end{equation}
Let us stress that this set of initial data is large: we will show later that for any function $\om_0 \in L^\infty_c(\Omega)$, there exists $u_0$ verifying \eqref{typeinitialdata} with $\curl u_0= \om_0$.

Similarly, the weak form of the  divergence free and tangency conditions on the Euler solution $u$  will read:  
\begin{equation} \label{imperm2}
\forall \varphi \in \mathcal{D}\left([0,+\infty); G_c(\Omega)\right), \quad \int_{\R^+} \int_\Omega u  \cdot \na \varphi = 0.
\end{equation}
Finally, the weak form  of the momentum equation on $u$ will read:
\begin{equation} \label{Eulerweak}
\forall \, \varphi \in \mathcal{D}\left([0, +\infty[ \times \Omega\right) \text{ with } \diver \varphi = 0, \quad  \int_0^{\infty} \int_\Omega \left( u \cdot \pd_t \varphi +   (u \otimes u) : \na \varphi \right)  = -\int_\Omega u_0 \cdot \varphi(0, \cdot) .
\end{equation}

For $\OM$ an open bounded simply connected domain, or $\OM$ the complementary of a compact simply connected domain $\mathcal{C}$, we get the existence of a weak solution from \cite{GV_lac}:
\begin{theorem} \label{theorem1}
Assume that $u_0$ verifies \eqref{typeinitialdata}. Then there exists 
$$u \in L^\infty_{\loc}(\R^+; L^2_{\loc} (\overline{\Omega})), \:  \curl u \in   L^\infty(\R^+;L^1 \cap L^\infty(\Omega)),$$
which is a global weak solution of \eqref{Euler}-\eqref{conditions} in the sense of \eqref{imperm2} and \eqref{Eulerweak}.
\end{theorem}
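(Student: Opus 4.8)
The plan is to combine the vorticity formulation with a conformal uniformization of the singular domain. Since $\OM$ is simply connected (bounded, or the exterior of a simply connected compact set), the Riemann mapping theorem furnishes a biholomorphism $T$ from $\OM$ onto the unit disk $D$, respectively onto $\R^2 \setminus \overline{D}$. Given a vorticity, the associated stream function $\p$ is recovered by solving $\Delta \p = \om$ in $\OM$ with $\p\vert_{\pd \OM}=0$, and one sets $u = \na^\perp \p$; the resulting Biot--Savart operator on $\OM$ is written through the Green function of $\OM$, which is the pullback by $T$ of the explicit Green function of the disk. My first step would therefore be to set up this Biot--Savart law on $\OM$ and to record the elementary identities $\curl u = \om$, $\diver u = 0$, together with the tangency condition \eqref{imperm}, all of which transform covariantly under $T$.

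The second step is to produce approximate solutions. I would regularize by smoothing the corners of $\pd \OM$, obtaining smooth domains $\OM_n \to \OM$, and by mollifying $u_0$ into data $u_0^n$ of the form \eqref{typeinitialdata} on $\OM_n$ with $\curl u_0^n =: \om_0^n \to \om_0$ in every $L^p$. On each $\OM_n$, Yudovich's theorem, applied after transporting to the disk where the decay hypotheses at infinity in the exterior case are satisfied, yields a unique global weak solution $u_n$ with bounded transported vorticity. Since the flow of $u_n$ is volume preserving and tangent to $\pd \OM_n$, the vorticity is merely rearranged, so $\|\om_n(t)\|_{L^1 \cap L^\infty(\OM_n)} = \|\om_0^n\|_{L^1 \cap L^\infty}$ for all $t$, uniformly in $n$.

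The third step is the passage to the limit. Feeding the uniform vorticity bound into the Biot--Savart law gives a uniform bound for $u_n$ in $L^\infty_\loc(\R^+; L^2_\loc(\overline{\OM}))$; here one checks that the singular behaviour of the velocity near a corner $z_i$ of opening $\a_i$, where $u_n$ grows like $|x-z_i|^{\pi/\a_i - 1}$, is still square integrable on compacts, which holds because $2\pi/\a_i > 0$. Interior elliptic regularity ($\curl u_n \in L^\infty$, $\diver u_n = 0$) provides spatial $W^{1,p}_\loc$ bounds away from the $z_i$, hence Rellich compactness there, while the momentum equation gives time equicontinuity; combining these in an Aubin--Lions--Simon type argument yields, up to a subsequence, $u_n \to u$ strongly in $L^2_\loc$ and $\om_n \rightharpoonup \curl u$ weak-$*$. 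Strong $L^2_\loc$ convergence is exactly what is needed to pass to the limit in the quadratic term $u_n \otimes u_n$ and recover \eqref{Eulerweak}, while the linear identities \eqref{imperm2} pass to the limit directly.

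The main obstacle I expect is the behaviour near the corners: as $\OM_n \to \OM$ the conformal factor $|T'|$ degenerates precisely at the $z_i$, so $u_n$ cannot be controlled uniformly pointwise there. The remedy is to argue that the test functions in \eqref{imperm2}--\eqref{Eulerweak} may be taken to vanish in shrinking neighbourhoods of the corners, the resulting error being bounded by the uniform $L^2_\loc$ bound on $u_n$ times the (vanishing) measure of those neighbourhoods. This decouples the analysis of the corner singularity from the passage to the limit and is the technical heart of the argument.
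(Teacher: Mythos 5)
Your overall strategy---smooth the corners into domains $\OM_n$, solve there by Yudovich (Kikuchi in the exterior case), use the conservation of $\|\om_n\|_{L^1\cap L^\infty}$, and pass to the limit by compactness---is exactly the strategy behind the paper's Theorem \ref{theorem1}, which the paper does not reprove but obtains by invoking \cite{GV_lac}, stressing that the key ingredient there is the $\Gamma$-convergence of $\OM_n$ to $\OM$. The trouble is that your execution has genuine gaps at precisely the two points this machinery exists to handle. (A side remark: the 2D Euler equations are not conformally invariant, so ``transporting to the disk'' to apply Yudovich is not legitimate; for smooth exterior domains one should simply cite Kikuchi.)

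First, the uniform bound for $u_n$ in $L^\infty_{\loc}(\R^+;L^2_{\loc}(\overline{\OM}))$ does not follow from ``feeding the vorticity bound into the Biot--Savart law'': the kernel $K_{\OM_n}$ involves the Riemann map $\Tc_n$ of $\OM_n$, and although each $D\Tc_n$ is bounded up to the (smooth) boundary $\pd\OM_n$, these bounds blow up as $n\to\infty$ when the smoothed corner sharpens. What is needed is an estimate uniform in $n$ (e.g.\ uniform local $L^p$ control of $D\Tc_n$ and of $\det D\Tc_n^{-1}$ near the corners), and nothing in your argument provides it; in \cite{GV_lac} the uniform $L^2$ bound is instead obtained by an energy estimate on the stream function, with an additional decomposition involving the harmonic vector field and the circulation in the exterior case, where a global $L^2$ framework fails because $u\sim 1/|x|$ at infinity---a case your proposal does not address. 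Second, your corner remedy is aimed at the wrong term and its error estimate is incorrect. In \eqref{Eulerweak} the test functions are compactly supported in $\OM$, so corners never enter the nonlinear term; the place where the boundary matters is \eqref{imperm2} (tangency of the limit field) and, more fundamentally, the identification of the limit velocity with the field attached to the limit domain $\OM$, i.e.\ the stability of the Dirichlet problem under domain convergence---this is the $\Gamma$-convergence ingredient, and it is absent from your proof. Moreover, cutting a test function off on a $\delta$-neighbourhood of a corner $z_i$ produces the term $\int u_n\cdot\varphi\,\na\chi_\delta$ with $|\na\chi_\delta|=O(1/\delta)$ on a set of measure $O(\delta^2)$, so that $\|\na\chi_\delta\|_{L^2}=O(1)$: the error is controlled by $\|u_n\|_{L^2(B(z_i,C\delta))}$ and not by ``the $L^2$ bound times the vanishing measure''; making it vanish uniformly in $n$ requires a uniform $L^p$ bound with $p>2$ near the corners, which is exactly the missing estimate of the first gap.
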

In a few words, this existence result  follows from a compactness argument, performed on a sequence of   solutions $u_n$  of the Euler equations on  the sequence of approximating domains $\Omega_n$. A key ingredient of the proof is the so-called $\Gamma$-convergence of $\Omega_n$ to $\Omega$ (see \cite{GV_lac} for the details).

\medskip
 
The main result of this article concerns the uniqueness of global weak solutions, when the initial vorticity has definite sign.
 \begin{theorem} \label{main 1}
Let $\Omega$ be a bounded, simply connected open set, such that $\pd \OM$ has a finite number of corners with angles greater than $\pi/2$ and let $u_0$ verifying \eqref{typeinitialdata}. If $\curl u_0$ is non-positive (respectively non-negative), then there exists a unique global weak solution of the Euler equations on $\OM$ verifying
$$u \in L^\infty_{\loc}(\R^+; L^2_{\loc} (\Omega)), \:  \curl u \in   L^\infty(\R^+;L^1 \cap L^\infty(\Omega)).$$
 \end{theorem}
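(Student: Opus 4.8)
The plan is to reduce the uniqueness of weak solutions to the uniqueness of the associated Lagrangian flow, and then to run a Yudovich--Osgood argument in which the only serious difficulty is the singular behaviour of the velocity at the corners. Since the vorticity is transported by \eqref{transport}, any weak solution $u$ with $\curl u_0 = \om_0$ formally satisfies $\om(t,\cdot) = \om_0 \circ \Phi_t^{-1}$, where $\Phi$ is the flow of $u$, and $u$ is recovered from $\om$ through the Biot--Savart operator $u = \na^\perp \psi$, $\D\psi = \om$, $\psi|_{\pd\Omega}=0$ attached to $\Omega$ (single-valued here because $\Omega$ is bounded and simply connected, so there is no circulation ambiguity). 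Hence it suffices to prove that two solutions sharing the initial datum produce the same flow. The whole thing hinges on a single regularity fact, to be established \emph{for each} weak solution in the admissible class: that $u$ is log-Lipschitz on a fixed neighbourhood of the time-dependent support of $\om$. The two ingredients I would need are therefore a quantitative Biot--Savart estimate and a confinement statement keeping $\supp\om$ away from the corners.

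For the Biot--Savart estimate I would use the Riemann map $T:\Omega\to\mathbb{D}$ onto the unit disc, for which the Green function is explicit, $G_\Omega(x,y)=\tfrac{1}{2\pi}\log\frac{|T(x)-T(y)|}{|1-\overline{T(y)}T(x)|}$. Differentiating, the velocity carries a prefactor $\overline{T'(x)}$, and near a corner $z_i$ of angle $\a_i$ the straightening $T(x)-T(z_i)\sim (x-z_i)^{\pi/\a_i}$ gives $T'(x)\sim(x-z_i)^{\pi/\a_i-1}$; thus $u\sim|x-z_i|^{\pi/\a_i-1}$ and $\na u\sim|x-z_i|^{\pi/\a_i-2}$. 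Away from the corners $\pd\Omega$ is smooth, the kernel is of Calder\'on--Zygmund type, and the classical computation yields $|u(x)-u(y)|\le C\,|x-y|\log\frac{1}{|x-y|}$ on any compact subset of $\overline\Omega$ avoiding a fixed neighbourhood of the corners, with $C$ controlled by $\|\om\|_{L^1\cap L^\infty}$ and the distance to the corners, as in \cite{lac_euler}. The assumption $\a_i>\pi/2$ is exactly the threshold below which $\na u$ would be bounded and $u$ Lipschitz up to the corner; for $\a_i>\pi/2$ the velocity is at best H\"older (and genuinely unbounded once $\a_i>\pi$), so the flow is not determined at the corner itself and the confinement of the next step becomes indispensable.

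The heart of the matter, and the step I expect to be the main obstacle, is the confinement of the vorticity away from the corners, uniformly in time; this is precisely where the definite-sign hypothesis is used. Assuming $\om_0\le 0$, the stream function solves $\D\psi=\om\le0$ with $\psi|_{\pd\Omega}=0$, so by the maximum principle $\psi\ge0$ throughout $\Omega$; consequently the coefficient of the leading corner profile $r^{\pi/\a_i}\sin(\pi\th/\a_i)$ is nonnegative, which fixes the direction of the swirl generated near each $z_i$. I would exploit this through a weighted estimate: for a well-chosen nonnegative weight $f_i$ vanishing at $z_i$ and adapted to the conformal geometry (built from $T$ or from the harmonic measure of the corner), I would use $\diver u=0$ to write $\frac{d}{dt}\int_\Omega f_i\,\om = \int_\Omega \om\, u\cdot\na f_i$ and show, using $\om\le0$ together with the sign of $\psi$, that this quantity cannot drive the support into the corner, i.e. that $\operatorname{dist}(\supp\om(t,\cdot),\{z_i\})$ stays bounded below by a positive constant depending only on the initial data. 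The condition $\a_i>\pi/2$ enters here to make the relevant integrals converge and the sign of $u\cdot\na f_i$ exploitable, the $|x-z_i|^{\pi/\a_i-1}$ velocity being too singular to control otherwise.

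Granting the confinement, both vorticity supports lie for all time in a fixed compact $K\Subset\overline\Omega$ on which the Biot--Savart estimate yields a uniform-in-time log-Lipschitz constant, and this regularity is enough to define a unique flow for each solution. I would then close the argument in Yudovich's Lagrangian form \cite{yudo}: writing $\Phi^1,\Phi^2$ for the two flows and setting $Q(t)=\int_K|\Phi^1(t,x)-\Phi^2(t,x)|\,dx$, the transport structure \eqref{transport} and the log-Lipschitz bound give a differential inequality $Q'(t)\le C\,Q(t)\log\frac{1}{Q(t)}$; since $Q(0)=0$ and $\int_0^{1/2}\frac{ds}{s\log(1/s)}=+\infty$, Osgood's lemma forces $Q\equiv0$. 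Hence $\Phi^1=\Phi^2$, so $\om^1=\om^2$, and by single-valuedness of the Biot--Savart operator $u^1=u^2$, which is the claimed uniqueness. The remaining care is to justify at the weak-solution level that the above manipulations are licit on $K$, which is again supplied by the log-Lipschitz regularity obtained in the first two steps.
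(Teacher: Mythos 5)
Your high-level plan (quantitative Biot--Savart bounds, confinement of the vorticity away from the singular part of the boundary via the sign condition, then an Osgood/Yudovich closing argument) matches the architecture of the paper, and your corner asymptotics $u \sim |x-z_i|^{\pi/\a_i-1}$ agree with Theorem \ref{grisvard}. But the step you yourself call the heart of the matter --- confinement --- is where the proposal has a genuine gap, and the mechanism you propose would not close. You suggest a weight $f_i \ge 0$, vanishing at $z_i$, fixed in time and independent of the solution, and the identity $\frac{d}{dt}\int_\Omega f_i\,\om=\int_\Omega \om\, u\cdot\na f_i$. Two things go wrong. First, a bound on the weighted integral $\int_\Omega f_i\,\om$ gives no control on $\operatorname{dist\,}(\supp\om(t,\cdot),z_i)$: precisely because $f_i$ vanishes at the corner, the weight is blind to vorticity sitting near $z_i$ (and a cutoff supported only near the corner fares no better, since the error term lives on $\supp \na f_i$ rather than where the cutoff equals one, so it cannot be absorbed by Gronwall). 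Second, for any solution-independent weight the advection term $\int_\Omega \om\, u\cdot\na f_i$ contains the velocity itself, which is exactly the quantity that is unbounded at the corner and carries no exploitable sign; the maximum-principle observation $\psi\ge 0$ is far too weak, since what is needed is a quantitative two-sided comparison between the stream function and the distance to the boundary. The paper's resolution (Sections \ref{sect : 3}--\ref{sect : 4}) is that the weight must be built from the solution's own stream function $L_1$: then $u\cdot\na L_1\equiv 0$ identically (because $u=\na^\perp L_1$), the advection term disappears exactly, and everything reduces to comparing $\pd_t L_1$ with $L_1$, namely $C_2(|\Tc(x)|-1)\le L_1 \le C_1(|\Tc(x)|-1)^{1/2}$ (the lower bound is where the sign hypothesis and conservation of mass enter, Lemma \ref{L1 est}) and $|\pd_t L_1|\le C_3(|\Tc(x)|-1)\bigl(1+|\ln(|\Tc(x)|-1)|\bigr)$ (Lemma \ref{dtL1 est}); a Gronwall argument on $-\ln L_1$ then yields confinement away from the \emph{whole} boundary, not only from the corners. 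None of these ingredients, in particular the key cancellation, appears in your sketch.

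Two secondary points. Your closing step is Lagrangian (Osgood on the flows), whereas the paper closes in Eulerian form: it compares $v_i = K_{\R^2}*\bar\om_i$, exploits that $\tilde v=v_1-v_2$ is harmonic near the boundary (which requires the vorticity to vanish near the whole boundary --- confinement away from the corners alone would not suffice there), and runs Yudovich's iteration $\|\tilde v(T)\|_{L^2}^2\le Cp\int_0^T\|\tilde v\|_{L^2}^{2-2/p}\,dt$ with $p\to\infty$. A Lagrangian route is not hopeless, but identifying a weak solution with a flow-transported solution is itself nontrivial for these singular velocities --- this is exactly what the DiPerna--Lions renormalization theory (Lemmas \ref{W11} and \ref{renorm1}) is used for in the paper --- and your final sentence delegates this justification to the log-Lipschitz regularity whose availability depends on the confinement you have not established, so the reasoning as written is circular.
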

 
 In exterior domains, the vorticity is not sufficient to uniquely determine the velocity. We need the circulation around $\Cc$. As we will see in Subsection \ref{sect : exist}, for $u_0$ verifying \eqref{typeinitialdata}, we can define the initial circulation:
 \[ \g_0:= \oint_{\pd \Cc} u_0\cdot \hat \tau\, ds.\]
Inversely, let us mention that we can fix independently the vorticity and the circulation: we will show that for any function $\om_0 \in L^\infty_c(\Omega)$ and any real number $\g\in \R$, there exists a unique $u_0$ verifying \eqref{typeinitialdata} with $\curl u_0= \om_0$ and with circulation around $\mathcal{C}$ equal to $\g$.
 
 Assuming a sign condition on $\g_0$, we will prove a uniqueness theorem in exterior domains.
 \begin{theorem} \label{main 2}
Let $\OM:=\R^2\setminus \Cc$, where $\Cc$ is a compact, simply connected set, such that $\pd \OM$ has a finite number of corners with angles greater than $\pi/2$. Let $u_0$ verifying \eqref{typeinitialdata}. If $\curl u_0$ is non-positive and $\g_0\geq -\int \curl u_0$(respectively $\curl u_0$ non-negative and $\g_0 \leq -\int \curl u_0$), then there exists a unique global weak solution of the Euler equations on $\OM$, verifying
$$u \in L^\infty_{\loc}(\R^+; L^2_{\loc} (\Omega)), \:  \curl u \in   L^\infty(\R^+;L^1 \cap L^\infty(\Omega)).$$
 \end{theorem}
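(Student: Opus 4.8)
The plan is to reduce the exterior problem to the smooth model domain $\Pi := \R^2 \setminus \overline{D(0,1)}$ by means of a biholomorphism $T : \OM \to \Pi$, whose existence is guaranteed by the Riemann mapping theorem since $\OM$ is the exterior of a compact simply connected set. Under this conformal change of variables the stream function transforms as a scalar while the Laplacian picks up the conformal factor, so that a solution $u = \na^\perp \psi$ of \eqref{Euler}--\eqref{conditions} corresponds to a pair $(\tilde u,\tilde\om)$ on $\Pi$ in which the transported quantity is still the vorticity, whereas the velocity is recovered from $\tilde\om$ and the (Kelvin-)conserved circulation $\g_0$ through the explicit Biot--Savart law on the exterior of the disk: image charges across $|y|=1$ plus the circulation term $\frac{\b}{2\pi}\,y^\perp/|y|^2$, with $\b = \g_0 + \int \curl u_0$ the effective circulation seen from infinity. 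The corners of $\OM$ are sent to a finite set of points $y_i$ on the unit circle, and the entire geometric singularity is now concentrated in the conformal factor $|T'|$ and its inverse.

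First I would record the behaviour of $T$ near a corner of angle $\a_i$: the map $T^{-1}$ behaves like a power of exponent $\a_i/\pi$, so that $|(T^{-1})'|^2 \sim |y-y_i|^{2\a_i/\pi - 2}$ near $y_i$. The hypothesis $\a_i > \pi/2$ is exactly what forces this Jacobian weight to lie in $L^p_{\loc}$ for some $p>2$; since $\curl u_0 \in L^\infty_c$, a Calderón--Zygmund / Morrey estimate on the model Biot--Savart kernel then yields that the model velocity is bounded and Hölder, hence that the physical velocity of any weak solution is bounded and has an Osgood (log-Lipschitz) modulus of continuity on every region kept at a fixed positive distance from the $y_i$. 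Equivalently, in the physical variable the velocity is log-Lipschitz away from the corners, but near a reentrant corner ($\a_i>\pi$) it genuinely blows up. This log-Lipschitz bound is the quantitative input a Yudovich-type argument needs, yet it is only available away from the corners, which is precisely why the confinement step below is indispensable.

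The heart of the matter, and the main obstacle, is to show that the sign condition confines the vorticity support away from the corners for all time. With $\curl u_0 \le 0$ and $\g_0 \ge -\int\curl u_0$ the effective circulation $\b$ is non-negative, and I would show that, on circles near $\pd\Pi$, the tangential component of the transport velocity then keeps a constant sign: the positive circulation and the signed vorticity reinforce rather than cancel, so that there is no stagnation point along the boundary and the flow sweeps around the disk in one fixed sense. I would convert this into a genuine lower bound on the distance from the transported support of $\tilde\om$ to the set $\{y_i\}$: the support starts at positive distance from $\pd\OM$ because $\curl u_0$ is compactly supported in $\OM$, and the absence of boundary stagnation points, together with the transport structure and the vanishing of the radial velocity on $\pd\Pi$, prevents any particle from being advected into a corner in finite time (a barrier / maximum-principle argument on the angular variable). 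The delicate point is to make this bound uniform in time and to control the interaction between the singular self-induced velocity near a reentrant corner and the confining effect of the circulation; this is exactly where both the sign of $\curl u_0$ and the inequality $\g_0 \ge -\int\curl u_0$ are used in an essential way, and it is the step with no analogue in the smooth theory.

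Once confinement is established, uniqueness follows by a classical Osgood argument in the spirit of Yudovich. Given two weak solutions $u^1,u^2$ with the same data, their vorticities remain supported in a common set staying at positive distance from $\{y_i\}$, where by the second step the velocities are log-Lipschitz uniformly in time. I would then compare the two flow maps (equivalently, estimate $u^1-u^2$ in a quasi-Lipschitz norm) and close a Gronwall inequality with an Osgood modulus $\f(s)=s(1+|\log s|)$, whose reciprocal is non-integrable at $0$; this forces the flows, hence the vorticities and finally the velocities, to coincide. Handling the unboundedness of $\OM$ requires only checking the decay at infinity coming from $u_0 \to 0$ and the conserved circulation, so that the identities \eqref{imperm2}--\eqref{Eulerweak} remain legitimate in the estimates; these verifications, as well as the interior estimates, are routine adaptations of the bounded case of Theorem \ref{main 1} once the confinement and the log-Lipschitz bound are in hand.
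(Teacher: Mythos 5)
Your skeleton (Riemann map to the exterior of the disk, explicit Biot--Savart law with image terms and circulation, confinement of the vorticity via the sign hypotheses, then a Yudovich argument) is the paper's skeleton, and your preliminary observations are correct, in particular that $\a_i>\pi/2$ is what puts $\det D\Tc^{-1}$ in $L^{p_0}_{\loc}$ for some $p_0>2$. But the step you yourself call the heart of the matter --- confinement --- is asserted, not proved, and the mechanism you propose would fail. First, ``no stagnation point along the boundary'' is literally false when $\OM$ has a convex corner ($\pi/2<\a_i<\pi$): there $D\Tc(x)=\Oc(|x-z_i|^{\pi/\a_i-1})$ tends to zero, hence by \eqref{biot unbd} the velocity vanishes at $z_i$. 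Second, even on smooth portions of $\pd\OM$ where the Hopf-type sign of the tangential velocity is correct, the flow is unsteady: particles cross instantaneous streamlines, so the absence of stagnation points at each fixed time provides no barrier, and a maximum principle in the angular variable says nothing about the normal approach to the boundary. Third, the transport field seen in the model coordinates is $\frac1{2\pi}|\det D\Tc|\circ\Tc^{-1}$ times a bounded field, and $|\det D\Tc|\circ\Tc^{-1}(y)\sim|y-y_i|^{2-2\a_i/\pi}$; at a convex corner image this vanishes with a H\"older exponent strictly less than $1$, and at a reentrant one it blows up, so the field is not Lipschitz (not even Osgood) at the $y_i$ and no ODE-type barrier argument can exclude finite-time arrival there. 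What the paper actually does (Sections \ref{sect : 3}--\ref{sect : 4}) is quantitative: it takes the stream function $L_1$ of \eqref{L1} as Liapounov function; under the sign hypotheses $L_1>0$ in $\OM$ and $L_1$ vanishes exactly on $\pd\OM$, with $C_2(|\Tc(x)|-1)\leq L_1\leq C_1(|\Tc(x)|-1)^{1/2}$ (Lemmas \ref{L1 maj}, \ref{L1 est}); since $u\cdot\na L_1\equiv0$, along a trajectory $L_1$ changes only through $\pd_t L_1$, and the estimate $|\pd_t L_1|\leq CL_1(1+|\ln L_1|)$ (Lemma \ref{dtL1 est}) closes a Gronwall inequality for $-\ln L_1$, yielding a positive (doubly exponentially small in time) lower bound on the distance of $\supp\om(t,\cdot)$ to the \emph{whole} of $\pd\OM$, not merely to the corners. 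Nothing in your sketch supplies this estimate.

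There is a second genuine gap: your uniqueness step compares flow maps, but for these weak solutions trajectories are not known to exist --- this is precisely why the paper treats the Lagrangian computation of Section \ref{sect : 3} as heuristic only. The paper substitutes renormalization theory for trajectories: Lemma \ref{W11} shows $\bar u\in L^\infty_{\loc}(\R^+;W^{1,1}_{\loc}(\R^2))$ (again using $\a_i>\pi/2$), DiPerna--Lions then gives the conservation laws, and the confinement is re-proved without any flow in Proposition \ref{constant_vorticity_2}, using $\b(\om)=\om^2$ and the test function built from $L_1$. Uniqueness itself is then obtained not by an Osgood comparison of flows but by an energy estimate on $\tilde v=v_1-v_2$ with $v_i=K_{\R^2}\ast\bar\om_i$: because both vorticities vanish on a neighborhood of $\pd\OM$, $\tilde v$ is harmonic there, so $\|\tilde v\|_{L^\infty}$ and $\|\na\tilde v\|_{L^\infty}$ near the boundary are controlled by $\|\tilde v\|_{L^2}$, and this is exactly what absorbs the non-bounded harmonic part $w_1$ near the corners in the Gronwall loop. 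If, as in your plan, the vorticity is only kept away from the corner points, this harmonicity argument is unavailable and the vortex-sheet term $g\,\d_{\pd\OM}$ must be confronted directly; so neither the ``routine adaptation'' of the Yudovich step nor the flow-map comparison is justified as written.
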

 
In particular, we will also prove that the velocity blows up near the obtuse corners: if $\pd \OM$ admits at $z_0$ a corner of angle $\a$, then the velocity behaves near $z_0$ like $\dfrac{1}{|x-z_0|^{1-\frac{\pi}{\a}}}$. We refind that in the case where $\mathcal{C}$ is a Jordan arc (see \cite{lac_euler}) the velocity blows up like the inverse of the square root of the distance near the end-points ($\a=2\pi$). 

Although it is possible to show the existence of weak solution for $\om_0\in L^1\cap L^p(\OM)$, with some $p>1$ (see \cite{DiPernaMajda} in smooth domains and \cite{GV_lac} in non-smooth domains), we recall that the result of uniqueness requires $p=\infty$. Indeed, in the proof of Yudovich, we use that in smooth domains $\om_0\in L^1\cap L^\infty(\OM)$ implies that the velocity belongs to $L^\infty \cap W^{1,p}(\OM)$ for all $p\in (1,\infty)$. As far as we know, the only result with lower regularity is the case of a vorticity $\om_0=\tilde \om_0 + \a \d_{z_0}$ where $\tilde \om_0\in L^1\cap L^\infty$ is constant near the Dirac mass (see \cite{mar_pul,lac_miot}). However, in that case, the authors use the explicit form of the singularity. Therefore, we manage here to adapt the Yudovich proof for a velocity  not bounded up to the boundary. For example, in a  bounded domain with a cusp point ($\a=2\pi$), the velocity belongs in $L^p\cap W^{1,q}(\OM)$ only for $p<4$ and $q<4/3$, and proving the uniqueness could seem surprising when we keep in mind that we need $W^{1,p}$ for all $p$ large in the Yudovich proof. The key here is to use the sign condition in Theorems \ref{main 1} and \ref{main 2} in order to prove that the vorticity never meets the boundary.

For a sake of clarity, we assume that $\pd \OM$ is locally a corner, but we can replace a corner by a singular point, where the jump of the tangent angle is equal to $\a$ (see Remark \ref{DT loc} for a discussion concerning the optimal domain regularity).

\medskip

The remainder of this work is organized in six sections. We introduce in Section \ref{sect : 2} the biholomorphism $\Tc$ and the Biot-Savart law (law giving the velocity in terms of the vorticity) in the interior or the exterior of one simply connected domain. We will recall the existence of weak solution in this section, and derive some formulations (on vorticity and on extensions in $\R^2$).  We will take advantage of this section to show that the weak solution is a renormalized solution in the sense of DiPerna-Lions \cite{dip-li}, which will allow us to prove that the $L^p$ norm of the vorticity for $p\in [1,\infty]$, the total mass $\int_{\OM} \om(t,\cdot)$ and the circulation of the velocity around $\Cc$ are conserved quantities.

Let us mention that the explicit form of the Biot-Savart law is one of the key of this work, and it explains why $\OM$ is assumed to be the interior or the exterior of a simply connected domain. This law will read
\[ u(t,x) = D\Tc (x)^T R[\om]\]
where $R[\om]$ is an integral operator. Using classical elliptic theory, we will obtain the exact behavior of the biholomorphism $\Tc$ near the corners, and then the behavior of the velocity. We note that the blow-up is stronger if the angle $\a$ is bigger. Unfortunately, the following study needs sometimes that the integral operator $R[\om]$ verifies good estimates, which are possible only if we assume that all the angles $\a_i$ are greater than $\pi/2$ (namely in Proposition \ref{biot est} to prove that $R[\om]$ is bounded,  in Lemma \ref{ortho} to establish the equation verified by the extended functions, in Lemma \ref{W11} to use the renormalization theory).

Section \ref{sect : 3} is the central part of this paper: we will prove that the support of $\om$ never meets the boundary if we assume that the characteristics corresponding to \eqref{transport} exist and are differentiable. The idea is to introduce a good Liapounov function, which blows up if the trajectories meet the boundary. Next, we will establish some estimates implying that this Liapounov energy is bounded which will give the result. Although we cannot say that the characteristics are regular for weak solutions, this computation gives us an excellent intuition.

In light of this proof, we rigorously prove in Section \ref{sect : 4}, thanks to the renormalization theory, that we have the same property, even if we do not consider the characteristics.

Finally, we prove Theorems \ref{main 1}-\ref{main 2} in Section \ref{sect : 5}. We will introduce  $v:= K_{\R^2} * \om$, where $K_{\R^2}$ is the Biot-Savart kernel in the full plane. As $\om$ does not meet the boundary, it means that $\diver v = \curl v \equiv 0$ in a neighborhood of the boundary, i.e. $v$ is harmonic therein. This provides in particular a control of its $L^\infty$ norm (as well as the $L^\infty$ norm for the gradient) by its $L^2$ norm. Although the total velocity is not bounded near the boundary, but just integrable, this argument allows us to yield a Gronwall-type estimate, as Yudovich did.

\medskip

Therefore, the fact that the support of the vorticity stays far from the boundary will imply the uniqueness result. This idea was already used in \cite{lac_miot}, in the case of one Dirac mass in the vorticity. In this article, we consider the Euler equations in $\R^2$ when the initial vorticity is composed by a regular part $L^\infty_c$ and a Dirac mass. The equation is called the system mixed Euler/point vortex and derived in \cite{mar_pul}. When trajectories exist, it is proved that they do not meet the point vortex in \cite{mar_pul} if the point vortex moves on the influence of the regular part, and in \cite{mar} if the Dirac is fixed. The method is also constructed on Liapounov functions. An important issue in \cite{lac_miot} is to generalize this result when trajectories are not regular. The Lagrangian formulation gives us a helpful intuition, it is the reason why we choose first to present the proof of uniqueness assuming the differentiability of trajectories (Section \ref{sect : 3}). Moreover, proving in Section \ref{sect : 4} that the vorticity never meets the boundary, we state that the ``weak'' Lagrangian flow coming from the renormalization theory evolves in the area far from the corners. As the velocity is regular enough in this region, we can conclude that the flow is actually classical and regular.

Section \ref{sect : technical} is devoted to the proofs of some technical lemmas.

We finish this article by Section \ref{sect : 6} with some final comments. 
In the exterior of the Jordan arc (see \cite{lac_euler}), we will make a parallel with the vortex sheet problem. We will also give some explanations about the  sign assumptions in the main theorems.

\medskip

We warn the reader that we write in general the proofs in the case of exterior domains. In this kind of domain, we have to take care of integrability at infinity, to control the size of the support of the vorticity, and we have also to consider harmonic vector fields and circulations of velocities around $\Cc$. The proofs in the case of bounded domains are strictly easier, without additional arguments. We will make sometimes some remarks about that.

\section{Biot-Savart law and existence}\label{sect : 2}

As in \cite{ift_lop_euler, lac_euler, lac_small}, the crucial assumption is that we work in dimension two outside (or inside) one simply connected domain. Identifying $\R^2$ with the complex plane $\C$, there exists a biholomorphism $\Tc$ mapping $\OM$ to the exterior (resp. to the interior) of the unit disk. Thanks to this biholomorphism, we will obtain an explicit formula for the Biot-Savart law: the law giving the velocity in terms of the vorticity. This explicit formula will be used to construct the Liapounov function. We give in the following subsection the properties of this Riemann mapping.

\subsection{Conformal mapping}\

Let $\OM$ as in Theorem \ref{main 2} (resp. as in Theorem \ref{main 1}), then the Riemann mapping theorem states that there exists a unique biholomorphism $\Tc$ mapping $\OM$ to $\overline{B(0,1)}^c$ (resp. to $B(0,1)$) such that $\Tc(\infty)=\infty$ and $\Tc'(\infty)\in \R^+_*$ (resp. $\Tc(z_0)=0$ and $\Tc'(z_0)\in \R^+_*$, for a $z_0\in \OM$). We remind that the last two conditions mean  
$$\mathcal{T}(z) \sim \lambda z, \quad   |z| \sim +\infty, \quad \mbox{ for some } \:  \lambda> 0. $$

\begin{theorem} \label{grisvard}
Let assume that $\pd \OM$ is a $C^\infty$ Jordan curve, except in a finite number of point $z_1$, $z_2$, ..., $z_n$ where $\pd \OM$ admits corner of angle $\a_i>\frac{\pi}{2}$ (i.e. $\OM$ coincides locally with the sector $\{z_i+(r\cos \th, r\sin \th); r>0, \th_i<\th<\th_i+\a_i\}$). Then the biholomorphism $\Tc$ defined above satisfies
 \begin{itemize}
\item $\Tc^{-1}$ and $\Tc$ extend continuously up to the boundary;
\item $D \Tc^{-1}$ extends continuously up to the boundary, except at the points $\Tc(z_i)$ with $\a_i<\pi$ where $D \Tc^{-1}$ behaves like $1/|y-\Tc(z_i)|^{1-\dfrac{\a_i}{\pi}}$;
\item $D \Tc$ extends continuously up to the boundary, except at the points $z_i$ with $\a_i>\pi$ where $D \Tc$ behaves like $1/|x-z_i|^{1-\dfrac{\pi}{\a_i}}$;
\item $D^2 \Tc$ belongs to  $L^p_{\loc}(\overline{\OM})$ for any $p<4/3$.
\end{itemize}
\end{theorem}

\begin{proof} As $\pd \OM$ is $C^{0,\a}$, the Kellogg-Warschawski theorem (Theorem 3.6 in \cite{pomm-2}) states directly that $\Tc$ and $\Tc^{-1}$ is continuous up to the boundary. For the behavior of the derivatives, we use the classical elliptic theory: let
\[ u(x):= \ln | \Tc(x) |.\]
As $\Tc$ is holomorphic, we have that
\[ \D u = 0 \text{ in } \OM \text{ and } u=0 \text{ on } \pd \OM.\]
To localize near each corners, we can introduce a smooth cutoff function $\chi$ supported in a small neighborhood of $z_i$. Therefore, we are exactly in the setting of elliptic studies:
\begin{equation}\label{elliptic}
 \D (u\chi) = f \in C^\infty \text{ in } O_{i} \text{ and } u=0 \text{ on } \pd O_{i},
\end{equation}
where $O_{i}$ is the sector $\{z_i+(r\cos \th, r\sin \th); r>0, \th_i<\th<\th_i+\a_i\}$. The standard idea is to compose by $z^{\pi/\a_i}$ in order to maps the sector on the half plane, where the solution of the elliptic problem $g$ is smooth. Therefore, we have that
\[ u\chi = g \circ z^{\pi/\a_i},\]
which implies that
\begin{equation}\label{approx}
 \na u \approx r^{\pi/\a_i-1} \text{ and } \ \na^{2} u \approx r^{\pi/\a_i-2}. 
\end{equation}

More precisely, we used the so-called shift theorem in non-smooth domain (see the preface of \cite{grisvard}): there exist numbers $c_k$ such that
\[u\chi-\sum c_k v_k \in W^{m+2,p}(O_i\cap B(0,R)), \ \forall R>0 \]
where the $k$ in the summation ranges over all integers such that
\[ \pi/\a_i \leq k\pi/\a_i <m+2-2/p\]
and with
\begin{itemize}
\item $v_k = r^{k\pi/\a_i} \sin(k\pi \th/\a_i)$ if $k\pi/\a_i$ is not an integer;
\item $v_k = r^{k\pi/\a_i} [ \ln r \sin(k\pi \th/\a_i) + \th \cos(k\pi \th/\a_i)]$ if $k\pi/\a_i$ is an integer.
\end{itemize}
In this theorem, $r$ denotes the distance between $x$ and $z_i$: $r:=|x-z_i|$.

We apply it for $m=1$ and $p=2$. As $H^3_{\loc}(\R^2)$ embeds in $C^0$, we see again that $u$ is continuous up to the boundary. 

If $\pi< \a_i \leq 2\pi$ then $1/2 \leq\pi/\a_i < 1$, which gives that $\pi/\a_i$ cannot be an integer. Then, the shift theorem states that $D(u\chi) -\sum c_k Dv_k$ belongs to $H^2_{\loc}(\overline{O_i})$, so it belongs to $C^0$. Thanks to formula of $v_k$, we see that $Du$ is continuous up to the boundary, except near $z_i$ where $Du =  \Oc(r^{\pi/\a_i-1})$. Next, we derive once more to obtain that $D^2 (u\chi) -\sum c_k D^2 v_k$ belongs to $H^1_{\loc}(\overline{O_i})$, so it belongs to $L^p_{\loc}(\overline{O_i})$ for any $p$. As $\sum c_k D^2v_k = \Oc (r^{\pi/\a_i-2})$, with $2-\pi/\a_i<3/2$, then $D^2 u$ belongs to $L^p_{\loc}(\overline{O_i})$ for any $p<4/3$.

The case $\a_i = \pi$ is not interesting because we assume that $z_i$ is a singular point.

If $\pi/2 < \a_i <\pi$, then we note that $\pi/\a_i$ is not an integer and that $k\pi/\a_i <2$ is obtained only for $k=1$. We apply the above argument to see that $u$ and $Du$ is continuous up to the boundary, and $D^2 u$ belongs to $L^p_{\loc}(\overline{O_i})$ for any $p<2$.

Therefore, the shift theorem establishes rigorously that $u = \Oc (r^{\pi/\a_i})$ and $Du = \Oc (r^{\pi/\a_i-1})$ if all the angles are greater than $\pi/2$.
We show now that $Du$ and $D\Tc$ have the same behavior. 

On one hand, differentiating $u$, we have
\[\na u(x)= \frac{\Tc(x)}{|\Tc(x)|^2}D\Tc(x)\]
hence
\begin{equation}\label{ut}
 | \na u(x) |_\infty \leq 4 | D\Tc(x) |_\infty 
\end{equation}
where $| A |_\infty = \max |a_{ij}|$. Indeed, by continuity of $\Tc$, we have that $|\Tc(x)|=\sqrt{\Tc_1^2(x)+\Tc_2(x)^2}\geq 1/2$ near the boundary.

On the other hand,
\[ \frac{\Tc(x)}{|\Tc(x)|^2} = \na u(x) D\Tc(x)^{-1}.\]
By continuity of $\Tc$, there exists a neighborhood of $\pd \OM$ such that $|\Tc(x)|\leq 2$. Then, near the boundary, we have
\[\frac12\leq \frac{1}{|\Tc(x)|}\leq 2 \sqrt{2} |\na u(x)|_\infty |D\Tc(x)^{-1}|_\infty.\]
Moreover, as $\Tc$ is holomorphic, $D\Tc$ is a matrix $2\times 2$ on the form $\begin{pmatrix} a&b\\-b&a \end{pmatrix}$. We deduce from this form that $D\Tc(x)^{-1}=\frac{1}{\det D\Tc(x)} D\Tc(x)^T$. We use that $\det D\Tc(x) = a^2 + b^2 \geq | D\Tc(x) |_\infty^2$ to get
\begin{equation}\label{tu}
 | D\Tc(x) |_\infty \leq 4\sqrt{2} |\na u(x)|_\infty.
\end{equation}
Putting together \eqref{approx}, \eqref{ut} and \eqref{tu}, we can conclude on the behavior of $D\Tc$.

Differentiating once more, we obtain the result for $D^2 \Tc$.

Finally, as  $u = \Oc (r^{\pi/\a_i})$, we state that
\[ | \Tc(x)| = 1 + \Oc (r^{\pi/\a_i}), \  \Tc(x) = \Tc(z_i) + \Oc (|x-z_i|^{\pi/\a_i}), \ \Tc^{-1}(y) = z_i + \Oc (|y- \Tc(z_i)|^{\a_i/\pi}).\]
Next, we use the fact that $D\Tc(x) =  \Oc (|x-z_i|^{\pi/\a_i-1})$ to write
\[ D\Tc^{-1}(y)= \Bigl(D\Tc(\Tc^{-1}(y)) \Bigl)^{-1} = \Oc \Bigl(\frac{1}{(|y- \Tc(z_i)|^{\a_i/\pi})^{\pi/\a_i-1}}\Bigl)= \Oc (|y- \Tc(z_i)|^{\a_i/\pi-1})\]
which ends the proof.
\end{proof}

We refind the result of the exterior of the curve (see \cite{lac_euler}): $\a=2\pi$ gives that $D\Tc$ behaves like $1/\sqrt{|x|}$. In that paper, we found the behavior of $D\Tc$ thanks to the explicit formula of $\Tc$. The Joukowski function $G(z)=\frac12(z+\frac1z)$ maps the exterior of the unit disk to the exterior of the segment $[(-1,0),(1,0)]$. Then, in the case of this segment $\Tc=G^{-1}$ and we can compute that
\[ D\Tc(z) = z \pm \frac{z}{\sqrt{z^2-1}}.\]

We also note that $D\Tc$ near a corner ($\a>\pi$) is less singular than around a cusp (as the intuition).

\begin{remark}\label{DT loc}
This kind of theorem will be useful to remark that the velocity in the exterior of a square blows-up like $1/|x|^{1/3}$ near the corner. However, the only things that we need in the sequel are:
\begin{itemize}
\item  there exists $p_0>2$ such that $\det D\Tc^{-1}$ belongs to $L^{p_0}_{\loc}(\overline{\OM})$: property holding true if all the corners $z_i$ have angles $\a_i$ greater than $\pi/2$ (as in Theorems \ref{main 1}-\ref{main 2});
\item $D\Tc$ belongs to  $L^p_{\loc}(\overline{\OM})$ for any $p<4$ and $D^2 \Tc$ belongs to  $L^p_{\loc}(\overline{\OM})$ for any $p<4/3$.
\end{itemize}
Therefore, Theorems \ref{main 1}-\ref{main 2} can be applied for any  simply connected domain (or exterior of a simply connected set) such that the two previous points hold true. For a sake of clarity, we express the theorems when the boundary is locally a corner at $z_i$, but we can generalize for $\OM$ such that $\pd \OM$ is a $C^{1,1}$ Jordan curve except at a finite number of points $z_i$. In these points, we would define 
\[\a_i := \lim_{s\to 0} \arg (\G'(s_i+s), \G'(s_i-s)) + \pi,\]
where $\G$ is a parametrization of $\pd \OM$ and $z_i=\G(s_i)$. Indeed, up to a smooth change of variable, the Laplace equation in $\OM$ \eqref{elliptic} turns into a divergence form elliptic equation in the exterior of a corner, and we would use results related to elliptic equations in polygons, see \cite{Mazya}.

Actually, we treat any domains $\OM$ which is the interior or the exterior of a simply connected set, such that the solution of 
\[ \D \p = f, \quad \p\vert_{\pd \OM}=0\]
belongs at least to $W^{2,1}$ for $f$ smooth.
Thanks to the works near a corner, we know that it is the case if $\pd \OM$ is smooth except in a finite number of points. A natural question is ``can we assume that the boundary is lipschitz ?''. The answer is no: in \cite{kenig}, the authors study the regularity that we can expect if we just assume that the boundary is lipshitz, and they cannot obtain estimates in $W^{2,1}$. Actually, they construct a counter-exemple: there exists $\OM$ a $C^1$ domain and $f\in C^{\infty}$ for which the second derivative of the solution $\p$ does not belong to $L^1(\OM)$. Then, even for $C^1$, the elliptic properties stated in the previous theorem do not hold. Assuming that the boundary have a finite number of corners, and smooth elsewhere, can be interpreted as assuming that the set of singular points is negligible.
\end{remark}

\begin{remark}
In the previous proof, we have chosen a method based on the shift theorem, which gives an elegant proof for large angle. For an alternative proof, which also holds for small angle, the reader can read \cite[Prop. 2.1]{lac_miot_wang}, where we only use the Kellogg-Warschawski theorem.
\end{remark}

\bigskip

The previous theorem is about the behavior near the obstacle. In the case of an unbounded domain (as in Theorem \ref{main 2}), we will need the following proposition about the behavior of $\Tc$ at infinity.

\begin{proposition} \label{T-inf}
If $\Tc$ is a biholomorphism from $\OM$ to the exterior of the unit disk such that $\Tc(\infty) = \infty$ and $\Tc'(\infty) \in \R^*_+$, then there exist $(\b,\tilde\b)\in \R^+_*\times \C$ and a holomorphic function $h:\OM \to \C$ such that
\[ \Tc(z) = \b z+ \tilde \b + h(z)\]
with
\[ h(z) = \Oc\Bigl(\frac1{|z|}\Bigl) \text{ and } h'(z)=\Oc \Bigl(\frac1{|z|^2}\Bigl),  \text{ as } |z|\to \infty.\]
Moreover, $\Tc^{-1}$ admits a similar development.
\end{proposition}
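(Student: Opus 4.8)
The plan is to obtain the decomposition directly from the Laurent expansion of $\Tc$ at infinity, the only real point being that this pole is \emph{simple}. Since $\Cc$ is compact, $\OM$ contains an exterior annulus $\{|z|>R\}$ on which $\Tc$ is holomorphic and injective, and $\Tc(z)\to\infty$ as $|z|\to\infty$ because $\Tc(\infty)=\infty$.

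First I would move the singularity to the origin by setting $\phi(w):=\Tc(1/w)$, holomorphic and injective on the punctured disk $0<|w|<1/R$ with $\phi(w)\to\infty$ as $w\to 0$. By Casorati--Weierstrass this convergence excludes an essential singularity, so $w=0$ is a pole of some order $m\geq 1$. The hard part---really the whole content beyond bookkeeping---is to show $m=1$. For this I would look at $1/\phi$, which is holomorphic near $0$ with a zero of order exactly $m$ there; a holomorphic germ with a zero of order $m$ is $m$-to-one on a punctured neighborhood, so injectivity of $1/\phi$ (equivalently of $\phi$, hence of $\Tc$) forces $m=1$. Consequently
\[ \phi(w)=\frac{a_{-1}}{w}+a_0+\sum_{k\geq 1}a_k w^k \quad (0<|w|<1/R), \]
and undoing $w=1/z$ gives $\Tc(z)=a_{-1}z+a_0+\sum_{k\geq 1}a_k z^{-k}$ on $\{|z|>R\}$.

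Then I would define $\b:=a_{-1}$, $\tilde\b:=a_0$ and $h:=\Tc-\b z-\tilde\b$. As $\Tc$ is holomorphic on all of $\OM$ and $z\mapsto \b z+\tilde\b$ is entire, $h$ is holomorphic on $\OM$; on $\{|z|>R\}$ it equals $\sum_{k\geq 1}a_k z^{-k}$, whose absolute convergence yields $h(z)=\Oc(1/|z|)$ and, after term-by-term differentiation, $h'(z)=\Oc(1/|z|^2)$. The normalization $\Tc(z)\sim \l z$ with $\l>0$ identifies $\b=a_{-1}=\l\in\R^+_*$, while $\tilde\b=a_0\in\C$ is unconstrained, so $(\b,\tilde\b)\in\R^+_*\times\C$ as required.

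Finally, the very same argument applies to $\Tc^{-1}$, a biholomorphism from the exterior of the unit disk onto $\OM$ fixing infinity with leading coefficient $(\Tc^{-1})'(\infty)=1/\b\in\R^+_*$ (since $\Tc(z)\sim\b z$ forces $\Tc^{-1}(w)\sim w/\b$). This produces $\Tc^{-1}(w)=\b^{-1}w+b_0+\tilde h(w)$ with $b_0\in\C$ and $\tilde h(w)=\Oc(1/|w|)$, $\tilde h'(w)=\Oc(1/|w|^2)$, which is the announced ``similar development''. I expect no obstacle beyond the simplicity-of-pole step; the decay estimates are immediate from the convergence of the Laurent series in the exterior region.
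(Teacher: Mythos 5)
Your proof is correct, and it takes a genuinely different route from the paper. The paper disposes of the statement in two lines by a reduction trick: it sets $E := \Tc^{-1}(B(0,2)\setminus B(0,1)) \cup \Cc$, observes that $E$ is a bounded, simply connected set with \emph{smooth} boundary (the preimage of the circle $\{|w|=2\}$), notes that $H := \Tc/2$ is a biholomorphism between $E^c$ and $B(0,1)^c$, and then invokes Remark 2.5 of \cite{lac_euler}, where this expansion was already established for smooth exterior domains. The point of the construction is precisely to sidestep the lack of regularity of $\pd\OM$ by cutting away a neighborhood of the obstacle. You instead give a self-contained classical argument: pass to $\phi(w)=\Tc(1/w)$, rule out an essential singularity via Casorati--Weierstrass, force the pole to be simple using the local $m$-to-one behavior of a holomorphic germ with a zero of order $m$ against the injectivity of $\Tc$, and read off the decomposition and the decay of $h$ and $h'$ from the Laurent series, which converges absolutely and uniformly on compact subsets of $\{|z|>R\}$. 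Both arguments avoid any boundary regularity hypothesis, since only the behavior at infinity matters. What your approach buys is independence from the earlier paper (no external citation needed) and transparency about why the statement is true; what the paper's approach buys is brevity and reuse of an already-verified computation, including the transfer of the normalization $\Tc'(\infty)\in\R^+_*$ to the coefficient $\b$, which you also handle correctly by identifying $\b=\lim_{z\to\infty}\Tc(z)/z=\l>0$ and $(\Tc^{-1})'(\infty)=1/\b$.
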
 

\begin{proof} We consider $E:= \Tc^{-1}(B(0,2)\setminus B(0,1)) \cup \Cc$, which is an open, bounded, connected, simply connected and smooth subset of the plane. Then, the map $H := \Tc/2$ is a biholomorphism between $E^c$ and $B(0,1)^c$, and we can apply Remark 2.5  of \cite{lac_euler} to end this proof.
\end{proof}

\subsection{Biot-Savart Law} \label{sect : biot}\

One of the keys of the study for two dimensional ideal flow is to work with the vorticity equation, which is a transport equation. For example, in the case of a smooth obstacle, if we have initially $\om_0:=\curl u_0 \in L^1\cap L^\infty$, then $\| \om(t,\cdot)\|_{L^p}= \| \om_0\|_{L^p}$ for all $t,p$.  So, we have some estimates  for the vorticity, and the goal is to establish estimates for the velocity. For that, we introduce the Biot-Savart law, which gives the velocity in terms of the vorticity. Another advantage of the two dimensional space is that we have explicit formula in the exterior of one obstacle, thanks to complex analysis and the identification of $\R^2$ and $\C$.

Let $\OM$ be the exterior (resp. the interior) of a bounded, closed, connected, simply connected subset of the plane, the boundary of which is a Jordan curve. Let $\Tc$ be a biholomorphism from $\OM$ to $(\overline B(0,1))^c$ (resp. $B(0,1)$) such that $\Tc(\infty)=\infty$ (resp. $\Tc(z_0)=0$).

We denote by $G_{\OM}=G_{\OM} (x,y)$ the Green's function, whose the formula is:
\begin{equation*}
G_{\OM}(x,y)=\frac{1}{2\pi}\ln \frac{|\Tc(x)-\Tc(y)|}{|\Tc(x)-\Tc(y)^*||\Tc(y)|}
\end{equation*}
writing $x^*=\frac{x}{|x|^2}$. The Green's function verifies: 
\begin{equation*}
\D_y G_{\OM}(x,y)=\d(y-x) \ \forall x,y\in \OM, \quad G_{\OM}(x,y)=0  \ \forall (x,y)\in \OM \times \pd \OM, \quad G_{\OM}(x,y)=G_{\OM}(y,x)\ \forall x,y\in \OM.
\end{equation*}

The kernel of the Biot-Savart law is $K_{\OM}=K_{\OM}(x,y) := \na_x^\perp G_{\OM}(x,y)$. With $(x_1,x_2)^\perp=\begin{pmatrix} -x_2 \\ x_1\end{pmatrix}$, the explicit formula of $K_{\OM}$ is given by 
\begin{equation*}
K_{\OM}(x,y)=\dfrac{1}{2\pi} D\Tc^T(x)\Bigl(\dfrac{(\Tc(x)-\Tc(y))^\perp}{|\Tc(x)-\Tc(y)|^2}-\dfrac{(\Tc(x)- \Tc(y)^*)^\perp}{|\Tc(x)- \Tc(y)^*|^2}\Bigl)
\end{equation*}
and we introduce the notation
\[K_{\OM}[f]=K_{\OM}[f](x):=\int_{\OM} K_{\OM}(x,y)f(y)dy,\]
with $f\in C_c^\infty({\OM})$.

In unbounded domain, we require information on far-field behavior of $K_{\OM}$. We will use several times the following general relation:
\begin{equation}
\label{frac}
\Bigl| \frac{a}{|a|^2}-\frac{b}{|b|^2}\Bigl|=\frac{|a-b|}{|a||b|},
\end{equation}
which can be easily checked by squaring both sides. Using the behavior of $D\Tc$ at infinity (Proposition \ref{T-inf}), we obtain for large $|x|$ that 
\begin{equation*}
 |K_{\OM}[f]|(x)\leq \dfrac{C_2}{|x|^2},
 \end{equation*}
 where $C_2$ depends on the size of the support of $f$. 

The vector field $u=K_{\OM}[f]$ is a solution of the elliptic system:
\begin{equation*}
\diver u =0 \text{ in } {\OM}, \quad \curl u =f \text{ in } {\OM}, \quad u\cdot \hat{n}=0 \text{ on } \pd \OM, \quad \lim_{|x|\to\infty}|u|=0.
\end{equation*}

If we consider a non-simply connected domain (as in Theorem \ref{main 2}), the previous system has several solutions. To uniquely determine the solution, we have to take into account the circulation.
Let $\hat{n}$ be the unit normal exterior to $\OM$. In what follows all contour integrals are taken in the counter-clockwise sense, so that $\oint_{\pd \Cc} F\cdot \hat \t \, ds=-\oint_{\pd \Cc} F\cdot \hat{n}^\perp ds$. Then the harmonic vector field
\[H_{\OM} (x)=\frac{1}{2\pi}\na^\perp \ln |\Tc(x)|= \frac{1}{2\pi} D\Tc^T(x)\frac{\Tc(x)^\perp}{|\Tc(x)|^2}\]
is the unique\footnote{see e.g. \cite{ift_lop_euler}.} vector field verifying
\begin{equation*}
\diver H_{\OM} = \curl H_{\OM} =0 \text{ in } {\OM}, \quad H_{\OM}\cdot \hat{n}=0 \text{ on } \pd \Cc , \quad H_{\OM} (x)\to 0 \text{ as }|x|\to\infty, \quad \oint_{\pd \Cc} H_{\OM} \cdot \hat \t \, ds =1.
\end{equation*}
\label{1/x}Using Proposition \ref{T-inf}, we see that $H_{\OM}(x)=\mathcal{O}(1/|x|)$ at infinity. Therefore, putting together the previous properties, we obtain the existence part of the following.

\begin{proposition}\label{biot}
Let $\om \in L^{\infty}_c (\OM)$ and $\g\in \R$. If $\OM$ is an open simply connected bounded subset of $\R^2$, then there is a unique solution $u$ of
\begin{equation*}
\left\lbrace\begin{aligned}
\diver u &=0 &\text{ in } {\OM} \\
\curl u &= \om &\text{ in } {\OM} \\
u \cdot \hat{n}&=0 &\text{ on } \pd \OM \\
\end{aligned}\right.
\end{equation*}
which is given by
\begin{equation}\label{biot bd}
u(x) = K_{\OM}[\om](x).
\end{equation}

If $\Cc$ is a closed simply connected bounded subset of $\R^2$ and $\OM = \R^2\setminus \Cc$, then there is a unique solution $u$ of
\begin{equation*}
\left\lbrace\begin{aligned}
\diver u &=0 &\text{ in } {\OM} \\
\curl u &= \om &\text{ in } {\OM} \\
u \cdot \hat{n}&=0 &\text{ on } \pd \Cc \\
u(x)\to 0 &\text{ as }|x|\to\infty\\
\oint_{\pd \Cc} u \cdot \hat \t \, ds &= \g
\end{aligned}\right.
\end{equation*}
which is given by
\begin{equation}\label{biot unbd}
u(x) = K_{\OM}[\om](x) + (\g+\int \om) H_{\OM}(x).
\end{equation}
\end{proposition}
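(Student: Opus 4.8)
The existence of a solution of the announced form has just been obtained by assembling the properties of $K_{\OM}$ and $H_{\OM}$ established above, so the plan is to concentrate on uniqueness. In both cases I would take two solutions $u_1,u_2$ of the same system and set $w:=u_1-u_2$. Then $w$ solves the homogeneous problem $\diver w=0$, $\curl w=0$ in $\OM$ and $w\cdot\hat n=0$ on $\pd\OM$ (resp. on $\pd\Cc$); in the exterior case we moreover have $w(x)\to0$ as $|x|\to\infty$ together with the vanishing circulation $\oint_{\pd\Cc}w\cdot\hat\t\,ds=0$, since $u_1$ and $u_2$ carry the same circulation $\g$. Because $\D w=\na\diver w-\na^\perp\curl w=0$ in the sense of distributions, Weyl's lemma makes $w$ a smooth harmonic field in the interior, so all boundary and far-field conditions can be read classically.

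For the bounded simply connected case I would pass to the stream function. As $\OM$ is simply connected and $\diver w=0$, there is a single-valued $\psi$ with $w=\na^\perp\psi$, whence $\D\psi=\curl w=0$. The tangency condition $w\cdot\hat n=0$ rewrites as $\pd_\t\psi=0$ on $\pd\OM$; since $\pd\OM$ is connected, $\psi$ is constant there, and after subtracting that constant we obtain a harmonic function with zero Dirichlet data on a connected bounded domain. Hence $\psi$ is constant and $w=\na^\perp\psi=0$, i.e. $u_1=u_2$.

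The exterior case is the delicate one, because $\OM$ is no longer simply connected and the circulation must enter. A single-valued stream function $\psi$ with $w=\na^\perp\psi$ still exists: the only obstruction is the flux $\oint_{\pd\Cc}w\cdot\hat n\,ds$, which vanishes by impermeability, so by $\diver w=0$ every flux of $w$ across a curve homotopic to $\pd\Cc$ is zero. Again $\D\psi=0$ and $\psi$ is constant on $\pd\Cc$, normalized to $0$. I would then transport the problem to the exterior of the disk through the biholomorphism: $\tilde\psi:=\psi\circ\Tc^{-1}$ is harmonic on $\overline{B(0,1)}^c$ (harmonicity is conformally invariant in dimension two), vanishes on $\{|y|=1\}$, and satisfies $\na\tilde\psi=\Oc(1/|y|)\to0$ at infinity thanks to the behavior of $\Tc$ in Proposition \ref{T-inf}. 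Expanding $\tilde\psi$ in the harmonic modes $\{1,\ln r,\ r^{\pm n}\cos n\th,\ r^{\pm n}\sin n\th\}$ on $\{r>1\}$, the decay of $\na\tilde\psi$ kills every growing mode, and the vanishing on $r=1$ then forces the constant term and all oscillatory modes to disappear, leaving $\tilde\psi=c\ln r$. Finally the circulation $\oint_{\pd\Cc}w\cdot\hat\t\,ds$ reduces, up to sign, to the flux $\oint_{\pd\Cc}\pd_n\psi\,ds$ of $\na\psi$, which after transport is a nonzero multiple of $c$; the hypothesis that this circulation vanishes gives $c=0$, hence $\tilde\psi\equiv0$, $\psi\equiv0$ and $w=0$, so $u_1=u_2$.

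The main obstacle is precisely this last step: controlling $\tilde\psi$ at infinity and isolating the contribution of the circulation. Everything hinges on the sharp far-field estimate furnished by Proposition \ref{T-inf}, which singles out the logarithmic mode; without the zero-circulation assumption one would only recover that $w$ is a multiple of the harmonic field $H_{\OM}$. An equivalent route would therefore be to invoke the stated uniqueness of $H_{\OM}$ directly, the space of divergence-free, curl-free, tangent and decaying fields being one-dimensional and the circulation being a linear functional that does not vanish on $H_{\OM}$.
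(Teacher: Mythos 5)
Your proposal is correct, and it differs from the paper in an interesting way: the paper does not actually prove uniqueness at all. Its ``proof'' of Proposition \ref{biot} consists of two halves: the existence part is obtained, exactly as you say, by assembling the previously listed properties of $K_{\OM}$ and of the harmonic field $H_{\OM}$; the uniqueness part is handled by a bare citation (``see e.g. \cite[Lemma 2.14]{kiku}, \cite[Proposition 2.1]{ift_lop_euler}''). Your stream-function argument --- $w=\na^\perp\psi$ with $\psi$ harmonic, constant on the boundary, killed by the maximum principle in the bounded case, and by conformal transport to $\{|y|>1\}$ plus a mode expansion in the exterior case, with the circulation isolating the $\ln r$ mode --- is essentially the classical proof contained in those references, so you have supplied the content the paper outsources. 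Your closing remark is in fact the route closest to the paper's own logic: since the paper has already asserted (with a footnote to \cite{ift_lop_euler}) that $H_{\OM}$ is the \emph{unique} divergence-free, curl-free, tangent, decaying field of circulation $1$, the exterior uniqueness follows in one line, because the difference $w$ of two solutions has zero circulation and $H_{\OM}+w$ would otherwise contradict that uniqueness. Two caveats. First, Proposition \ref{T-inf} does not give the rate $\na\tilde\psi=\Oc(1/|y|)$ you claim: from $w\to 0$ and the boundedness of $D\Tc^{-1}$ at infinity you only get $\na\tilde\psi\to 0$; this is harmless, since mere vanishing of the gradient already eliminates every growing mode. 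Second, the proposition is applied in the paper to fields that are only $L^2_{\loc}$ with the tangency condition holding in the weak sense \eqref{imperm}, on boundaries with corners; Weyl's lemma gives interior smoothness only, so the continuity of $\psi$ up to $\pd\OM$ and its constancy there (and hence the evaluation of $\tilde\psi$ on $\{|y|=1\}$) require justification rather than a ``classical reading'' of the boundary conditions --- best handled by computing the circulation on interior curves, as curl-freeness permits, and by working with the weak form of tangency. This gap is inherited, not created, by your argument: the references the paper cites treat smooth domains.
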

Concerning the uniqueness, we can see e.g. \cite[Lem 2.14]{kiku} (see also \cite[Prop 2.1]{ift_lop_euler}).

\medskip

We take advantage of this explicit formula to give estimates on the kernel. We introduce
\[ R[\om](x):=  \int_{\OM} \Bigl(\dfrac{(\Tc(x)-\Tc(y))^\perp}{|\Tc(x)-\Tc(y)|^2}-\dfrac{(\Tc(x)- \Tc(y)^*)^\perp}{|\Tc(x)- \Tc(y)^*|^2}\Bigl) \om(y)\, dy,\]
so that \eqref{biot unbd} reads
\[u(x) =  \frac{1}{2\pi} D\Tc^T(x) \Bigl( R[\om](x)+ (\g+\int \om)\frac{\Tc(x)^\perp}{|\Tc(x)|^2}\Bigl).\]

\begin{proposition}\label{biot est}
Let assume that $\om$ belongs  to $L^1\cap L^\infty(\OM)$. If all the angles of $\OM$ are greater than $\pi/2$, then there exist $(C,a)\in \R^+_* \times (0,1/2]$ depending only on the shape of $\OM$ such that
\[ \|R[\om] \|_{L^\infty(\OM)} \leq C (\| \om \|_{L^1}^{1/2} \| \om \|_{L^\infty}^{1/2} +\| \om \|_{L^1}^{a} \| \om \|_{L^\infty}^{1-a}+ \| \om \|_{L^1}).\]
Moreover, $R[\om]$ is continuous up to the boundary.
\end{proposition}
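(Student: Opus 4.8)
The plan is to push everything onto the reference disc by the conformal change of variables $\xi=\Tc(x)$, $\eta=\Tc(y)$, and then to read the resulting object as a two dimensional Newtonian-type potential. Setting $g:=\om\circ\Tc^{-1}$ and $J:=|\det D\Tc^{-1}|$, the change of variables $y=\Tc^{-1}(\eta)$ gives $dy=J(\eta)\,d\eta$, so that $\|g\|_{L^\infty}=\|\om\|_{L^\infty}$ and $\int |g|\,J\,d\eta=\|\om\|_{L^1}$, while $R[\om]$ becomes an integral of the density $gJ$ against the kernel $\frac{(\xi-\eta)^\perp}{|\xi-\eta|^2}-\frac{(\xi-\eta^*)^\perp}{|\xi-\eta^*|^2}$ over the disc (resp. its exterior). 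Since the modulus of the first term is $\frac1{|\xi-\eta|}$, the whole estimate reduces to controlling $\int \frac{|g|\,J}{|\xi-\eta|}\,d\eta$ together with the analogous image integral. For the image term I would use the one-line reflection identity $|\eta|^2|\xi-\eta^*|^2=|\xi-\eta|^2+(|\xi|^2-1)(|\eta|^2-1)$ (in the spirit of \eqref{frac}): in the interior case $|\eta|\le 1$ forces $\frac1{|\xi-\eta^*|}\le\frac1{|\xi-\eta|}$, so the image term is dominated by the main one; in the exterior case I would split $\{1\le|\eta|\le 2\}$, where $\frac1{|\xi-\eta^*|}\le\frac2{|\xi-\eta|}$ again reduces it to the main kernel, from $\{|\eta|>2\}$, where $|\eta^*|<1/2$ gives $|\xi-\eta^*|\ge 1/2$ and hence a contribution bounded by $2\|\om\|_{L^1}$. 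This last piece produces the term $\|\om\|_{L^1}$ in the statement.

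It thus remains to estimate $\int \frac{|g|\,J}{|\xi-\eta|}\,d\eta$, and here the only difficulty is that, by Theorem \ref{grisvard}, $J$ is unbounded near the images $\Tc(z_i)$ of the corners, so the classical bound cannot use $\|gJ\|_{L^\infty}$. I would therefore localize: fix small discs $U_i$ around each $\Tc(z_i)$ and let $V$ be the rest of the domain. On $V$ the map $D\Tc^{-1}$ is continuous and bounded at infinity, hence $J$ is bounded there; writing $h:=gJ\,\mathbf 1_V$ one gets $\|h\|_{L^\infty}\lesssim\|\om\|_{L^\infty}$ and $\|h\|_{L^1}\le\|\om\|_{L^1}$, and the usual splitting $\int_{B(\xi,\rho)}+\int_{B(\xi,\rho)^c}$ gives $\int\frac{|h|}{|\xi-\eta|}\,d\eta\lesssim \|\om\|_{L^\infty}\rho+\|\om\|_{L^1}\rho^{-1}$; optimizing in $\rho$ yields the term $\|\om\|_{L^1}^{1/2}\|\om\|_{L^\infty}^{1/2}$.

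The near-corner pieces are where the hypothesis on the angles enters. By Remark \ref{DT loc}, the assumption $\a_i>\pi/2$ for every $i$ guarantees $J\in L^{p_0}_{\loc}$ for some $p_0>2$, so $\|J\|_{L^{p_0}(U_i)}<\infty$. On each $U_i$ I would split again at radius $\rho$: on $B(\xi,\rho)$, Hölder with exponents $p_0$ and $p_0'$ gives $\int_{B(\xi,\rho)}\frac{|g|J}{|\xi-\eta|}\,d\eta\le\|\om\|_{L^\infty}\|J\|_{L^{p_0}(U_i)}\bigl(\int_{B(\xi,\rho)}|\xi-\eta|^{-p_0'}\,d\eta\bigr)^{1/p_0'}\lesssim\|\om\|_{L^\infty}\rho^{\,b}$, with $b:=1-2/p_0\in(0,1)$, the last integral converging precisely because $p_0'<2$; off $B(\xi,\rho)$ one keeps the crude bound $\|\om\|_{L^1}\rho^{-1}$. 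Optimizing $\|\om\|_{L^\infty}\rho^{\,b}+\|\om\|_{L^1}\rho^{-1}$ in $\rho$ produces $\|\om\|_{L^1}^{a}\|\om\|_{L^\infty}^{1-a}$ with $a:=b/(b+1)\in(0,1/2]$, exactly the second term. (When the optimal radius exceeds the fixed size of $U_i$, one simply takes $\rho$ equal to that size; this occurs only when $\|\om\|_{L^1}\gtrsim\|\om\|_{L^\infty}$, and the resulting bound is absorbed into the $\|\om\|_{L^1}$ term.) The main obstacle is precisely this trade-off near the corners: since $J$ is merely $L^{p_0}$ and not $L^\infty$, one must abandon the sharp $1/2$--$1/2$ exponents, and the condition $p_0>2$ — equivalently $\a_i>\pi/2$ — is exactly what keeps the exponent $p_0'$ of $1/|\xi-\eta|$ strictly below the critical value $2$.

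Finally, for continuity up to the boundary I would note that the transported density $gJ$ lies in $L^1\cap L^{p_0}_{\loc}$ with $p_0>2$; the Newtonian-type potential of such a density against $1/|\xi-\eta|$ is continuous (indeed Hölder) in $\xi$, uniformly up to $|\xi|=1$, while the image contribution is handled by the inversion $\zeta=\eta^*$, which turns it into a potential over $B(0,1)$ whose only possible singularity lies on the unit circle and whose density is again locally $L^{p_0}$ there. Since $\Tc$ extends continuously up to $\pd\OM$ by Theorem \ref{grisvard}, continuity in $\xi$ transfers to continuity of $R[\om]$ in $x$ up to the boundary.
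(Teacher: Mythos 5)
Your proof is correct, and at its core it is the same argument as the paper's: push the integral to the reference disc, observe that the hypothesis $\a_i>\pi/2$ enters only through the fact (Remark \ref{DT loc}) that $|\det D\Tc^{-1}|$ lies in $L^{p_0}_{\loc}$ for some $p_0>2$, and then run potential estimates interpolating between the $L^1$ and $L^{p_0}$ (resp.\ $L^\infty$) norms of the transported density; your by-hand optimization even reproduces the paper's exact exponent $a=b/(b+1)=\frac{p_0-2}{2(p_0-1)}$, which the paper obtains by citing Lemma \ref{ift}. The execution differs in three places, all legitimate. (i) The paper splits the transported density at the fixed radius $|\y|=2$ ($L^{p_0}$ inside by Theorem \ref{grisvard} and Remark \ref{DT loc}, $L^\infty$ outside by Proposition \ref{T-inf}), while you localize in small discs around the corner images $\Tc(z_i)$; your splitting is finer but plays exactly the same role, and both produce the three-term right-hand side ($1/2$--$1/2$ term from the region where the density is bounded, the $a$--$(1-a)$ term from the $L^{p_0}$ region, and $\|\om\|_{L^1}$ from the far part of the reflected kernel). (ii) For the reflected kernel, the paper performs the inversion $\th=\y^*$ and reapplies the same estimates to the density $f(\th^*)/|\th|^4$ supported in $\tfrac12\le|\th|\le 1$, whereas you dominate the reflected kernel pointwise by the main one via the identity $|\y|^2|\x-\y^*|^2=|\x-\y|^2+(|\x|^2-1)(|\y|^2-1)$; this is slightly more elementary for the uniform bound, though, as you note, you still fall back on the inversion to handle continuity of that piece. (iii) For continuity up to the boundary, the paper does not invoke any regularity theory for Riesz-type potentials: it approximates the density by smooth compactly supported functions and deduces uniform convergence of the potentials from the very $L^1$--$L^{p_0}$ bounds just established, whereas you quote the standard (H\"older) continuity of potentials of $L^1\cap L^{p_0}_{\loc}$ densities with $p_0>2$; both are fine, but the approximation route is self-contained, since it recycles the estimates you already proved.
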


In the case where $\Cc$ is a Jordan arc, the uniform bound is proved in \cite[Lem 4.2]{lac_euler} and the continuity in \cite[Prop 5.7]{lac_euler}. The proof here is almost the same, except that we have to take care that $D\Tc^{-1}$ is not bounded if there is an angle less than $\pi$ (see Theorem \ref{grisvard}). For completeness, we write the details in Section \ref{sect : technical}. In this proof, we can understand why we assume that the angles are greater than $\pi/2$: we need that $\det D \Tc^{-1}$ belongs in $L^{p_0}$ for some $p_0>2$ (see Remark \ref{DT loc}).

\subsection{Existence and properties of weak solutions} \label{sect : exist}\ 

The goal of this subsection is to derive some properties about a weak solution obtained in Theorem \ref{theorem1} from  \cite{GV_lac}. We will also establish similar formulations verified by extensions on the full plane.

{\it a) Weak solution in an unbounded domain.}

We begin by the hardest case: let $\OM:=\R^2\setminus \Cc$, where $\Cc$ is a bounded, simply connected closed set, such that $\pd \Cc$ is a $C^\infty$ Jordan curve, except in a finite number of point $z_1$, $z_2$, ..., $z_n$ where $\pd \OM$ admits corner of angle $\a_i$. Then, there exists some pieces of the boundary which are smooth, implying that the capacity of $\Cc$ is greater than $0$ (see e.g. \cite[Prop 6]{GV_lac}). Therefore, Theorem \ref{theorem1} with our exterior domains is a direct consequence of \cite[Theo 2]{GV_lac}.

\medskip

We know the existence of a global weak solution. We search now some features of such a solution. Let $u_0$ satisfying \eqref{typeinitialdata} and $u$ be a global weak solution of \eqref{Euler}-\eqref{conditions} in the sense of \eqref{imperm2} and \eqref{Eulerweak} such that
$$u \in L^\infty_{\loc}(\R^+; L^2_{\loc} (\Omega)), \:  \om := \curl u \in   L^\infty(\R^+;L^1 \cap L^\infty(\Omega)).$$
As $\om_0:=\curl u_0$ is compactly supported in $\OM$ we note that we can define the initial circulation. Indeed, let $J$ be a smooth closed Jordan curve in $\OM$ such that  $\mathcal{C}$ is included in the bounded component of $\R^2 \setminus J$ and $\supp \om_0$ in the unbounded component. Therefore, we can define the real number
\[ \g_0 : = \oint_{J} u_0 \cdot \hat \t \, ds.\]
Let us remind that  $u_0$ satisfies \eqref{typeinitialdata}, so that it belongs to $W^{1,q}_{\loc}$ for all finite $q$, and so that  the integral at the r.h.s. is well-defined.  Moreover, $\g_0$ does not depend on the curve separating $\mathcal{C}$ and $\supp \om_0$ (thanks to the curl free condition near  $\mathcal{C}$). Passing to the limit, we obtain
\begin{equation}\label{g_0}
 \g_0 = \oint_{\pd \Cc} u_0 \cdot \hat \t \, ds.
\end{equation}

We have proven in the previous subsection that we can reconstruct the velocity in terms of the vorticity and the circulation:
\[u_0(x) = K_{\OM}[\om_0](x) + (\g_0+\int \om_0) H_{\OM}(x).\]

From the definition of weak solution, we know that the quantities $\| \om(t,\cdot) \|_{L^1\cap L^\infty(\OM)}$ and $\int \om(t,\cdot)$ are bounded in $\R^+$. Moreover, we infer that the circulation
\[ \g(t): = \oint_{\pd \Cc} u(t,\cdot) \cdot \hat \t \, ds\]
is bounded locally in time. To show this estimate, first, we note that the previous integral is well defined putting
\[ \g(t): = \oint_{J} u(t,\cdot) \cdot \hat \t \, ds - \int_{A} \om(t,\cdot)  \, dx, \]
with $A=\OM \cap ($bounded connected component of $\R^2 \setminus J)$. Indeed, thanks to the uniqueness part of Proposition \ref{biot} with $\oint_{J} u(t,\cdot) \cdot \hat \t \, ds = g_0$, we state that $u$ can be written as in \eqref{biot unbd}, and we deduce from Theorem \ref{grisvard} and Proposition \ref{biot est} that  $\oint_{\pd \Cc} u(t,\cdot) \cdot \hat \t \, ds$ is well defined.

Next, let $K$ be a compact subset of $\OM$. In this subset, we know by the definition of $\Tc$ and Proposition \ref{biot est} that $K_{\OM}[\om(t,\cdot)](x)$ is uniformly bounded in $\R^+\times K$. Then there exist $C_1, C_2$ such that \eqref{biot unbd} implies
\[ C_1 | \g(t)| \leq \| u(t,\cdot) \|_{L^2(K)} + C_2 + C_1 \| \om(t,\cdot) \|_{L^1(\OM)} ,\]
for any $t\in \R^+$ (we have $C_1=\| H_{\OM} \|_{L^2(K)}$). As $u$ belongs to $L^\infty_{\loc}(\R^+;L^2(K))$ (see the definition of weak solution), then we have that 
\begin{equation}\label{g bd}
\g \in L^{\infty}_{\loc}([0,\infty)).
\end{equation}

Moreover, putting together this estimate of $\g$, Remark \ref{DT loc} and Proposition \ref{biot est}, then \eqref{biot unbd} gives that
\begin{equation}\label{est u}
u \in L^{\infty}_{\loc}([0,\infty);L^p_{\loc}(\overline{\OM})), \ \forall p<4,
\end{equation}
which is an improvement compared to the definition of weak solution, because we control up to the boundary.

Let us derive a formulation verified by $\om$.

First, we note that for any test function $\f\in \Dc([0,\infty)\times \OM ; \R)$, then $\p:= \na^\perp \f$ belongs to the set of admissible test functions, and \eqref{Eulerweak} reads 
\begin{equation} \label{tourEulerweak}
\forall \, \f \in \Dc([0,\infty)\times \OM ; \R), \quad  \int_0^{\infty} \int_\Omega \left( \om \cdot \pd_t \f +   \om u \cdot \na \f \right)  = -\int_\Omega \om_0 \f(0, \cdot).
\end{equation}
Then, $(\om,u)$ verifies the transport equation
\begin{equation}\label{transport*}
 \pd_t \om + u\cdot \na \om =0
\end{equation}
in the sense of distribution \eqref{tourEulerweak} in $\OM$. We need a formulation on $\R^2$. For that, we denote by $\bar \om$ (respectively $\bar u$) the extension of $\om$ (respectively $ u$) to $\R^2$ by zero in $\OM^c$. Let us check that it verifies the transport equation for any test function $\f\in C^\infty_c(\R \times \R^2)$.

\begin{proposition}\label{tour extension}
Let $(\om,u)$ a weak solution to the Euler equations in $\OM$. Then, the pair of extension verifies in the sense of distribution
\begin{equation}
\left\lbrace \begin{aligned}
\label{tour_equa}
&\pd_t \bar\om+\bar u\cdot \na\bar\om=0, & \text{ in }\R^2\times(0,\infty) \\
& \diver\bar u=0 \text{ and }\curl\bar u=\bar\om+g_{\bar\om,\g}(s)\d_{\pd \Cc}, &\text{ in }\R^2\times[0,\infty) \\
& |\bar u|\to 0, &\text{ as }|x|\to \infty \\
& \bar \om(x,0)=\bar\om_0(x), &\text{ in }\R^2.
\end{aligned} \right .
\end{equation}
where $\d_{\pd \Cc}$ is the Dirac function along the curve and with
\begin{equation}\label{g_o_bis}
\begin{split}
g_{\bar\om,\g}(x)=& u\cdot \hat{\t}\\
=&\Bigl[ \lim_{\r\to 0^+}  K_{\OM}[\bar\om](x-\r \hat{n})+(\g+\int\bar \om)H_{\OM}(x-\r \hat{n}) \Bigl]\cdot \overrightarrow{\t}
\end{split}\end{equation}
\end{proposition}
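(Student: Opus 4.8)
The plan is to test each of the four assertions against smooth compactly supported functions on $\R^2$ and to reduce everything to integrals over $\OM$, since $\bar\om$ and $\bar u$ vanish on $\Cc$; the only new contributions come from the boundary $\pd\Cc$, and the entire point is that the no-flux condition $u\cdot\hat n=0$ controls them. Two of the four statements are essentially immediate. The far-field decay holds because, at each fixed time, $u$ is represented by \eqref{biot unbd}, so on $\OM$ one has $\bar u=K_{\OM}[\om]+(\g+\int\om)H_{\OM}$ with $\g(t)$ finite by \eqref{g bd}, and $|\bar u|\to 0$ then follows from \eqref{K-inf} together with $H_{\OM}=\Oc(1/|x|)$. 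The divergence-free extension is built into the definition of a weak solution: taking $\f\in\Dc([0,\infty);C^1_c(\R^2))$ in \eqref{imperm2} and using $\bar u\equiv 0$ on $\Cc$ gives $\int_{\R^+}\int_{\R^2}\bar u\cdot\na\f=0$, i.e. $\diver\bar u=0$ in $\Dc'(\R^2)$.

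For the curl I would fix $\psi\in C^\infty_c(\R^2)$ and compute $\langle\curl\bar u,\psi\rangle=-\int_{\R^2}\bar u\cdot\na^\perp\psi=-\int_{\OM}u\cdot\na^\perp\psi$. Integrating by parts on $\OM$ — after first excising $\d$-balls around the corners and letting $\d\to 0$, which is legitimate because Theorem \ref{grisvard} gives $|u|=\Oc(r^{\pi/\a_i-1})$ near $z_i$ with $\int_0 r^{\pi/\a_i-1}\,dr<\infty$, so the excised contributions are negligible — produces the identity $\langle\curl\bar u,\psi\rangle=\int_{\OM}\om\,\psi+\int_{\pd\Cc}(u\cdot\hat\t)\,\psi\,ds$, where the orientation convention $\hat\t=-\hat n^\perp$ of the paper fixes the sign of the boundary term. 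The first term is $\langle\bar\om,\psi\rangle$; the second is the single-layer distribution $g_{\bar\om,\g}\d_{\pd\Cc}$ with density $g_{\bar\om,\g}=u\cdot\hat\t$, and the limit formula \eqref{g_o_bis} is exactly the statement that this tangential trace is computed from \eqref{biot unbd} by approaching $\pd\Cc$ from inside $\OM$. Proposition \ref{biot est} and Theorem \ref{grisvard} guarantee that $u\cdot\hat\t$ is a well-defined $L^1(\pd\Cc)$ density, so the Dirac term makes sense.

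The transport equation is the delicate part. Since $\diver\bar u=0$ I would work in conservative form and prove $\int_0^\infty\int_{\R^2}(\bar\om\,\pd_t\f+\bar\om\,\bar u\cdot\na\f)=-\int_{\R^2}\bar\om_0\,\f(0,\cdot)$ for every $\f\in\Dc([0,\infty)\times\R^2)$, which simultaneously encodes the evolution on $\R^2\times(0,\infty)$ and the initial datum $\bar\om(\cdot,0)=\bar\om_0$. The interior identity \eqref{tourEulerweak} only admits test functions supported away from $\pd\Cc$, so I would insert a cutoff $\chi_\eps$ vanishing in the $\eps$-tube $T_\eps$ around $\pd\Cc$ and equal to $1$ outside it, apply \eqref{tourEulerweak} to $\f\chi_\eps$, and let $\eps\to 0$. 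Every term converges to the one wanted except the commutator $\int_0^\infty\int_{\OM}\om\,(u\cdot\na\chi_\eps)\,\f$, and the whole proposition reduces to showing this tends to $0$.

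That commutator is the main obstacle. Since $\na\chi_\eps$ is supported in $T_\eps$, is of size $\Oc(1/\eps)$ and points along $\hat n$, one has $u\cdot\na\chi_\eps\approx\tfrac1\eps(u\cdot\hat n)$, so it is precisely the tangency $u\cdot\hat n=0$ on $\pd\Cc$ that must defeat the $1/\eps$ weight. Away from the corners $u$ is $C^1$ up to the boundary, whence $|u\cdot\hat n|=\Oc(\mathrm{dist})=\Oc(\eps)$ on $T_\eps$ and that part of the integral is $\Oc(\eps)\to 0$. Near a corner $z_i$ the velocity blows up like $r^{\pi/\a_i-1}$, so the crude bound $|u\cdot\hat n|\le|u|$ only yields $\Oc(1)$; the resolution is that the \emph{normal} component decays faster than $|u|$, since $u\cdot\hat n$ is the tangential derivative of the (boundary-constant) stream function, and I would quantify this from the explicit factor $u=\tfrac1{2\pi}D\Tc^T(\dots)$ in \eqref{biot unbd} together with the rates of Theorem \ref{grisvard} to obtain $\int_{T_\eps\cap B(z_i,R)}|u\cdot\hat n|=o(\eps)$. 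Establishing this corner estimate is the crux; once it is in hand the commutator vanishes, and passing to the limit $\eps\to 0$ delivers both the transport equation and the stated initial condition, completing the proof.
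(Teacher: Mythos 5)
Your outline reproduces the paper's own strategy---reduce everything to the interior weak formulation \eqref{tourEulerweak} by cutting off near $\pd\Cc$ and let the tangency condition kill the commutator---and the easy points are handled correctly: divergence from \eqref{imperm2}, decay from \eqref{biot unbd} and \eqref{K-inf}, and the curl identity by integration by parts with corner excision (the excised circles contribute $\Oc(\d^{\pi/\a_i})\to 0$, matching the ``classical computation'' the paper cites). But the step you yourself call the crux, namely $\int_{T_\eps\cap B(z_i,R)}|u\cdot\hat n|\,dx=o(\eps)$ near a corner where $|u|\approx r^{\pi/\a_i-1}$ blows up, is asserted rather than proved, and the justification offered (``$u\cdot\hat n$ is the tangential derivative of the boundary-constant stream function, hence decays faster'') is a heuristic with no rate: constancy of the stream function on $\pd\OM$ plus its continuity gives no pointwise bound on a derivative without quantitative interior elliptic estimates, and this is exactly where all the work lies. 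In the paper this cancellation is Lemma \ref{ortho}, whose proof occupies a full subsection of Section \ref{sect : technical}: one passes to disk variables $z=\Tc(x)$, $\y=\Tc(y)$, reduces the kernel algebraically to $\frac{(|z|^2-1)(1-1/|\y|^2)}{|z-\y|^2\,|z-\y^*|^2}\;\y\cdot\frac{z^\perp}{|z|}$ with $\y\cdot z^\perp=(\y-z)\cdot z^\perp$, and exploits the factor $|z|-1\leq 2\e$ through a H\"older splitting that requires $\det D\Tc^{-1}\in L^{p_0}_{\loc}$ for some $p_0>2$ --- which is precisely where the hypothesis $\a_i>\pi/2$ enters. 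Nothing of this sort is present in your sketch, so the only hard statement in the proposition is missing.

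There is also a structural disadvantage in your choice of cutoff. The paper does not use the Euclidean distance tube but $\F^\e(x)=\F\bigl((|\Tc(x)|-1)/\e\bigr)$, which has two properties your $\chi_\eps$ lacks: first, $\na\F^\e\cdot H_{\OM}\equiv 0$ exactly, so the circulation part of \eqref{biot unbd} disappears identically instead of having to be estimated; second, $\na\F^\e$ is everywhere parallel to $\na|\Tc|$, whereas near a corner with $\a_i>\pi$ the Euclidean distance function is not differentiable and $\na\chi_\eps$ is radial (normal to neither edge) in the whole sector facing $z_i$, so the approximation ``$u\cdot\na\chi_\eps\approx\frac1\eps\,u\cdot\hat n$'' on which your reduction rests is not meaningful there. (A smaller issue: away from the corners you invoke $C^1$ regularity of $u$ up to $\pd\OM$, which is not available at this stage since $\om$ need not vanish near the boundary; only continuity up to the boundary, from Proposition \ref{biot est} and Theorem \ref{grisvard}, is known --- though uniform continuity plus tangency does in fact suffice on the smooth part and at corners with $\a_i<\pi$.) None of these objections is fatal to the plan, but as written the proof has a genuine gap exactly at its core estimate.
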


\begin{proof} The third and fourth points are obvious. The second point is a classical computation concerning tangent vector fields: there is no additional term on the divergence, whereas it appears on the $\curl$ the jump of the tangential velocity (see e.g. the proof of Lemma 5.8 in \cite{lac_euler}).

Concerning the first point, we have to consider the case of a test function whose the support meets the boundary. Let $\f \in C^\infty_c(\R\times \R^2)$. We introduce $\F$ a non-decreasing function which is equal to 0 if $s\leq 1$ and to 1 if $s\geq 2$. Let
\[\F^{\e}(x):= \F\Bigl(\frac{|\Tc(x)|-1}{\e}\Bigl).\]
We note that
\begin{itemize}
\item it is a cutoff function of an $\e$-neighborhood of $\Cc$, because $\Tc$ is continuous up to the boundary (see Theorem \ref{grisvard}).
\item we have $\na \F^\e\cdot H_{\OM}\equiv 0$, because $H_{\OM}(x) = \frac{\na^\perp |\Tc(x)|}{|\Tc(x)|}$ (see Subsection \ref{sect : biot}).
\item the Lebesgue measure of the support of $\na \F^\e$ is $o(\sqrt{\e})$. Indeed the support of $\na \F^\e$ is contained in the subset $\{ x\in\OM_\e | 1+\e  \leq |\Tc(x)| \leq 1 + 2\e\}$. The Lebesgue measure can be estimated thanks to Remark \ref{DT loc}:
\[ \int_{ 1+\e  \leq |\Tc(x)| \leq 1 + 2\e}dx=\int_{1+\e  \leq |z| \leq 1 + 2\e} |\det(D\Tc^{-1})|(z) dz \leq \sqrt{\e} \|\det(D\Tc^{-1})\|_{L^2(B(0,1+2\e)\setminus B(0,1))},\]
where the norm in the right hand side term tends to zero as $\e\to 0$ (by the dominated convergence theorem).
\end{itemize}
Another interesting property is the fact that the velocity is tangent to the boundary whereas $\na \F^\e$ is normal. Indeed, we claim the following.
\begin{lemma}\label{ortho}
As $\om$ belongs to  $L^\infty(\R^+; L^1\cap L^\infty(\OM))$ then 
\[ u \cdot \na\F^\e \to 0 \text{ strongly in }L^1(\R^2),\]
uniformly in time, when $\e\to 0$.
\end{lemma}
\noindent
This property is not so obvious, because $|u\cdot \na \F^\e| \approx \frac{|D\Tc|^2}{\e} R[\om] \F'\Bigl(\frac{|\Tc(x)|-1}{\e}\Bigl)$ with $\| \F'\Bigl(\frac{|\Tc(x)|-1}{\e}\Bigl) \|_{L^1} = O(\e)$ (in the case where $D\Tc^{-1}$ is bounded) and $D\Tc$ blowing up.  The perpendicular argument is crucial here and we use the explicit formula \eqref{biot unbd} to show the cancellation effect. This lemma is proved in the case where $\Cc$ is a Jordan arc in \cite[Lem 4.6]{lac_euler}. For a sake of completeness, we give the general proof in Section \ref{sect : technical}.

As $\F^\e \f$ belongs to $C^{\infty}_c(\R\times \OM)$ for any $\e>0$, we can write that $(\om,u)$ is a weak solution in $\OM$:
\[ \int_0^\infty\int_{\R^2}(\F^\e\f)_t\om \, dxdt +\int_0^\infty \int_{\R^2}\na(\F^\e\f)\cdot u\om \, dxdt+\int_{\R^2}(\F^\e\f)(0,x)\om_0(x) \,dx=0.\]
As $\om\in L^\infty(L^1\cap L^\infty)$, it is obvious that the first and the third integrals converge to
\[ \int_0^\infty\int_{\R^2}\f_t \bar \om \,dxdt \text{ and } \int_{\R^2}\f(0,x)\bar \om_0(x) \, dx\]
as $\e\to 0$.
Concerning the second integral, we have
\[\int_0^\infty \int_{\R^2}\na(\F^\e\f)\cdot u\om \, dxdt = \int_0^\infty \int_{\R^2}\f (\na \F^\e \cdot u) \om \, dxdt + \int_0^\infty \int_{\R^2}\F^\e \na \f \cdot u\om \, dxdt.\]
The first right hand side term tends to zero because $\na \F^\e \cdot u \to 0 $ in $L^1(\R^2)$ and $\om\in L^\infty(\R^+\times \R^2)$. The second right hand side term converges to
\[\int_0^\infty \int_{\R^2} \na \f \cdot \bar u\bar \om \, dxdt\]
because $u$ belongs to $L^2(\supp \f \cap (\R^+_*\times \overline{\OM}))$ (see \eqref{est u}). Putting together these limits, we obtain that:
\[ \int_0^\infty\int_{\R^2}\f_t\bar\om \, dxdt +\int_0^\infty \int_{\R^2}\na\f\cdot \bar u\bar\om \, dxdt+\int_{\R^2}\f(0,x)\bar\om_0(x)\, dx=0,\]
which ends the proof.
\end{proof}

\medskip

The goal of the following is to prove that the $L^p$ norm, the total mass of the vorticity and the circulation are conserved quantities.

In a domain with smooth boundaries, the pair $(\om,u)$ is a strong solution of the transport equation, and the conservation of the previous quantities is classical. The main point here is to remark that this pair in our case is a renormalized solution in the sense of DiPerna and Lions (see \cite{dip-li}) of the transport equation. We consider equation \eqref{transport*} as a linear transport equation with given velocity field ${u}$. Our purpose here is to show that if $\bar\om$ solves this linear equation, then so does $\beta(\bar\om)$ for a suitable smooth function $\b$. This follows from the theory developed in \cite{dip-li} where they need that the velocity field belongs to $L_{\loc}^1\left(\R^+,W_{\loc}^{1,1}(\RR)\right) \cap L^1_{\loc} \left(\R^+,L^1(\R^2)+ L^\infty(\R^2)\right)$ and that $\diver u$ is bounded. Let us check that we are in this setting.

\begin{lemma}\label{W11}
Let $(\om,u)$ be a global weak solution in $\OM$. Then we have that
\[ u \in L^\infty_{\loc} \left(\R^+,W_{\loc}^{1,p}(\overline{\OM})\right)\cap L^\infty_{\loc} \left(\R^+,L^1(\overline{\OM})+ L^\infty(\overline{\OM})\right),\]
for any $p\in [1,4/3)$.
\end{lemma}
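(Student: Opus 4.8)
The plan is to read everything off the explicit Biot--Savart representation $u = K_{\OM}[\om] + c\,H_{\OM}$ with $c:=\g+\int\om$, and to treat the behaviour near the corners and at infinity separately. First I would note that $c$ is bounded on every $[0,T]$ by \eqref{g bd} together with the bounded total mass $\int\om$, so that all the time dependence below is automatically uniform on compact time intervals, which is what the $L^\infty_{\loc}(\R^+,\cdot)$ norms require.

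For the decomposition $\bar u\in L^1(\R^2)+L^\infty(\R^2)$ I would cut $\R^2$ with a large ball $B:=B(0,R)$ containing $\Cc$ and $\supp\om(t,\cdot)$ for $t\le T$. On $B$ the improved interior estimate \eqref{est u} gives $u\in L^p(B\cap\OM)$ for some $p\in(2,4)$, hence $\bar u\,\1_B\in L^1(\R^2)$; on $B^c$ the decays $|K_{\OM}[\om]|=\Oc(1/|x|^2)$ from \eqref{K-inf} and $H_{\OM}=\Oc(1/|x|)$ show $\bar u\,\1_{B^c}\in L^\infty(\R^2)$. Both bounds are uniform on $[0,T]$, which settles this membership.

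For $\bar u\in W^{1,1}_{\loc}(\R^2)$ the divergence is harmless ($\diver\bar u=0$, the field being tangent), so everything reduces to the local integrability of $\na u$ up to $\pd\Cc$. Differentiating $u=\tfrac1{2\pi}D\Tc^T(R[\om]+c\,\Tc^\perp/|\Tc|^2)$ yields three kinds of terms: (i) $D^2\Tc$ times a bounded factor (using that $R[\om]$ is bounded by Proposition \ref{biot est} and $|\Tc|\ge1/2$ near the boundary); (ii) $D\Tc$ times $\na(\Tc^\perp/|\Tc|^2)=\Oc(|D\Tc|)$; and (iii) $D\Tc$ times $\na R[\om]$. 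By Theorem \ref{grisvard}, term (i) is $\Oc(|D^2\Tc|)$, which lies in $L^p_{\loc}(\overline{\OM})$ for $p<4/3$, and term (ii) is $\Oc(|D\Tc|^2)\in L^p_{\loc}$ for $p<2$; both are in $L^1_{\loc}(\overline{\OM})$. Here $W^{1,1}_{\loc}$ is the point that is local in space, so I would not attempt any integrability of $\na u$ at infinity.

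The hard part will be term (iii). Writing $\na R[\om](x)=D\Tc^T(x)\,S[\om](x)$, where after the substitution $\x=\Tc(y)$ the quantity $S[\om]$ is a matrix Calder\'on--Zygmund integral (kernel $\sim1/|\Tc(x)-\x|^2$) applied to $\tilde\om:=(\om\circ\Tc^{-1})|\det D\Tc^{-1}|$, I would invoke the angle hypothesis through Remark \ref{DT loc}: since all $\a_i>\pi/2$ we have $\det D\Tc^{-1}\in L^{p_0}_{\loc}$ for some $p_0>2$, whence $\tilde\om\in L^{p_0}$ and $S[\om]\in L^{p_0}_{\loc}$ by Calder\'on--Zygmund. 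Term (iii) is then $\Oc(|D\Tc|^2|S[\om]|)$, and taking $q<2$ conjugate to $p_0$ (possible exactly because $p_0>2$) Hölder's inequality places it in $L^1_{\loc}(\overline{\OM})$; this is the step where $\a_i>\pi/2$ is indispensable, since without $p_0>2$ the pairing of $|S[\om]|$ with $|D\Tc|^2$ (which is only in $L^q_{\loc}$ for $q<2$) would break down. I would finally flag one subtlety to check: the zero extension carries a tangential jump $u\cdot\hat\t$ on $\pd\Cc$, so that $\na\bar u$ a priori contains a singular term $\sim(u\cdot\hat\t)\,\d_{\pd\Cc}$ (matching the Dirac in $\curl\bar u$ of Proposition \ref{tour extension}); I would either argue that this divergence-free tangential shear is admissible in the renormalization framework of \cite{dip-li}, or remove it by noting that $\bar\om$ will be shown to vanish near $\pd\Cc$, so that the transport never feels it.
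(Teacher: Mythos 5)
Your overall route is the same as the paper's: write $u=\tfrac1{2\pi}D\Tc^T\,(h\circ\Tc)$ with $h$ a Biot--Savart-type integral in the image plane, differentiate, and treat the $D^2\Tc$ term and the $|D\Tc|^2$ terms separately. Your $L^1+L^\infty$ splitting and your terms (i)--(ii) are correct and essentially what the paper does. The genuine gap is in term (iii), and it sits exactly at the point the lemma turns on. Calder\'on--Zygmund gives an $L^{p_0}$ bound on $Dh$ \emph{with respect to the image variable} $dz$; what your H\"older step uses is that $S[\om]=Dh\circ\Tc$ lies in $L^{p_0}_{\loc}$ \emph{with respect to} $dx$, and these are not the same statement: transferring the bound costs the Jacobian $|\det D\Tc^{-1}|$, which is unbounded at the images of the corners with $\a_i<\pi$. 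Concretely, near such a corner $\tilde\om\sim|z-\Tc(z_i)|^{2(\a_i/\pi-1)}$, so generically $Dh$ has the same power behaviour, and $Dh\circ\Tc\sim|x-z_i|^{2-2\pi/\a_i}$, which belongs to $L^r_{\loc}(dx)$ only for $r<\a_i/(\pi-\a_i)$. For $\a_i\in(\pi/2,2\pi/3]$ this threshold is $\leq 2$, so $S[\om]$ is not even locally square integrable in $x$, and no H\"older pairing against $|D\Tc|^2\in L^q_{\loc}$, $q<2$, can yield $L^1_{\loc}$; your exponent bookkeeping closes only when one may take $p_0>3$, i.e.\ when all angles exceed $2\pi/3$, whereas the lemma must cover every angle greater than $\pi/2$. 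What saves the product is a cancellation that H\"older cannot see: at those corners $|D\Tc|^2$ vanishes at exactly the rate at which $Dh\circ\Tc$ blows up. The paper captures this with the pointwise identity $|D\Tc|_\infty^2\leq a^2+b^2=|\det D\Tc|$ (valid because $\Tc$ is holomorphic), so that
\[
\int_K |D\Tc|^2\,|Dh\circ\Tc|\,dx\;\leq\;\int_K |Dh\circ\Tc|\,|\det D\Tc|\,dx\;=\;\int_{\Tc(K)}|Dh(z)|\,dz,
\]
finite by the Calder\'on--Zygmund estimate in the image plane; no integrability of the composition in the $x$-variable is ever asserted. You should replace your H\"older step by this change-of-variables argument; note that this step by itself only requires $Dh\in L^1_{\loc}(dz)$, the strength $p_0>2$ being consumed elsewhere (e.g.\ in the $L^\infty$ bound of Proposition \ref{biot est} used for the $D^2\Tc$ term).

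Two further remarks. Your closing flag is legitimate: since $u\cdot\hat\t$ does not vanish on $\pd\OM$ in general, the distributional gradient of the zero extension $\bar u$ carries a singular part on $\pd\OM$ (the counterpart of the Dirac mass in $\curl\bar u$ of Proposition \ref{tour extension}), so across the boundary $\bar u$ is only of bounded variation; the paper's proof, which estimates $Du$ up to the boundary in $\overline{\OM}$ only, is silent on this point. However, your second proposed repair is circular: the vanishing of $\om$ in a neighborhood of $\pd\OM$ (Proposition \ref{constant_vorticity_2}) is itself proved \emph{via} the renormalization property that the present lemma is meant to justify, so it cannot be invoked here.
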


We use the explicit form of the velocity \eqref{biot unbd}: $u(x)=DT(x) f(T(x))$, where $f$ looks like the Biot-Savart operator in $\R^2$. Therefore, Lemma \ref{W11} follows from the fact that $DT$ belongs to $W^{1,p}_{\loc}(\overline{\OM})$ for any $p<4/3$ (see Theorem \ref{grisvard}), and thanks to Proposition \ref{biot est} and the Calderon-Zygmund inequality. The proof is written in \cite{lac_small} in the case where $\Cc$ is a Jordan arc. We generalize it in Section \ref{sect : technical}.

In Proposition \ref{tour extension}, we have proved that $\bar \om$ verifies the transport equation with velocity $\bar u$, but actually it verifies the transport equation with any extension $\tilde u$ of $u$ (indeed, $\bar \om\equiv 0$ outside $\OM$). Let us introduce a relevant extension of $u$ in order to apply the renormalized theory. We fix $p\in (1,4/3)$ and $R>0$ such that $\partial \OM \subset B(0,R)$. We readily check that $\widetilde\OM:= \OM\cap B(0,R+1)$ verifies the {\it Uniform Cone Condition} (see \cite[Par. 4.8]{Adams} for the precise definition). Therefore by Theorem \cite[Theo. 5.28]{Adams} for any $p\in (1,4/3)$ there exists a {\it simple} (2,p)-extension operator $E(p)$ from $W^{2,p}(\widetilde\OM)$ to $W^{2,p}(\R^2)$, namely there exists $K(p)>0$ such that for any $v \in W^{2,p}(\widetilde\OM)$ we have
\[
E(p)v=v \text{ a.e. in } \widetilde\OM, \quad \| E(p)v \|_{W^{2,p}(\R^2)} \leq K(p) \| v \|_{W^{2,p}(\widetilde\OM)}.
\]
Let us consider the stream function $\psi$ of $u$, namely the function verifying:
\[
u=\nabla^\perp \psi \text{ in } \OM, \quad \psi =0 \text{ on } \partial \Omega.
\]
By the Poincar\'e inequality, we have that
\begin{equation*}
\psi \in L^\infty_{\loc}(\R_+,W^{2,p}(\widetilde\OM)).  
\end{equation*}
Then we define $\chi$ a smooth cutoff function such that $\chi \equiv 1$ on $B(0,R)$ and $\chi \equiv 0$ on $B(0,R+1)$, and we put
\[
\tilde \psi :=  E(p)(\chi\psi),\quad \tilde u\vert_{B(0,R)} = \nabla^{\perp} \tilde \psi, \quad  \tilde u\vert_{B(0,R)^c} = u.
\]
Obviously, we have:
$$\tilde u =u \text{ a.e. in }\Omega, \quad \diver \tilde u=0 \text{ on }\R^2,$$
and 
\[\tilde u \in L^\infty_{\loc} \left(\R^+,W^{1,p}(B(0,R))\right),\]
hence
\[\tilde u \in L^\infty_{\loc} \left(\R^+,W_{\loc}^{1,1}(\RR)\right)\cap L^\infty_{\loc} \left(\R^+,L^1(\R^2)+ L^\infty(\R^2)\right).\]

Therefore, \cite{dip-li} implies that $\overline{\om}$ is a renormalized solution.

\begin{lemma}
 \label{renorm1} For $\bar u$ fixed.
Let $\overline{\om}$ be a solution of the linear equation \eqref{transport*} in $\R^2$ with velocity $\tilde u$ and initial datum $\bar\om_0$. Let $\beta:\R \rightarrow \R$ be a smooth function such that
\begin{equation*}
|\beta'(t)|\leq C(1+ |t|^p),\qquad \forall t\in \R,
\end{equation*}
for some $p\geq 0$. Then $\beta(\bar\om)$ is a solution of \eqref{transport*} in $\R^2$ (in the sense of distribution) with velocity $\tilde u$ and initial datum $\beta(\bar\om_0)$.
\end{lemma}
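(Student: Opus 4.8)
The statement to prove is Lemma~\ref{renorm1}: the renormalization property for the linear transport equation in $\R^2$ with the fixed velocity field $\bar u$.

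The plan is to invoke directly the DiPerna--Lions theory from \cite{dip-li}, for which the essential hypotheses are precisely the regularity and integrability conditions on the velocity field and the bound on its divergence. The main structural point is therefore not a new calculation but a verification that the setting of \cite{dip-li} applies to our $\bar u$. First I would recall that, by Lemma~\ref{W11}, we have established
\[
\bar u \in L^\infty_{\loc}\left(\R^+, W^{1,1}_{\loc}(\RR)\right)\cap L^\infty_{\loc}\left(\R^+, L^1(\R^2)+L^\infty(\R^2)\right),
\]
which gives the two integrability and local Sobolev requirements. Second I would observe that $\diver \bar u = 0$ in the sense of distributions (this is the second line of system \eqref{tour_equa} in Proposition~\ref{tour extension}, once one notes that the singular term $g_{\bar\om,\g}(s)\d_{\pd\Cc}$ affects only the curl and not the divergence); in particular $\diver\bar u$ is bounded, indeed identically zero. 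With these two facts the hypotheses of the commutator lemma of \cite{dip-li} are met.

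The heart of the DiPerna--Lions argument, which I would sketch rather than reproduce, is the commutator estimate: mollifying \eqref{transport*} by $\bar\om_\delta := \bar\om * \rho_\delta$ yields
\[
\pd_t \bar\om_\delta + \bar u\cdot\na\bar\om_\delta = r_\delta,
\qquad r_\delta := \bar u\cdot\na\bar\om_\delta - \left(\bar u\cdot\na\bar\om\right)*\rho_\delta,
\]
and the key lemma of \cite{dip-li} shows $r_\delta \to 0$ strongly in $L^1_{\loc}$ as $\delta\to 0$, using exactly the $W^{1,1}_{\loc}$ regularity of $\bar u$. For the smooth regularized solution one then has the chain rule
\[
\pd_t \beta(\bar\om_\delta) + \bar u\cdot\na\beta(\bar\om_\delta) = \beta'(\bar\om_\delta)\, r_\delta,
\]
and the growth condition $|\beta'(t)|\leq C(1+|t|^p)$ together with the fact that $\bar\om \in L^\infty(\R^+;L^1\cap L^\infty)$ (so that $\beta'(\bar\om_\delta)$ stays bounded in $L^\infty$ uniformly in $\delta$) guarantees that $\beta'(\bar\om_\delta)\,r_\delta \to 0$ in $L^1_{\loc}$. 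Passing to the limit $\delta\to 0$ in the weak formulation, and using $\diver\bar u=0$ to move the transport term into divergence form $\diver(\bar u\,\beta(\bar\om))$, yields that $\beta(\bar\om)$ solves \eqref{transport*} in the sense of distributions with initial datum $\beta(\bar\om_0)$.

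The step I expect to be the genuine obstacle is ensuring that the hypotheses of \cite{dip-li} are satisfied globally enough to justify the passage to the limit and the identification of the initial datum: the velocity $\bar u$ is only locally integrable and locally $W^{1,1}$ in space, and one must check that the combination with $\bar\om\in L^\infty(\R^+;L^1\cap L^\infty)$ controls all the boundary and far-field contributions so that the product $\bar u\,\beta(\bar\om)$ is locally integrable and the commutator vanishes uniformly in time. This is exactly where the earlier analysis feeds in, since the $W^{1,1}_{\loc}$ bound for $\bar u$ in Lemma~\ref{W11} itself required the angle condition $\a_i>\pi/2$ (through Theorem~\ref{grisvard} and Proposition~\ref{biot est}); once Lemma~\ref{W11} is in hand, the renormalization is then a direct citation of \cite{dip-li}, with the verification of hypotheses being the only real content.
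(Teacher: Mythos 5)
Your proposal is correct and follows essentially the same route as the paper: the paper's proof of Lemma~\ref{renorm1} consists precisely of verifying the DiPerna--Lions hypotheses --- the regularity and integrability of $\bar u$ via Lemma~\ref{W11} and the fact that $\diver \bar u=0$ from Proposition~\ref{tour extension} --- and then citing \cite{dip-li} directly. Your additional sketch of the commutator estimate is a faithful summary of the cited theory rather than a different argument, so the two proofs coincide in substance.
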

We recall that  $\bar \om$ denotes the extension of $\om$ by zero in $\Cc$, and the previous lemma means that, for any $\F\in C^\infty_c([0,\infty)\times \R^2)$ and $\beta$ such that $\beta(0)=0$, we have 
\begin{equation}\label{renorm}
\frac{d}{dt} \int_{\R^2} \beta(\om)\F(t,x)\, dx=\int_{\R^2} \beta(\om) (\pd_t \F +u\cdot \na \F)\,dx
\end{equation}
in the sense of distributions on $\R^+$. Now, we write a remark from \cite{lac_miot} in order to establish some desired properties for $\om$. 

\begin{remark}
\label{remark : conserv} (1) Since the right-hand side in \eqref{renorm} belongs to $L_{\loc}^1(\R^+)$, the equality holds in $L_{\loc}^1(\R^+)$. With this sense, \eqref{renorm} actually still holds when $\F$ is smooth, bounded and has bounded first derivatives in time and space. In this case, we have to consider smooth functions $\beta$ which in addition satisfy $\beta(0)=0$, so that $\beta(\om)$ is integrable. This may be proved by approximating $\F$ by smooth and compactly supported functions $\F_n$ for which \eqref{renorm} applies, and by letting then $n$ go to $+\infty$.\\
(2) We apply the point (1) for $\beta(t)=t$ and $\F\equiv 1$, which gives
\begin{equation}\label{om-est-1}
 \int_{\OM} \om(t,x)\, dx =   \int_{\OM} \om_0(x)\, dx \text{ for all }t>0.
 \end{equation} 
(3) We let $1\leq p<+\infty$. Approximating $\beta(t)=|t|^p$ by smooth functions and choosing $\F\equiv 1$ in \eqref{renorm}, we deduce that for a solution $\om$ to \eqref{transport*}, the maps $t\mapsto \|\om(t)\|_{L^p(\OM)}$ are continuous and constant. In particular, we have
\begin{equation}\label{om-est-2}
 \|\om(t)\|_{L^1(\OM)}+\|\om(t)\|_{L^\infty(\OM)}\equiv \|\om_0\|_{L^1(\OM)}+\|\om_0\|_{L^\infty(\OM)}.
\end{equation}
\end{remark}

In the case of unbounded domain, we will require that $\om$ stays compactly supported. Specifying our choice for $\F$ in \eqref{renorm}, we are led to the following.

\begin{proposition}\label{compact_vorticity} 
Let $\om$ be a weak solution of \eqref{transport*} such that
\begin{equation*}
\om_0  \text{ is compactly supported in } B(0,R_0)
\end{equation*}
for some positive $R_0$. For any $\T^*$ fixed, then there exists $C>0$ such that
\begin{equation*}
\om(t,\cdot)  \text{ is compactly supported in } B(0,R_0+Ct),
\end{equation*}
for any $t\in [0,\T^*]$.
\end{proposition}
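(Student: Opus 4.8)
The plan is to run the classical finite-speed-of-propagation energy argument, using the renormalization identity \eqref{renorm} with a carefully chosen pair $(\beta,\F)$. Without loss of generality I would enlarge $R_0$ so that $\Cc\subset B(0,R_0)$; then the whole outer region $\{|x|\ge R_0\}$ is away from the corners, where $D\Tc$ is continuous and bounded (Theorem \ref{grisvard}), $|\Tc|\ge 1$, and $H_{\OM}=\Oc(1/|x|)$ is bounded.

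First I would establish a uniform velocity bound on this outer region. Writing $u$ via \eqref{biot unbd}, I would bound $K_{\OM}[\om]$ by $\|R[\om]\|_{L^\infty}$ (controlled through Proposition \ref{biot est} by $\|\om\|_{L^1\cap L^\infty}$, which is conserved by \eqref{om-est-2}) times $\|D\Tc\|_{L^\infty(\{|x|\ge R_0\})}$, and I would bound the harmonic part by $|(\g+\int\om)\,H_{\OM}|\le C$, where $\int\om=\int\om_0$ is conserved \eqref{om-est-1} and $\g\in L^\infty_{\loc}$ by \eqref{g bd}. This produces a finite constant
$$M:=\sup_{t\in[0,\T^*]}\ \sup_{|x|\ge R_0}|u(t,x)|<\infty,$$
and I would take the growth speed to be $C:=M$. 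Note that $u$ is not globally bounded — it blows up near the corners — but these lie inside $B(0,R_0)$ and will never meet the cutoff below.

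Next I would take $\beta(t):=t^2$ (smooth, nonnegative, with $\beta(0)=0$ and $|\beta'(t)|\le C(1+|t|)$) and, for a parameter $\delta>0$, a smooth nondecreasing profile $\chi:\R\to[0,1]$ with $\chi\equiv 0$ on $(-\infty,0]$ and $\chi\equiv 1$ on $[\delta,\infty)$, and set
$$\F(t,x):=\chi\bigl(|x|-R_0-Ct\bigr).$$
Since $R_0>0$, $\F$ vanishes near the origin, hence is smooth, bounded, with bounded first derivatives, so the extended identity of Remark \ref{remark : conserv}(1) applies. A direct computation gives
$$\pd_t\F+u\cdot\na\F=\chi'\bigl(|x|-R_0-Ct\bigr)\Bigl(-C+u\cdot\tfrac{x}{|x|}\Bigr),$$
and on the support of $\chi'$ one has $|x|\ge R_0+Ct\ge R_0$, so $u\cdot\frac{x}{|x|}\le|u|\le M=C$; as $\chi'\ge 0$ the whole expression is $\le 0$. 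Setting $m(t):=\int_{\R^2}\beta(\bar\om)\F\,dx\ge 0$, \eqref{renorm} gives $m'(t)\le 0$, while $m(0)=0$ since $\F(0,\cdot)$ vanishes on $B(0,R_0)\supset\supp\bar\om_0$. Hence $m\equiv 0$ on $[0,\T^*]$; as $\beta(\bar\om)\F\ge 0$, it vanishes a.e., so $\bar\om(t,\cdot)=0$ wherever $\F=1$, i.e. on $\{|x|>R_0+Ct+\delta\}$. Letting $\delta\to 0$ (the speed $C$ being independent of $\delta$) yields $\supp\om(t,\cdot)\subset\overline{B(0,R_0+Ct)}$.

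The main obstacle is the velocity bound $M<\infty$: everything hinges on controlling $u$ uniformly on $\{|x|\ge R_0\}$ over $[0,\T^*]$ despite the singular, unbounded-domain structure (the corner blow-up of $D\Tc$ and the harmonic field $H_{\OM}$ carrying the time-dependent circulation $\g(t)$). The resolution is precisely that the corners and $\Cc$ are confined to $B(0,R_0)$, so only the conserved quantities $\|\om\|_{L^1\cap L^\infty}$ and $\int\om$, together with the locally bounded $\g(t)$, enter the estimate, and Proposition \ref{biot est} converts these into the desired $L^\infty$ control.
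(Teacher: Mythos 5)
Your proof is correct and follows essentially the same route as the paper's: the renormalized identity with $\beta(t)=t^2$, tested against a radial cutoff whose front moves at a speed $C$ given by a uniform bound on $|u|$ over $\{|x|\ge R_0\}$, that bound being obtained exactly as in the paper from \eqref{biot unbd}, Proposition \ref{biot est}, Proposition \ref{T-inf}, the conserved quantities \eqref{om-est-1}--\eqref{om-est-2} and the local bound \eqref{g bd} on the circulation. The only cosmetic differences are your additive front $\chi(|x|-R_0-Ct)$ versus the paper's rescaled profile $\F_0(|x|/R(t))$ with $R(t)=R_0+Ct$ (your version even gives the slightly sharper radius), and that the paper spells out why the integral identity holds for \emph{every} $T$ rather than almost every $T$ (via $\bar\om\in C(\R^+,L^2(\R^2))$), a point you use implicitly when concluding $m\equiv 0$ on all of $[0,\T^*]$.
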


The main computation of this proof can be found in \cite{lac_miot} or in \cite{lac_small}. For a sake of self-containedness we write the details in Section \ref{sect : technical}.

\medskip

Therefore, for $\T^*$ fixed, there exists $R_1$ such that the support of the vorticity is included in $B(0,R_1)$ for all $t\in [0,\T^*]$. It implies that $u$ is harmonic in $B(0,R_1)^c$ ($\diver u = \curl u =0$), and \eqref{Euler} is verified in the strong way on this set. With strong solution, the Kelvin's circulation theorem can be used, which states that the circulation at infinity is conserved:
\[ \g(t)+\int \om(t,\cdot) = \g^\infty(t) \equiv \g^\infty_0 = \g_0 + \int \om_0.\]
Using the conservation of the total mass \eqref{om-est-1}, we obtain that the circulation of the velocity around the obstacle is conserved:
\begin{equation}\label{gamma cons}
 \g(t) \equiv \g_0,\ \forall t \in [0,\T^*].
\end{equation}

{\it b) Weak solution in a bounded domain.}

The previous part can be adapted easily to the bounded case. In simply connected domain, we do not consider the circulation:
\[ u_0(x) = K_{\OM}[\om_0].\]
As Proposition \ref{tour extension} is about the behavior near the boundary, we can check that we obtain exactly the same.

\begin{proposition}\label{tour extension bd}
Let $(\om,u)$ a weak solution to the Euler equations in $\OM$ bounded. Then, the pair of extension verifies in the sense of distribution
\begin{equation}
\left\lbrace \begin{aligned}
\label{tour_equa_bis bd}
&\pd_t \bar\om+\bar u\cdot \na\bar\om=0, & \text{ in }\R^2\times(0,\infty) \\
& \diver\bar u=0 \text{ and }\curl\bar u=\bar\om+g_{\bar\om}(s)\d_{\pd \OM}, &\text{ in }\R^2\times[0,\infty) \\
& \bar \om(x,0)=\bar\om_0(x), &\text{ in }\R^2.
\end{aligned} \right .
\end{equation}
where $\d_{\pd \OM}$ is the Dirac function along the curve and $g_{\bar\om}$ is :
\begin{equation}\label{g_o_bis bd}
\begin{split}
g_{\bar\om}(x)=& -u\cdot \hat{\t}\\
=&-\Bigl[ \lim_{\r\to 0^+}  K_{\OM}[\bar\om](x - \r \hat{n}) \Bigl]\cdot \hat{\t}
\end{split}\end{equation}
\end{proposition}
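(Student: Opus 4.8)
The plan is to follow the proof of Proposition~\ref{tour extension} almost verbatim, since that argument is purely local near the boundary and the bounded case is strictly simpler: there is no harmonic contribution $(\g+\int\bar\om)H_{\OM}$ and no far-field condition, so the Biot--Savart law reduces to $u=K_{\OM}[\om]$ and the system has only three lines instead of four.

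The initial-condition line and the divergence/curl line require only minor attention. The former is immediate from the definition of the extension $\bar\om$. For the latter I would run the same classical jump computation for a field that is divergence- and curl-free on either side of the Jordan curve $\pd\OM$: integrating $\diver\bar u$ and $\curl\bar u$ by parts separately on $\OM$ and on $\OM^c$, the continuity of the normal component (which equals $0$) produces no singular part in the divergence, while the jump of the tangential component across $\pd\OM$ yields the measure $g_{\bar\om}\d_{\pd\OM}$ in the curl. The one genuine difference with the exterior case is orientation: here $\OM$ lies on the inner side of $\pd\OM$ and $\hat{n}$ is the outward normal of the bounded domain, which flips the sign and gives $g_{\bar\om}=-u\cdot\hat{\t}=-\bigl[\lim_{\r\to0^+}K_{\OM}[\bar\om](x-\r\hat{n})\bigr]\cdot\hat{\t}$; tracking this sign is the only place that demands care.

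For the transport equation satisfied by the extensions I would reuse the cutoff scheme. As $\Tc$ now maps $\OM$ onto $B(0,1)$, I set $\F^\e(x):=\F\bigl((1-|\Tc(x)|)/\e\bigr)$, a cutoff of an $\e$-neighborhood of $\pd\OM$; by Theorem~\ref{grisvard} $\Tc$ is continuous up to the boundary, so $\F^\e$ is well defined and $\supp\na\F^\e\subset\{1-2\e\le|\Tc(x)|\le1-\e\}$, whose Lebesgue measure is $o(\sqrt\e)$ by the same change of variables using $\det D\Tc^{-1}\in L^2_{\loc}$ (Remark~\ref{DT loc}, valid precisely because all angles exceed $\pi/2$). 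The key cancellation $u\cdot\na\F^\e\to0$ strongly in $L^1(\R^2)$ is exactly Lemma~\ref{ortho}, and is in fact even easier to obtain here since $u=K_{\OM}[\om]$ has no harmonic part, so the perpendicularity with $H_{\OM}$ is not even needed. Inserting $\F^\e\f$ into the vorticity weak formulation analogous to \eqref{tourEulerweak} (legitimate since $\F^\e\f\in C^\infty_c(\R\times\OM)$), writing $\na(\F^\e\f)\cdot u\om=\f\,(\na\F^\e\cdot u)\,\om+\F^\e\,\na\f\cdot u\om$, and passing to the limit $\e\to0$ using $\om\in L^\infty(\R^+\times\R^2)$ together with the $L^2_{\loc}$ control of $u$ up to the boundary (the bounded analogue of \eqref{est u}), I recover the transport equation for $\bar\om$ tested against an arbitrary $\f\in C^\infty_c(\R\times\R^2)$.

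Since every tool used in the exterior proof---the cutoff, the $o(\sqrt\e)$ measure bound, Lemma~\ref{ortho}, and the control of $u$ up to the boundary---carries over unchanged (and in part trivializes) in the bounded setting, I do not anticipate any substantive obstacle; the proof is strictly easier than its exterior counterpart, and the only subtle point is the orientation bookkeeping leading to the minus sign in $g_{\bar\om}$.
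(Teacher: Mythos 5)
Your proposal is correct and follows exactly the route the paper takes: the paper's own treatment of Proposition \ref{tour extension bd} consists precisely of the remark that the proof of Proposition \ref{tour extension} adapts to the bounded case (dropping the harmonic part and the far-field condition), which is what you carry out. Your explicit adaptations --- the cutoff $\F^\e(x)=\F\bigl((1-|\Tc(x)|)/\e\bigr)$ with support of $\na\F^\e$ in $\{1-2\e\le|\Tc(x)|\le 1-\e\}$, the $o(\sqrt{\e})$ measure bound via $\det D\Tc^{-1}\in L^2_{\loc}$, the bounded-case version of Lemma \ref{ortho}, and the orientation-induced minus sign in $g_{\bar\om}$ --- are all the correct details behind the paper's ``we can check that we obtain exactly the same.''
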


Moreover, we have a term less compared of the unbounded case, then we can also check that 
\[ u \in L^\infty_{\loc} \left(\R^+,W^{1,p}({\OM})\right)\cap L^\infty_{\loc} \left(\R^+,L^1({\OM})\right),\]
for any $p\in [1,4/3)$. As in the unbounded case, we fix $p\in (1,4/3)$, $R>0$ such that $\partial \OM \subset B(0,R)$, and $\chi$ a smooth cutoff function such that $\chi \equiv 1$ on $B(0,R)$ and $\chi \equiv 0$ on $B(0,R+1)$. We also consider the stream function $\psi$ of $u$:
\[
u=\nabla^\perp \psi \text{ in } \OM, \quad \psi =0 \text{ on } \partial \Omega.
\]
which verifies by the Poincar\'e inequality
\begin{equation*}
\psi \in L^\infty_{\loc} (\R_+,W^{2,p}(\OM)).  
\end{equation*}
We put
\[
\tilde \psi := \chi E(p)\psi,\quad \tilde u = \nabla^{\perp} \tilde \psi.
\]
Obviously, we have:
$$\tilde u =u \text{ a.e. in }\Omega, \quad \diver \tilde u=0 \text{ on }\R^2.$$
As $\tilde u$ is compactly supported in $B(0,R+1)$ we infer that
\begin{equation*}
\tilde u \in L^\infty_{\loc} (\R_+;W^{1,p}(\R^2)),
\end{equation*}
hence
\[\tilde u \in L^\infty_{\loc} \left(\R^+,W_{\loc}^{1,1}(\RR)\right)\cap L^\infty_{\loc} \left(\R^+,L^1(\R^2)\right).\]

Therefore, $\overline{\om}$ is a renormalized solution and that 
\begin{equation}\label{om-est-1 bd}
 \int_{\OM} \om(t,x)\, dx =   \int_{\OM} \om_0(x)\, dx \text{ for all }t>0
 \end{equation} 
and
\begin{equation}\label{om-est-2 bd}
 \|\om(t)\|_{L^1(\OM)}+\|\om(t)\|_{L^\infty(\OM)}\equiv \|\om_0\|_{L^1(\OM)}+\|\om_0\|_{L^\infty(\OM)}.
\end{equation}

\section{Liapounov method}\label{sect : 3}

In this section, we present the proof for a Lagrangian solution. When the velocity $u$ is smooth, it
gives rise to a flow $\phi_x(t)$ defined by
\begin{equation}
\label{i2}
\begin{cases}
\frac{d}{dt} \phi_x(t)=u\big(t,\phi_x(t)\big) \\
\phi_x(0)=x \in \RR.
\end{cases}
\end{equation}
In view of \eqref{transport}, we then have
\begin{equation}
\label{i3} \frac{d}{dt} \om\big(t,\phi_x(t)\big)\equiv 0,
\end{equation}
which gives that $\om$ is constant along the characteristics. We assume here that these trajectories exist and are differentiable in our case, and we prove by the Liapounov method that the support of the vorticity never meets the boundary $\pd\OM$. Although we do not know that the flow is smooth, the following computation is the main idea of this article, and it will be rigourously applied in Section \ref{sect : 4}.

The Liapounov method to prove this kind of result is used by Marchioro and Pulvirenti in \cite{mar_pul} in the case of a point vortex which moves under the influence of the regular part of the vorticity, and by Marchioro in \cite{mar} when the dirac is fixed. In both articles, the authors use the explicit formula of the velocity associated to the dirac centered at $z(t)$: $H(x)=(x-z)^\perp/(2\pi |x-z|^2)$. The geometrical structure is the key of their analysis. Indeed, choosing $L(t) = - \ln |\phi_x(t)-z(t) |$ they have that
\begin{enumerate}
\item[a)] $L(t) \to \infty$ if and only if the trajectory meets the dirac. Then, it is sufficient to prove that $L'(t)$ stays bounded in order to prove the result.
\item[b)] $H( \phi_x(t)) \cdot( \phi_x(t)-z(t)) \equiv 0$, which implies that the singular term in the velocity does not appear.
\end{enumerate}

Therefore, the explicit blow up in the case of the dirac point is crucial in two points of view: for the symmetry cancelation (point b) and for the fact that the primitive of $1/x$ is $\ln x$ which blows up near the origin (point a). In our case, we do not have such an explicit form of the blow up near the corners and the primitive of $1/\sqrt{x}$ is $\sqrt{x}$ which is bounded near $0$. The idea is to add a logarithm. When $\Cc$ is a Jordan arc, $|\Tc| \approx 1+\sqrt{z^2-1}$ and we note that $\ln \ln (1+ \sqrt{z^2 -1})$ blows up near the end-points $\pm 1$.

However, the problem with Liapounov function is that it is very specific on the case studied. For example, this function is different if the dirac point is fixed or if it moves with the fluid (for more details and explanations, see the discussion on Liapounov functions in Section \ref{sect : 6}).

\bigskip

We fix $x_0\in \OM$ and we consider $\phi=\phi_{x_0}(t)$ the trajectory which comes from $x_0$ (see \eqref{i2}). We denote
\[ L(t):= -\ln |L_1(t, \phi(t)) |\]
with $L_1$ depending on the geometric property of $\OM$:\label{assump}
\begin{enumerate}
\item if $\Omega$ is a bounded, simply connected open set, such that $\pd \OM$ has a finite number of corner with angles greater than $\pi/2$ (as in Theorem \ref{main 1}), then we choose
\begin{equation}\label{L1 1}
 L_1(t,x):= \frac1{2\pi} \int_{\OM} \ln\Bigl( \frac{|\Tc(x)-\Tc(y)|}{|\Tc(x)-\Tc(y)^*||\Tc(y)|} \Bigl)\om(t,y) \, dy;
\end{equation} 
\item if $\OM:=\R^2\setminus \Cc$, where $\Cc$ is a compact, simply connected set, such that $\pd \OM$ has a finite number of corner with angles greater than $\pi/2$ (as in Theorem \ref{main 2}), then we choose
\begin{equation}\label{L1}
 L_1(t,x):= \frac1{2\pi} \int_{\OM} \ln\Bigl( \frac{|\Tc(x)-\Tc(y)|}{|\Tc(x)-\Tc(y)^*||\Tc(y)|} \Bigl)\om(t,y) \, dy+ \frac{\a}{2\pi}\ln |\Tc(x)|,
\end{equation} 
where $\a:= \g_0+\int \om_0$.
\end{enumerate}

When trajectories exist, it is obvious (without renormalization) that \eqref{i2} and \eqref{i3} imply that
\begin{equation}\label{norm om}
\| \om(t,\cdot)\|_{L^p} =\| \om_0\|_{L^p} \text{ and } \int_{\OM} \om(t,\cdot) = \int_{\OM} \om_0,\ \forall t>0, \forall p\in[1,\infty].
\end{equation}
We assume that $\om_0$ is compactly supported, then included in $B(0,R_0)$ for some $R_0>0$. Thanks to Propositions \ref{T-inf} and \ref{biot est}, we see that the velocity $u$ is bounded outside this ball by a constant $C_0$, and  \eqref{i2} and \eqref{i3} give
\begin{equation}\label{support}
\supp \om(t,\cdot) \subset B(0,R_0+C_0 t),\ \forall t\geq 0.
\end{equation}
We also have that the circulation is conserved.

If we assume that $\om_0$ is non positive, then it follows from \eqref{i3} that
\begin{equation}\label{signe}
 \om(t,x) \leq 0,\ \forall t\geq 0,\ \forall x \in \OM.
\end{equation}

\subsection{Blow up of the Liapounov function near the boundary.}\ 

The first required property is that $L$ goes to infinity iff the trajectory meets the boundary. Next, if we prove that $L$ is bounded, then it will follow that the trajectory stays far away the boundary. 
We fix $\mathbf{T}^*>0$, using \eqref{support} we denote by $R_{\mathbf{T}^*}:= R_0+C_0 {\mathbf{T}^*}$, such that $\supp \om(t,\cdot)\subset B(0,R_{\mathbf{T}^*})$ for all $t\in [0,{\mathbf{T}^*}]$. 

\begin{lemma}\label{L1 maj} 
For any case (1)-(2), there exists $C_1=C_1({\mathbf{T}^*},\om_0,\g_0)$ such that
\[ |L_1(t,x)| \leq C_1||\Tc(x)|-1|^{1/2}, \ \forall x\in B(0,R_{\mathbf{T}^*}), \ \forall t\in [0,{\mathbf{T}^*}].\]
\end{lemma}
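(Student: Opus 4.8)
The plan is to pass to the conformal coordinate $\eta=\Tc(x)$ and to recognise $L_1$ as (the image of) the stream function of $u$, which vanishes on the unit circle $|\eta|=1$; the $1/2$-power will then come from integrating the velocity along a radial segment and applying Cauchy--Schwarz. First I would observe, using the Green's function \eqref{green}, that in both cases (1)--(2) the function $\psi(\eta):=L_1(t,\Tc^{-1}(\eta))$ is precisely the stream function in the disk picture: it is continuous up to $|\eta|=1$ (by Theorem \ref{grisvard} and the continuity of $R[\om]$ in Proposition \ref{biot est}), and it vanishes there because, for $|\eta|=1$, one has $|\eta-\zeta^*|\,|\zeta|=|\eta-\zeta|$ and $\ln|\eta|=0$, so each term in \eqref{L1 1}--\eqref{L1} is zero.

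Next I would compute $\na_\eta^\perp\psi$. Since $\Tc$ is holomorphic, $D\Tc^T$ commutes with the rotation $v\mapsto v^\perp$, so the Biot--Savart formula \eqref{biot unbd} together with $u=\na_x^\perp L_1$ gives
\[ \na_\eta^\perp\psi(\eta)=V(\eta):=\frac1{2\pi}\Bigl(R[\om]\bigl(\Tc^{-1}(\eta)\bigr)+\a\,\frac{\eta^\perp}{|\eta|^2}\Bigr)\]
(with the harmonic term absent in case (1), where $\a=0$). The decisive point is that $V$ is \emph{bounded}: the factor $\eta^\perp/|\eta|^2$ is bounded on $|\eta|\ge1$, while $\|R[\om(t,\cdot)]\|_{L^\infty}$ is controlled by Proposition \ref{biot est}; by the conservation of $\|\om(t,\cdot)\|_{L^1\cap L^\infty}$ in \eqref{norm om}, this bound is uniform in $t\in[0,\mathbf{T}^*]$. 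Hence $\|V\|_{L^\infty}\le M$ with $M=M(\mathbf{T}^*,\om_0,\g_0)$. Note that all the blow-up of the physical velocity near the corners is carried by the factor $D\Tc^T$ in \eqref{biot unbd}, not by $V$, which is exactly why $\na_\eta\psi$ stays bounded.

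Then, writing $\eta=\rho e^{i\th}$ and using $\psi=0$ on $\rho=1$, the fundamental theorem of calculus along the radial segment gives
\[ \psi(\rho e^{i\th})=\int_1^{\rho}\pd_r\psi(re^{i\th})\,dr,\qquad |\pd_r\psi|\le|\na_\eta\psi|=|V|\le M.\]
Because $x\in B(0,R_{\mathbf{T}^*})$ is sent by the continuous map $\Tc$ into a bounded region, the radial path from the unit circle to $\eta$ has length $\le\ell_{\mathbf{T}^*}$ for some constant $\ell_{\mathbf{T}^*}$, and Cauchy--Schwarz yields
\[ |\psi(\rho e^{i\th})|\le |\rho-1|^{1/2}\Bigl(\int_1^{\rho}|\pd_r\psi|^2\,dr\Bigr)^{1/2}\le M\,\ell_{\mathbf{T}^*}^{1/2}\,\bigl||\Tc(x)|-1\bigr|^{1/2},\]
which is the claim with $C_1:=M\,\ell_{\mathbf{T}^*}^{1/2}$. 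Case (1) is identical, integrating inward from $|\eta|=1$, with $\bigl||\Tc(x)|-1\bigr|=1-|\eta|$ and no harmonic term.

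The main obstacle — and the crux of this whole part — is the uniform boundedness of $V=\na_\eta\psi$: this is exactly where the hypothesis that every corner angle exceeds $\pi/2$ enters, through the $L^{p_0}$ ($p_0>2$) integrability of $\det D\Tc^{-1}$ underlying Proposition \ref{biot est} (see Remark \ref{DT loc}). Once $V$ is bounded, the $1/2$-power is a soft consequence of Cauchy--Schwarz; in fact one could even obtain the sharper exponent $1$ near the boundary, but the stated $1/2$ already suffices to make $L=-\ln|L_1|$ blow up as the trajectory approaches $\pd\OM$, which is all that is needed in the sequel.
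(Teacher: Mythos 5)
Your route is genuinely different from the paper's, and most of it is sound: you pass to the disk variable $\eta=\Tc(x)$, identify $\psi(\eta)=L_1(t,\Tc^{-1}(\eta))$ as the stream function whose gradient $V=\na_\eta\psi$ is exactly the \emph{bounded} part of the Biot--Savart law (your commutation of $D\Tc^T$ with the rotation is correct, and Proposition \ref{biot est} together with \eqref{norm om} does give $\|V\|_{L^\infty}\le M(\mathbf{T}^*,\om_0,\g_0)$), and then you integrate radially from the unit circle. The paper proceeds quite differently: it never uses the vanishing of the stream function on the boundary, but instead dominates the Green kernel pointwise, via the identity $\frac{|\Tc(x)-\Tc(y)|^2}{|\Tc(x)-\Tc(y)^*|^2|\Tc(y)|^2}=1-\frac{(|\Tc(x)|^2-1)(|\Tc(y)|^2-1)}{|\Tc(x)-\Tc(y)^*|^2|\Tc(y)|^2}$ and the elementary inequality $-\ln(1-r)\le\bigl(r/(1-r)\bigr)^{1/2}$, which bounds the logarithm by $\tfrac12\sqrt{(|\Tc(x)|^2-1)(|\Tc(y)|^2-1)}\,/\,|\Tc(x)-\Tc(y)|$; the $1/2$-power and the application of Proposition \ref{biot est} then come out in one stroke. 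Your argument, if completed, would even give the stronger exponent $1$, as you observe.

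However, there is a genuine gap at the assertion that $\psi$ is continuous up to $|\eta|=1$ \emph{with boundary value $0$}. From Theorem \ref{grisvard} and Proposition \ref{biot est} you may deduce that $V$ is bounded on the open region, hence $\psi$ is Lipschitz there and admits \emph{some} continuous extension to the closure; but you have not shown that this extension's trace on the circle coincides with the value obtained by plugging $|\eta|=1$ into the integral formula (which is indeed $0$ pointwise). That identification requires passing the limit $|\eta|\to 1$ inside the integral of the Green kernel against $f(t,\zeta)=\om(t,\Tc^{-1}(\zeta))\,|\det D\Tc^{-1}(\zeta)|$, and this is not automatic precisely because, at this stage of the paper, nothing prevents $\supp\om(t,\cdot)$ from touching $\pd\OM$ --- ruling that out is the very purpose of the Liapounov section --- so the kernel's singularity at $\zeta=\eta$ sits inside the region where $f$ lives and a domination or uniform-integrability argument is needed. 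The gap is fixable without collapsing into the paper's proof: for instance, split the kernel into the two logarithms $\ln|\eta-\zeta|$ and $\ln\bigl(|\eta-\zeta^*|\,|\zeta|\bigr)$, note that each is, up to an additive constant in $\eta$, a Newtonian potential of an $L^1\cap L^{p_0}$ density ($p_0>2$ by Remark \ref{DT loc}), hence continuous up to the circle, and that the two potentials cancel exactly on $|\eta|=1$. Alternatively, invoke the paper's dominating inequality --- but then your proof reduces to the paper's. As written, the sentence ``it vanishes there because, for $|\eta|=1$, \dots'' treats the crux of the lemma as if it were immediate, and it is not.
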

\begin{proof} 
For a sake of shortness, we write the proof in the hardest case: case (2). The other case follows easily. Recalling the notation $z^*=z/|z|^2$, we can compute
\begin{equation*}\begin{split}
\frac{|\Tc(x)-\Tc(y)|^2}{|\Tc(x)-\Tc(y)^*|^2|\Tc(y)|^2} &= 1-\frac{ |\Tc(x)-\Tc(y)^*|^2|\Tc(y)|^2-|\Tc(x)-\Tc(y)|^2}{ |\Tc(x)-\Tc(y)^*|^2|\Tc(y)|^2}\\
= 1-&\frac{( |\Tc(x)|^2 |\Tc(y)|^2 -2 \Tc(x)\cdot \Tc(y) +1)- (|\Tc(x)|^2 -2\Tc(x)\cdot \Tc(y) + |\Tc(y)|^2)}{ |\Tc(x)-\Tc(y)^*|^2|\Tc(y)|^2}\\
&= 1- \frac{ (|\Tc(x)|^2-1)(|\Tc(y)|^2-1)}{|\Tc(x)-\Tc(y)^*|^2|\Tc(y)|^2}.
\end{split}\end{equation*}
Therefore, we have
\begin{eqnarray*}
\ln\Bigl( \frac{|\Tc(x)-\Tc(y)|}{|\Tc(x)-\Tc(y)^*||\Tc(y)|}\Bigl) &=& \frac12 \ln\Bigl( \frac{|\Tc(x)-\Tc(y)|^2}{|\Tc(x)-\Tc(y)^*|^2|\Tc(y)|^2}\Bigl) \\
&=& \frac12 \ln\Bigl(1- \frac{ (|\Tc(x)|^2-1)(|\Tc(y)|^2-1)}{|\Tc(x)-\Tc(y)^*|^2|\Tc(y)|^2}\Bigl),
\end{eqnarray*}
and we need an estimate of $\ln(1-r)$ when $r\in (0,1)$, because we recall that $|\Tc(z)| > 1$ for any $z\in \OM$. It is easy to see (studying the difference of the functions) that
\[ |\ln(1-r)| = - \ln(1-r)\leq  \Bigl(\frac{r}{1-r}\Bigl)^{1/2},\ \forall r\in [0,1).\]
Applying this inequality, we have for any $y\neq x$ that 
\begin{eqnarray*}
\Bigl| \ln\Bigl( \frac{|\Tc(x)-\Tc(y)|}{|\Tc(x)-\Tc(y)^*||\Tc(y)|}\Bigl)\Bigl| &\leq & \frac{1}2  \Bigl(\frac{\frac{ (|\Tc(x)|^2-1)(|\Tc(y)|^2-1)}{|\Tc(x)-\Tc(y)^*|^2|\Tc(y)|^2}}{\frac{|\Tc(x)-\Tc(y)|^2}{|\Tc(x)-\Tc(y)^*|^2|\Tc(y)|^2} }\Bigl)^{1/2}\\
&\leq& \frac{1}2\frac{ \sqrt{(|\Tc(x)|^2-1)(|\Tc(y)|^2-1)}}{|\Tc(x)-\Tc(y)|}.
\end{eqnarray*}
By continuity of $\Tc$, we denote by $C_{\mathbf{T}^*}$ a constant such that $\Tc(B(0,R_{\mathbf{T}^*}))\subset B(0,C_{\mathbf{T}^*})$. Finally, we apply the previous inequality to $L_1$ and we find for all $x\in B(0,R_{\mathbf{T}^*})$ and $t\in [0,{\mathbf{T}^*}]$:
\begin{eqnarray*}
 |L_1(t,x)| & \leq & \frac{C_{\mathbf{T}^*}(C_{\mathbf{T}^*}+1)^{1/2}}{4\pi}(|\Tc(x)|-1)^{1/2} \int_{\OM} \frac{|\om(y)|}{|\Tc(x)-\Tc(y)|} \, dy + \frac{|\a|}{2\pi}\ln |\Tc(x)|\\
 &\leq & \frac{\sqrt{2}C_{\mathbf{T}^*}^{3/2}}{4\pi}(|\Tc(x)|-1)^{1/2} C (\| \om \|_{L^1}^{1/2} \| \om \|_{L^\infty}^{1/2} + \| \om \|_{L^1}^{a} \| \om \|_{L^\infty}^{1-a}) + \frac{|\a|}{2\pi} ( |\Tc(x)| -1).
\end{eqnarray*}
For the last inequality, we used a part of Proposition \ref{biot est}. As $( |\Tc(x)| -1)\leq C_{\mathbf{T}^*}^{1/2} ( |\Tc(x)| -1)^{1/2}$, the conclusion follows from \eqref{norm om}.
\end{proof}

Concerning the lower bound for the case (1)-(2), we need some conditions on the sign.

\begin{lemma} \label{L1 est}
If $\om_0$ is non-positive and $\g_0\geq - \int \om_0 $ (only in case (2)), then  there exists $C_2=C_2({\mathbf{T}^*},\om_0)$ such that
\[ L_1(t,x) \geq C_2||\Tc(x)|-1|, \ \forall x\in B(0,R_{\mathbf{T}^*}), \ \forall t\in [0,{\mathbf{T}^*}].\]
\end{lemma}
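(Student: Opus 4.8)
The plan is to establish the lower bound $L_1(t,x) \geq C_2(|\Tc(x)|-1)$ by exploiting the sign structure of the vorticity together with the explicit Green's function representation. Recall from the proof of Lemma \ref{L1 maj} the factorization
\[ \ln\Bigl( \frac{|\Tc(x)-\Tc(y)|}{|\Tc(x)-\Tc(y)^*||\Tc(y)|}\Bigl) = \frac12 \ln\Bigl(1- \frac{ (|\Tc(x)|^2-1)(|\Tc(y)|^2-1)}{|\Tc(x)-\Tc(y)^*|^2|\Tc(y)|^2}\Bigl). \]
The crucial sign observation is that for $x,y \in \OM$ we have $|\Tc(x)|>1$ and $|\Tc(y)|>1$, so the fraction being subtracted is strictly positive and lies in $(0,1)$; hence the argument of the logarithm is in $(0,1)$ and the Green's-function kernel is \emph{negative}. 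Since $\om(t,y)\leq 0$ by \eqref{signe}, the product of the (negative) kernel with the (non-positive) vorticity is non-negative, so the integral term in \eqref{L1} is non-negative pointwise. This is the heart of why the sign condition $\om_0 \leq 0$ is assumed.

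Next I would handle the harmonic term $\frac{\a}{2\pi}\ln|\Tc(x)|$ appearing only in case (2). Since $|\Tc(x)|>1$ gives $\ln|\Tc(x)| \geq 0$, and the hypothesis $\g_0 \geq -\int\om_0$ together with $\a = \g_0 + \int \om_0$ (and the conservation \eqref{norm om}, noting $\int\om_0 = \int\om(t,\cdot)$) forces $\a \geq 0$, this term is also non-negative. Thus $L_1(t,x)\geq 0$, but I need the \emph{quantitative} lower bound proportional to $|\Tc(x)|-1$. To extract this, I would isolate the contribution of a fixed portion of the support of $\om$: since $\om_0 \leq 0$ is not identically zero (otherwise the statement is trivial with $C_2=0$), there is a set of positive measure where $\om(t,\cdot)$ is bounded away from zero, and on a compact region away from $x$ near the boundary the kernel is comparable to $-\tfrac12 \frac{(|\Tc(x)|^2-1)(|\Tc(y)|^2-1)}{|\Tc(x)-\Tc(y)^*|^2|\Tc(y)|^2}$ by the elementary bound $-\ln(1-r)\geq r$. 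Using $|\Tc(x)|^2 - 1 \geq |\Tc(x)|-1$ and the uniform boundedness of $|\Tc(x)-\Tc(y)^*|$ and $|\Tc(y)|$ over $y$ in the support (which lies in $B(0,R_{\mathbf{T}^*})$, mapped into $B(0,C_{\mathbf{T}^*})$) produces a linear-in-$(|\Tc(x)|-1)$ lower bound with constant depending on $\mathbf{T}^*$ and $\om_0$.

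The main subtlety I anticipate is the interplay between the point $x$ sitting near the boundary and the possibility that $y$ ranges over the whole support, including near $x$ itself. Near $y=x$ the kernel blows up, but \emph{favorably}: $-\ln(1-r)\to +\infty$ is positive, so the singular part only helps the lower bound rather than hurting it, and one may simply discard it (integrate over a sub-region bounded away from $x$). Since, by the arguments of Section \ref{sect : 3} and the conservation laws, $\supp\om(t,\cdot)$ together with $x\in B(0,R_{\mathbf{T}^*})$ keeps all relevant quantities uniformly controlled on $[0,\mathbf{T}^*]$, the constant $C_2$ can be chosen uniformly in time. Putting together the non-negativity of both terms with the quantitative lower bound from the selected sub-region yields the claimed estimate $L_1(t,x)\geq C_2(|\Tc(x)|-1)$ for all $x\in B(0,R_{\mathbf{T}^*})$ and $t\in[0,\mathbf{T}^*]$.
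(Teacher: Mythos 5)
Your sign analysis (negative Green's kernel times non-positive vorticity, plus $\a=\g_0+\int\om_0\geq 0$ making the harmonic term non-negative) and your use of the elementary bound $-\ln(1-r)\geq r$ are exactly the ingredients of the paper's proof. But there is a genuine gap at the one step that carries all the quantitative content: you need a \emph{fixed} amount of vorticity mass to lie, \emph{uniformly in $t\in[0,\mathbf{T}^*]$}, in a region where $|\Tc(y)|-1$ is bounded below by a positive constant, because the pointwise kernel lower bound is proportional to $(|\Tc(x)|^2-1)(|\Tc(y)|^2-1)$ and degenerates as $y$ approaches $\pd\OM$. Your justification --- ``there is a set of positive measure where $\om(t,\cdot)$ is bounded away from zero'' --- holds for each fixed $t$, but that set depends on $t$ and nothing prevents it from lying entirely inside a thin neighborhood of the boundary at some time. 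You cannot rule this out here: whether the vorticity approaches the boundary is precisely what this lemma is being used to disprove later on, so assuming the mass stays in a ``compact region away from the boundary'' is circular. As stated, your constant $C_2$ is not uniform in $t$, and the argument collapses exactly where the lemma needs to do work.

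The paper closes this gap with a pigeonhole argument that never tracks where the vorticity is. Set $r_1=\|\om_0\|_{L^1}$, $r_\infty=\|\om_0\|_{L^\infty}$, let $V_1=(\Cc+B(0,\rho))\cap\OM$ and $V_2=\OM\setminus V_1$, and choose $\rho$ so that $\meas V_1=r_1/(2r_\infty)$. By the conservation laws \eqref{norm om}, for every $t$ one has
\[
\|\om(t,\cdot)\|_{L^1(V_1)}\leq r_\infty\,\meas V_1=\tfrac{r_1}{2},
\qquad\text{hence}\qquad
\|\om(t,\cdot)\|_{L^1(V_2)}\geq \tfrac{r_1}{2},
\]
while on $V_2$ the continuity of $\Tc$ gives $|\Tc(y)|\geq 1+C_\rho$. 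Dropping (by your correct sign considerations) the $V_1$ contribution and the harmonic term, and applying the kernel lower bound only on $V_2$, then yields $L_1(t,x)\geq C_2(|\Tc(x)|-1)$ with $C_2$ depending only on $\mathbf{T}^*$ and $\om_0$. This measure-theoretic selection of $V_2$ is the key idea missing from your proposal; with it inserted, the rest of your argument goes through as you outlined.
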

\begin{proof}
Again, we write the details in the case (2). We denote by $r_\infty:= \|\om_0 \|_{L^\infty}$ and $r_1:= \|\om_0 \|_{L^1}$. For $\rho >0$, we denote by 
\[V_1:= (\Cc + B(0,\rho))\cap \OM =\{ x \in \OM; \mathrm{dist}(x,\Cc) < \rho\}, \ V_2:= \OM \setminus V_1.\]
We fix $\rho$ such that the lebesgue measure of $V_1$ is equal to $r_1/(2r_\infty)$.

We deduce from \eqref{norm om} that
\[ r_1 = \| \om(t,\cdot)\|_{L^1(V_1)} + \| \om(t,\cdot)\|_{L^1(V_2)}\]
with   $\| \om(t,\cdot)\|_{L^1(V_1)} \leq r_\infty r_1/(2r_\infty)=r_1/2$ which implies that $\| \om(t,\cdot)\|_{L^1(V_2)} \geq r_1/2$.

As the logarithm of the fraction is negative (see the proof of Lemma \ref{L1 maj}), we have, with the sign condition, that:
\[ L_1(t,x) \geq \frac1{2\pi} \int_{V_2} \ln\Bigl( \frac{|\Tc(x)-\Tc(y)|}{|\Tc(x)-\Tc(y)^*||\Tc(y)|} \Bigl)\om(y) \, dy.\]
Moreover, thanks to the computation made in the proof of Lemma \ref{L1 maj}, we have
\begin{eqnarray*}
\ln\Bigl( \frac{|\Tc(x)-\Tc(y)|}{|\Tc(x)-\Tc(y)^*||\Tc(y)|}\Bigl) &=& \frac12 \ln\Bigl( \frac{|\Tc(x)-\Tc(y)|^2}{|\Tc(x)-\Tc(y)^*|^2|\Tc(y)|^2}\Bigl) \\
&=& \frac12 \ln\Bigl(1- \frac{ (|\Tc(x)|^2-1)(|\Tc(y)|^2-1)}{|\Tc(x)-\Tc(y)^*|^2|\Tc(y)|^2}\Bigl)\\
&\leq & - \frac12 \frac{ (|\Tc(x)|^2-1)(|\Tc(y)|^2-1)}{|\Tc(x)-\Tc(y)^*|^2|\Tc(y)|^2}
\end{eqnarray*}
because $\ln(1+x)\leq x$ for any $x>-1$.

As $\rho >0$ and $\Tc$ is continuous, there exists $C_{\rho}>0 $ such that $|\Tc(y)| \geq 1+ C_{\rho}$, for all $y\in V_2$. Moreover, there exists also $\tilde R_{\mathbf{T}^*}>1$ such that $\Tc(B(0,R_{\mathbf{T}^*}))\subset B(0, \tilde R_{\mathbf{T}^*}).$  Adding the fact that $\om$ is non positive, we have for all $y\in V_2\cap \supp \om$ and $x\in B(0,R_{\mathbf{T}^*})$
\begin{eqnarray*}
\ln\Bigl( \frac{|\Tc(x)-\Tc(y)|}{|\Tc(x)-\Tc(y)^*||\Tc(y)|}\Bigl) \om(y) &\geq &  \frac12 \frac{ (|\Tc(x)|^2-1)(|\Tc(y)|^2-1)}{|\Tc(x)-\Tc(y)^*|^2|\Tc(y)|^2} |\om (y)|\\
&\geq & \frac12 \frac{ (|\Tc(x)|-1)(|\Tc(x)|+1) (|\Tc(y)|-1)(|\Tc(y)|+1)}{(|\Tc(x)|+ 1)^2 |\Tc(y)|^2} |\om (y)|\\
&\geq & \frac12 \frac{ (|\Tc(x)|-1) C_{\rho}}{( \tilde R_{\mathbf{T}^*} + 1)  \tilde R_{\mathbf{T}^*}} |\om (y)|.
\end{eqnarray*}
Integrating this last inequality over $V_2$, we obtain that
\begin{eqnarray*}
 L_1(t,x) &\geq& \frac1{2\pi} \int_{V_2} \ln\Bigl( \frac{|\Tc(x)-\Tc(y)|}{|\Tc(x)-\Tc(y)^*||\Tc(y)|} \Bigl)\om(y) \, dy \geq   \frac{ (|\Tc(x)|-1) C_{\rho}}{ 4\pi( \tilde R_{\mathbf{T}^*} + 1)  \tilde R_{\mathbf{T}^*}} \|\om\|_{L^1(V_2)} \\
 &\geq&\frac{  C_{\rho}}{ 8\pi( \tilde R_{\mathbf{T}^*} + 1)  \tilde R_{\mathbf{T}^*}} r_1 (|\Tc(x)|-1),
\end{eqnarray*}
which ends the proof.
\end{proof}

Multiplying by $-1$ the expression of $L_1$, we can establish the same result with opposite signe condition:
\begin{remark}\label{rem : L1 est}
If $\om_0$ is non-negative and $\g_0\leq -\int \om_0 $, , then  there exists $C_2$ such that
\[ -L_1(t,x) \geq C_2 ||\Tc(x)|-1|, \ \forall x\in B(0,R_{\mathbf{T}^*}), \ \forall t\in [0,{\mathbf{T}^*}].\]
\end{remark}

From these two lemmas, it follows obviously the following.

\begin{corollary}\label{lem : sign L1} 
If $\om_0$ is non-positive and $\g_0\geq -\int \om_0$ (only for (2)), then we have that 
\begin{itemize}
\item $L_1(x) > 0$ for all $x\in \OM$;
\item $L_1(x) \to 0$ if and only if $x\to \pd \OM$.
\end{itemize}
If $\om_0$ is non-negative and $\g_0\leq - \int \om_0$ (only for (2)), then we have that 
\begin{itemize}
\item $L_1(x) < 0$ for all $x\in \OM$;
\item $L_1(x) \to 0$ if and only if $x\to \pd \OM$.
\end{itemize}
\end{corollary}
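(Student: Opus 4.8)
The plan is to read off both assertions from the two-sided control of $L_1$ by $|\,|\Tc(x)|-1|$ proved in Lemmas \ref{L1 maj} and \ref{L1 est}, combined with the boundary behaviour of the conformal map. First I would record the dictionary relating the two ways of measuring proximity to $\pd\OM$: since $\Tc$ and $\Tc^{-1}$ extend continuously up to the boundary (Theorem \ref{grisvard}) with $|\Tc|\equiv 1$ on $\pd\OM$ and $|\Tc(x)|\ne 1$ for $x\in\OM$, a sequence $x_n\in\OM$ satisfies $\mathrm{dist}(x_n,\pd\OM)\to 0$ if and only if $|\,|\Tc(x_n)|-1|\to 0$. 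Since $\Cc$ is bounded we have $\pd\OM\subset B(0,R_{\mathbf{T}^*})$, so any approach to the boundary takes place inside the ball on which Lemmas \ref{L1 maj}--\ref{L1 est} hold, with constants uniform for $t\in[0,\mathbf{T}^*]$.

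I would then treat the configuration $\om_0\le 0$, $\g_0\ge-\int\om_0$ (the hypothesis on $\g_0$ being vacuous in case (1)). The strict positivity is immediate from Lemma \ref{L1 est}: for $x\in\OM$ one has $|\,|\Tc(x)|-1|>0$, whence $L_1(t,x)\ge C_2|\,|\Tc(x)|-1|>0$. Globally on $\OM$ the same sign can be seen directly from the integrand, since the logarithm of the fraction in \eqref{L1} is strictly negative for $x,y\in\OM$, the factor $\om\le 0$ then renders the integral term nonnegative, and in case (2) the extra term $\frac{\a}{2\pi}\ln|\Tc(x)|$ is nonnegative because $\a=\g_0+\int\om_0\ge 0$ and $\ln|\Tc(x)|>0$; the sum is strictly positive as soon as $\om_0\not\equiv 0$ or $\a>0$.

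For the equivalence $L_1\to 0\iff x\to\pd\OM$ I would simply squeeze $L_1$ between its two bounds. If $x\to\pd\OM$ then $|\,|\Tc(x)|-1|\to 0$, so the upper estimate $|L_1(t,x)|\le C_1|\,|\Tc(x)|-1|^{1/2}$ of Lemma \ref{L1 maj} forces $L_1\to 0$; conversely the lower estimate $L_1(t,x)\ge C_2|\,|\Tc(x)|-1|\ge 0$ of Lemma \ref{L1 est} shows that $L_1\to 0$ entails $|\,|\Tc(x)|-1|\to 0$, i.e. $x\to\pd\OM$ by the dictionary. The opposite configuration $\om_0\ge 0$, $\g_0\le-\int\om_0$ is identical once Lemma \ref{L1 est} is replaced by Remark \ref{rem : L1 est}, which gives $-L_1\ge C_2|\,|\Tc(x)|-1|$ and hence $L_1<0$ with the same limiting behaviour.

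Since everything reduces to a squeeze, there is no real obstacle left beyond the two lemmas themselves; the only feature worth flagging is that the upper and lower bounds carry different powers of $|\,|\Tc(x)|-1|$ (exponents $1/2$ and $1$). For the present qualitative statement this is harmless, as both powers vanish together as $|\Tc(x)|\to 1$ so the two implications decouple; the finer interplay of the two exponents is what will be needed for the quantitative Liapounov estimate in Section \ref{sect : 4}.
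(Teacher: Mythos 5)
Your proof is correct and follows essentially the same route as the paper: the paper also deduces the corollary directly from the squeeze between Lemma \ref{L1 maj} and Lemma \ref{L1 est} (resp.\ Remark \ref{rem : L1 est}), together with the observation that $|\Tc(x)|\to 1$ if and only if $x\to\pd\OM$. Your additional remarks (the explicit ``dictionary'' via continuity of $\Tc$ and $\Tc^{-1}$, and the direct sign inspection of the integrand) merely spell out what the paper treats as obvious.
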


Indeed, $|\Tc(x)| \to 1$ iff $x\to \pd \OM$.

\subsection{Estimates of the Liapounov}\

The issue of this part is to prove that the trajectory never meets the obstacle in finite time. In other word, let $x_0 \in \supp \om_0$ (then $L_1(0,x_0) \neq 0$) and ${\mathbf{T}^*}>0$, we will prove that $L(t)$ stays bounded in $[0,{\mathbf{T}^*}]$.  Then, we differentiate $L$:
\[ L'(t)= -\Bigl( \pd_t L_1(t,\phi (t)) + \phi'(t)\cdot \na L_1(t,\phi (t)) \Bigl) / L_1(t,\phi (t))\]
and we want to estimate the right hand side term.

As usual, we write the details for the case (2).

On one hand, we note that
\begin{eqnarray*}
u(t,x) \cdot \na L_1(t,x) &= &u(t,x) \cdot \Bigl[ \frac1{2\pi} \int_{\OM}\Bigl( \frac{\Tc(x)-\Tc(y)}{|\Tc(x)-\Tc(y)|^2}- \frac{\Tc(x)-\Tc(y)^*}{|\Tc(x)-\Tc(y)^*|^2} \Bigl)\om(y) \, dy D\Tc(x) \\
&&+ \frac{\a}{2\pi} \frac{\Tc(x)}{|\Tc(x)|^2} D\Tc(x)\Bigl] \\
&\equiv &0 
\end{eqnarray*}
thanks to the explicit formula of $u$ (see \eqref{biot unbd}).

On the other hand, we use the equation\footnote{to justify that it works even for a weak solution, the reader can see the first lines of the proof of Proposition \ref{compact_vorticity}.} verified by $\om$ to have
\begin{eqnarray*}
\pd_t L_1(t,x) &=& \frac1{2\pi} \int_{\OM} \ln\Bigl( \frac{|\Tc(x)-\Tc(y)|}{|\Tc(x)-\Tc(y)^*||\Tc(y)|} \Bigl)\pd_t \om(y) \, dy \\
&=&-\frac1{2\pi} \int_{\OM} \ln\Bigl( \frac{|\Tc(x)-\Tc(y)|}{|\Tc(x)-\Tc(y)^*||\Tc(y)|} \Bigl) \diver(u(y)\om(y)) \, dy \\
&=& \frac1{2\pi} \int_{\OM} \na_y \Bigl[\ln\Bigl( \frac{|\Tc(x)-\Tc(y)|}{|\Tc(x)-\Tc(y)^*||\Tc(y)|} \Bigl)\Bigl]\cdot u(y)\om(y) \, dy .
\end{eqnarray*}
Now, we use the symmetry of the Green kernel (see Subsection \ref{sect : biot})
\[\na_y \ln\Bigl( \frac{|\Tc(x)-\Tc(y)|}{|\Tc(x)-\Tc(y)^*||\Tc(y)|} \Bigl) = \na_y \ln\Bigl( \frac{|\Tc(y)-\Tc(x)|}{|\Tc(y)-\Tc(x)^*||\Tc(x)|} \Bigl)\]
 and the  explicit formula of $u(y)$ to write
\begin{equation*}\begin{split}
\pd_t L_1(t,x) = \frac1{2\pi} \int_{\OM}  \Bigl[ \Bigl( \frac{\Tc(y)-\Tc(x)}{|\Tc(y)-\Tc(x)|^2} &- \frac {\Tc(y)-\Tc(x)^*}{|\Tc(y)-\Tc(x)^*|^2} \Bigl) \\
&D\Tc(y)\frac{1}{2\pi} D\Tc^T(y) \Bigl( R[\om](y)+ \a \frac{\Tc(y)^\perp}{|\Tc(y)|^2}\Bigl) \Bigl] \om(y)\, dy.
\end{split}\end{equation*}
As $\Tc$ is  holomorphic, $D\Tc$ is of the form 
$\begin{pmatrix}
a & b \\
-b & a
\end{pmatrix}$
 and we can check that $D\Tc(y)D\Tc^T(y)=(a^2+b^2)Id=|\det(D\Tc)(y)|Id$, so
\begin{equation}\label{dtL1}\begin{split}
\pd_t L_1(t,x) = \frac1{(2\pi)^2} \int_{\OM}  \Bigl[ \Bigl( \frac{\Tc(y)-\Tc(x)}{|\Tc(y)-\Tc(x)|^2} &- \frac {\Tc(y)-\Tc(x)^*}{|\Tc(y)-\Tc(x)^*|^2} \Bigl) \\
& \Bigl( R[\om](y)+ \a \frac{\Tc(y)^\perp}{|\Tc(y)|^2}\Bigl) \Bigl] |\det(D\Tc)(y)| \om(y)\, dy.
\end{split}\end{equation}

The goal is to estimate $\pd_t L_1/L_1$. However, Corollary \ref{lem : sign L1} states that $L_1$ goes to zero if and only if $x\to \pd \OM$. Then it is important to show that $\pd_t L_1$  tends to zero as $x\to \pd \OM$, and to prove that it goes to zero faster than $L_1$.

We will need the following general lemma.

\begin{lemma}\label{technic}
Let $h$ be a bounded function, compactly supported in $B(0,R_h)$ for some $R_h>1$. Then, there exists $C_h=C(\| h \|_{L^\infty},R_h)$ such that
\[ \int_{D^c}\frac{|h(y)|}{|y-x||y-x^*|}\, dy \leq C_h \Bigl(|\ln (|x|-1)| +|x| \Bigl), \ \forall x \in D^c \]
with the notation $x^*=x/|x|^2$ and $D=B(0,1)$.
\end{lemma}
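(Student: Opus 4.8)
The plan is to bound the integral by its obvious majorant over the support of $h$, and then to analyze the two singularities of the kernel — one at $y=x$, which lies in $D^c$, and one at $y=x^*$, which lies in the open disc $D$ — in terms of their separation $d:=|x-x^*|$. A direct computation gives $d=\frac{|x|^2-1}{|x|}$, whence $|x|-1\le d\le 2(|x|-1)$ for $|x|>1$, so that $d$ is comparable to $|x|-1$ as long as $|x|$ stays bounded; this is the quantity responsible for the logarithmic blow-up. Since $\supp h\subset B(0,R_h)$ and the integrand is nonnegative, I would first reduce to
\[ \int_{D^c}\frac{|h(y)|}{|y-x||y-x^*|}\,dy \leq \|h\|_{L^\infty}\int_{B(0,R_h)}\frac{dy}{|y-x||y-x^*|}=:\|h\|_{L^\infty}J(x),\]
so it suffices to estimate $J(x)$.

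The heart of the argument is a general two-centre estimate: for $a,b\in\R^2$ with $|a-b|=d>0$ and $|a|,|b|\le 2R$,
\[ \int_{B(0,R)}\frac{dy}{|y-a||y-b|}\leq C\Bigl(1+\ln_+\tfrac{R}{d}\Bigr),\]
where $\ln_+$ denotes the positive part of the logarithm. I would prove this by splitting $B(0,R)$ into the two discs $B_a:=\{|y-a|\le d/2\}$ and $B_b:=\{|y-b|\le d/2\}$ and the remainder $M:=B(0,R)\setminus(B_a\cup B_b)$. On $B_a$ the factor $|y-b|$ is bounded below by $d/2$ (and symmetrically on $B_b$), so each disc contributes $\Oc(1)$. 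On $M$ both factors exceed $d/2$; the part of $M$ inside $\{|y-a|<2d\}$ has area $\Oc(d^2)$ and integrand $\Oc(d^{-2})$, contributing $\Oc(1)$, while on the far part $\{|y-a|\ge 2d\}$ one has $|y-b|\ge|y-a|/2$, so the integrand is at most $2|y-a|^{-2}$ and (using $B(0,R)\subset\{|y-a|\le 3R\}$) $\int_{2d\le|y-a|\le 3R}|y-a|^{-2}\,dy=\Oc(\ln_+(R/d))$.

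I would then apply this with $a=x$, $b=x^*$, $R=R_h$ in the regime $|x|\le 2R_h$ (so indeed $|a|=|x|\le 2R_h$ and $|b|=|x^*|<1\le 2R_h$), and, using $d\ge|x|-1$, deduce $J(x)\le C\bigl(1+|\ln(|x|-1)|\bigr)$. In the complementary regime $|x|>2R_h$ the singularity at $x$ is far from the support: for $y\in B(0,R_h)$ one has $|y-x|\ge|x|/2$, whence $J(x)\le \frac{2}{|x|}\int_{B(0,R_h)}\frac{dy}{|y-x^*|}\le \frac{8\pi R_h}{|x|}$, a bounded quantity. Combining the two regimes and absorbing constants gives $J(x)\le C_h\bigl(|\ln(|x|-1)|+|x|\bigr)$, which is the claim.

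The only delicate point is the bookkeeping in the two-centre estimate — verifying that on the far region $|y-b|$ is genuinely comparable to $|y-a|$ and that $B(0,R)$ sits inside $\{|y-a|\le 3R\}$, so the logarithm is captured with the correct radius. Everything else is elementary; in fact the argument yields the sharper bound $C_h\bigl(1+|\ln(|x|-1)|\bigr)$, the extra term $|x|$ in the statement being harmless slack that merely keeps the estimate uniform for large $|x|$.
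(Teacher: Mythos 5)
Your proof is correct, but it follows a genuinely different route from the paper's. The paper splits the integral according to the distance to $x$ only, into $D^c\cap B(x,4\rho)$ and its complement with $\rho=|x|-1$; crucially, on the near region it exploits the constraint $y\in D^c$ to get the lower bound $|y-x^*|\ge 1-|x^*|=\rho/(1+\rho)$, which is what produces the term $C|x|$ (and a second contribution $\ln\frac{|x|+R_h}{4}\le C|x|$ comes from the far region). You instead discard the constraint $y\in D^c$ altogether — legitimate, since the singularity at $x^*$ is locally integrable in two dimensions — and prove a symmetric, self-contained two-centre estimate
\[
\int_{B(0,R)}\frac{dy}{|y-a|\,|y-b|}\le C\Bigl(1+\ln_+\frac{R}{|a-b|}\Bigr),\qquad |a|,|b|\le 2R,
\]
via the three-region decomposition (two $d/2$-discs, the near annulus of area $\Oc(d^2)$, and the far region where $|y-b|\ge|y-a|/2$), then handle the regime $|x|>2R_h$ by the trivial bound $|y-x|\ge|x|/2$. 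The bookkeeping is right: $d=\frac{|x|^2-1}{|x|}\ge|x|-1$, the hypotheses $|x|\le 2R_h$, $|x^*|<1<2R_h$ hold in the first regime, and the far-field regime gives a bounded contribution. What each approach buys: the paper's is shorter and tailored to the specific geometry; yours is reusable (the two-centre lemma makes no reference to $D$), requires the regime splitting only because the lemma needs $|a|\le 2R$, and yields the strictly sharper bound $C_h\bigl(1+|\ln(|x|-1)|\bigr)$ — showing that the $|x|$ term in the statement, which in the paper's proof is forced by the estimate of $I_1$ and of $\ln\frac{|x|+R_h}{4}$, is indeed only harmless slack. Both proofs implicitly assume $|x|>1$, where the right-hand side is finite; this is immaterial.
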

\begin{proof}
We fix $x\in D^c$ and we denote
\[ \rho = |x|-1 \text{ and } \rho^*=1-|x^*|=1-\frac{1}{1+\r}=\frac{\r}{1+\r} .\]

We compute
\[ \int_{D^c}\frac{|h(y)|}{|y-x||y-x^*|}\, dy = \int_{D^c\cap B(x,4\r)}\frac{|h(y)|}{|y-x||y-x^*|}\, dy +\int_{D^c\cap B(x,4\r)^c}\frac{|h(y)|}{|y-x||y-x^*|}\, dy =: I_1+I_2.\]

For $I_1$, we know that $|y-x^*| \geq |y|-|x^*| \geq \r^*$, hence
\begin{eqnarray*}
I_1 &\leq & \frac1{\r^*} \int_{D^c\cap B(x,4\r)}\frac{|h(y)|}{|y-x|}\, dy \leq \frac{\|h\|_{L^\infty}}{\r^*} \int_{B(x,4\r)}\frac{1}{|y-x|}\, dy\\
&\leq & \frac{(1+\r)\|h\|_{L^\infty}}{\r}2\pi4\r
\end{eqnarray*}
which gives that $I_1\leq C_1 |x|$.

Concerning $I_2$, we note that
\[ |x-x^*| = \r + \r^* = \r + \frac{\r}{1+\r}\leq 2 \r \leq \frac12 |y-x|\]
for any $y\in B(x,4\r)^c$. Hence,
\[ |y-x^*| \geq |y-x| - |x-x^*| \geq \frac12 |y-x|,\]
and we have
\begin{eqnarray*}
I_2 &\leq&  \int_{D^c\cap B(x,4\r)^c}\frac{2|h(y)|}{|y-x|^2}\, dy\leq 2\|h\|_{L^\infty} \int_0^{2\pi} \int_{4\r}^{|x|+R_h} \frac1r\, drd\theta\\
&\leq&  4\pi \|h\|_{L^\infty} \ln\frac{|x|+R_h}{4\r}
\end{eqnarray*}
which implies that $I_2 \leq C_2 \Bigl( |\ln(|x|-1) | + \ln \frac{|x|+R_h}{4}\Bigl)$.

We conclude because there exists $C_3 = C_3(R_h)$ such that $\ln \frac{|x|+R_h}{4} \leq C_3 |x|$ for any $x\in D^c$.
\end{proof}

We recall that we have fixed ${\mathbf{T}^*}>0$ and $x_0\in \supp \om_0$. Using \eqref{support}, we denote by $R_{\mathbf{T}^*}:= R_0+C_0 {\mathbf{T}^*}$, such that $\supp \om(t,\cdot)\subset B(0,R_{\mathbf{T}^*})$ for all $t\in [0,{\mathbf{T}^*}]$. Finally, we estimate $\pd_t L_1$ without sign conditions.

\begin{lemma} \label{dtL1 est}
There exists $C_3=C_3({\mathbf{T}^*})$ such that
\[ |\pd_t L_1(t,x) | \leq C_3||\Tc(x)|-1| \Bigl(1+ \Bigl|\ln||\Tc(x)|-1|\Bigl|\Bigl), \ \forall x\in B(0,R_{\mathbf{T}^*}), \ \forall t\in [0,{\mathbf{T}^*}].\]
\end{lemma}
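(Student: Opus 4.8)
The plan is to start from the explicit formula \eqref{dtL1} for $\pd_t L_1$ and estimate the integrand factor by factor. The integrand is a product of three pieces: a difference of two conformal Biot--Savart kernels, the vector $R[\om](y)+\a\,\Tc(y)^\perp/|\Tc(y)|^2$, and the weight $|\det(D\Tc)(y)|\,\om(y)$. My first move is to dispose of the middle factor: by Proposition \ref{biot est}, $R[\om]$ is uniformly bounded in $L^\infty$, and since $\|\om(t,\cdot)\|_{L^1}$ and $\|\om(t,\cdot)\|_{L^\infty}$ are conserved by \eqref{norm om}, this bound is uniform on $[0,\mathbf{T}^*]$; moreover $|\Tc(y)^\perp/|\Tc(y)|^2|=1/|\Tc(y)|\le 1$ since $|\Tc(y)|\ge 1$ on $\OM$. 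Hence $|R[\om](y)+\a\,\Tc(y)^\perp/|\Tc(y)|^2|\le C(\om_0,\g_0)$.

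For the kernel difference I would invoke the elementary identity \eqref{frac} with $a=\Tc(y)-\Tc(x)$ and $b=\Tc(y)-\Tc(x)^*$, so that $a-b=\Tc(x)^*-\Tc(x)$ and
\[
\Bigl|\frac{\Tc(y)-\Tc(x)}{|\Tc(y)-\Tc(x)|^2}-\frac{\Tc(y)-\Tc(x)^*}{|\Tc(y)-\Tc(x)^*|^2}\Bigr|
=\frac{|\Tc(x)^*-\Tc(x)|}{|\Tc(y)-\Tc(x)|\,|\Tc(y)-\Tc(x)^*|}.
\]
A direct computation with $w=\Tc(x)$ gives $|w^*-w|=(|w|^2-1)/|w|=(|w|-1)(|w|+1)/|w|$, which on $B(0,R_{\mathbf{T}^*})$ (where $|\Tc(x)|$ stays between $1$ and a finite constant) is bounded by $2(|\Tc(x)|-1)$. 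This produces the prefactor $||\Tc(x)|-1|$ of the statement.

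The decisive step is then the change of variables $z=\Tc(y)$. Since $\Tc$ is a biholomorphism onto $D^c$ with $\det D\Tc>0$, one has $\det D\Tc(\Tc^{-1}(z))\,\det D\Tc^{-1}(z)=1$, so the weight is exactly the Jacobian: $|\det(D\Tc)(y)|\,dy=dz$. Setting $\tilde\om(z):=\om(\Tc^{-1}(z))$ — bounded and supported in $\Tc(B(0,R_{\mathbf{T}^*}))\subset B(0,R_h)$ for a suitable $R_h>1$ — and $X=\Tc(x)$, the whole expression collapses to
\[
|\pd_t L_1(t,x)|\le C\,|X^*-X|\int_{D^c}\frac{|\tilde\om(z)|}{|z-X|\,|z-X^*|}\,dz,
\]
which is precisely the integral controlled by Lemma \ref{technic}. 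That lemma yields the bound $C_h(|\ln(|X|-1)|+|X|)$, and as $|X|=|\Tc(x)|$ is bounded on $B(0,R_{\mathbf{T}^*})$, combining with $|X^*-X|\le 2(|\Tc(x)|-1)$ gives exactly $C_3||\Tc(x)|-1|\,(1+|\ln||\Tc(x)|-1||)$.

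The point I expect to require the most care is this change of variables: one must recognize that the factor $|\det(D\Tc)(y)|$, which \emph{blows up} near the corners with $\a_i>\pi$, is precisely the inverse Jacobian of $\Tc^{-1}$, so that it absorbs the singularity of $dy=|\det D\Tc^{-1}(z)|\,dz$ and turns the geometrically singular integral over $\OM$ into the clean Euclidean integral over $D^c$ treated by Lemma \ref{technic}. The uniform $L^\infty$ bound on $R[\om]$ (available only because all angles exceed $\pi/2$, via Proposition \ref{biot est}) is what keeps the middle factor harmless. The bounded case (1) is entirely analogous and in fact simpler, the harmonic term $\a\,\Tc(y)^\perp/|\Tc(y)|^2$ being absent.
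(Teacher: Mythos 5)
Your proposal is correct and follows essentially the same route as the paper's own proof: the identity \eqref{frac} applied to the kernel difference, the bound $|\Tc(x)-\Tc(x)^*|\leq 2(|\Tc(x)|-1)$ on $B(0,R_{\mathbf{T}^*})$, the uniform bound on $R[\om]+\a\,\Tc(y)^\perp/|\Tc(y)|^2$ via Proposition \ref{biot est} and \eqref{norm om}, the change of variables $\y=\Tc(y)$ that absorbs $|\det(D\Tc)(y)|$ into the Jacobian, and the conclusion by Lemma \ref{technic}. No gaps.
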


\begin{proof}
Using \eqref{frac} we know that
\[\Bigl| \frac{\Tc(y)-\Tc(x)}{|\Tc(y)-\Tc(x)|^2} - \frac {\Tc(y)-\Tc(x)^*}{|\Tc(y)-\Tc(x)^*|^2} \Bigl| = \frac {|\Tc(x)-\Tc(x)^*|}{|\Tc(y)-\Tc(x)| |\Tc(y)-\Tc(x)^*|}. \]
Then, Proposition \ref{biot est} and \eqref{norm om} allow us to estimate \eqref{dtL1}
\begin{equation*}
|\pd_t L_1(t,x) | \leq C|\Tc(x)-\Tc(x)^*| \int_{\OM}  \frac { |\om(y)|}{|\Tc(y)-\Tc(x)| |\Tc(y)-\Tc(x)^*|}   |\det(D\Tc)(y)|\, dy.
\end{equation*}
On one hand, we have for all $x\in B(0,R_{\mathbf{T}^*})$
\begin{eqnarray*}
|\Tc(x)-\Tc(x)^*| &=& \frac{\Bigl|\Tc(x)|\Tc(x)|^2 - \Tc(x)\Bigl|}{|\Tc(x)|^2}= \frac{|\Tc(x)|^2 - 1}{|\Tc(x)|}\\
&=&\frac{(|\Tc(x)| - 1)(|\Tc(x)| + 1)}{|\Tc(x)|}\leq 2 (|\Tc(x)| - 1).
\end{eqnarray*}

On the other hand, we change variables $\y=\Tc(y)$ and we compute
\[\int_{\OM}  \frac { |\om(y)|}{|\Tc(y)-\Tc(x)| |\Tc(y)-\Tc(x)^*|}   |\det(D\Tc)(y)|\, dy = \int_{D^c}  \frac { |\om(\Tc^{-1}(\y))|}{|\y-\Tc(x)| |\y-\Tc(x)^*|}   \, d\y.\]
As $\| \om \circ \Tc^{-1} \|_{L^\infty} = \|\om_0 \|_{L^\infty}$ and as
\[\supp \om \circ \Tc^{-1} = \Tc(\supp \om) \subset \Tc(B(0,R_{\mathbf{T}^*}))\subset B(0,\tilde R_{\mathbf{T}^*}),\]
we apply Lemma \ref{technic} to establish that
\[\int_{\OM}  \frac { |\om(y)|}{|\Tc(y)-\Tc(x)| |\Tc(y)-\Tc(x)^*|}   |\det(D\Tc)(y)|\, dy \leq C \Bigl(|\ln (|\Tc(x)|-1)| + \tilde R_{\mathbf{T}^*} \Bigl), \ \forall x \in B(0,R_{\mathbf{T}^*}).\]
This finishes the proof.
\end{proof}

\begin{remark}
In the bounded case, there is a tricky difference in the previous proof. We note that
\[ |\Tc(x)-\Tc(x)^*| =\frac{(1- |\Tc(x)|)(|\Tc(x)| + 1)}{|\Tc(x)|}\leq 2 \frac{(1-|\Tc(x)| )}{|\Tc(x)|}\]
with $|\Tc(x)|$ which can go to zero. To fix this problem, we can prove a similar result to  Lemma \ref{technic}: there exists $C_h=C(\| h \|_{L^\infty})$ such that
\[ \frac1{|x|} \int_{D}\frac{|h(y)|}{|y-x||y-x^*|}\, dy \leq C_h \Bigl(|\ln (1-|x|)| +1 \Bigl), \ \forall x \in D.\]
Indeed, we can write $|x| |y-x^*| = \Bigl|y |x| - \frac{x}{|x|} \Bigl|$ and we deduce putting $\r:= 1- |x|$ that:
\begin{itemize}
\item for $y\in B(x,4\r)\cap D$, $ \Bigl|y |x| - \frac{x}{|x|} \Bigl| \geq  \Bigl| \frac{x}{|x|} \Bigl| - |y| |x| \geq 1-|x|=\r$;
\item for $y\in B(x,4\r)^c\cap D$, $ \Bigl|y |x| - \frac{x}{|x|} \Bigl|^2 - |y-x|^2 = (1-|y|^2)(1-|x|^2) \geq  0$.
\end{itemize}
Using this two inequality, we follow exactly the proof of Lemma  \ref{technic}, which allows us to establish Lemma  \ref{dtL1 est} in the bounded case.
\end{remark}

In light of Lemmas \ref{L1 est} and \ref{dtL1 est}, we see that we have an additional  logarithm which implies that  $\frac{\pd_t L_1}{L_1}\to \infty$ if $x\to \Cc$. However, the logarithm is exactly what we can estimate by Gronwall inequality: $L'(t)=  \frac{\pd_t L_1}{L_1} \approx \ln L_1 = L(t)$. It is the general idea to establish the main result of this section.

\begin{proposition}\label{prop support}
We assume that $\om_0$ is non-positive, compactly supported in $\OM$ and $\g_0\geq - \int \om_0$. Then, for any ${\mathbf{T}^*}>0$, there exists $C_{\mathbf{T}^*}$ such that
\[ L(t) \leq C_{\mathbf{T}^*},\ \forall x_0\in \supp \om_0, \ \forall t\in [0,{\mathbf{T}^*}].\]
\end{proposition}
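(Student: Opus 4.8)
The plan is to differentiate $L(t)=-\ln|L_1(t,\phi(t))|$ along the trajectory $\phi=\phi_{x_0}$ and to reduce the statement to a Gronwall inequality. The first observation is that $L_1$ is exactly the stream function of $u$: the integral term in \eqref{L1} equals $\int_\OM G_\OM(x,y)\om(t,y)\,dy$, whose $\na^\perp$ is $K_\OM[\om]$, while $\frac{\a}{2\pi}\ln|\Tc(x)|$ is the stream function of $\a H_\OM$; hence $u=\na^\perp L_1$ and $u\cdot\na L_1\equiv 0$, which is precisely the cancellation already recorded earlier in this subsection. Since $\phi'(t)=u(t,\phi(t))$, the transport term drops out of
\[ L'(t)=-\frac{\pd_t L_1(t,\phi(t))+\phi'(t)\cdot\na L_1(t,\phi(t))}{L_1(t,\phi(t))}, \]
leaving $L'(t)=-\pd_t L_1(t,\phi(t))/L_1(t,\phi(t))$. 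By \eqref{support} the point $\phi(t)$ stays in $B(0,R_{\mathbf{T}^*})$ for $t\in[0,\mathbf{T}^*]$, so Lemmas \ref{L1 maj}, \ref{L1 est} and \ref{dtL1 est} all apply at $x=\phi(t)$.

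Next I would insert the available estimates. Under the sign hypothesis Corollary \ref{lem : sign L1} gives $L_1>0$, so $|L_1|=L_1$, and dividing the upper bound of Lemma \ref{dtL1 est} by the lower bound of Lemma \ref{L1 est} the factor $||\Tc(\phi(t))|-1|$ cancels between numerator and denominator:
\[ |L'(t)|\leq \frac{C_3}{C_2}\Bigl(1+\bigl|\ln||\Tc(\phi(t))|-1|\bigr|\Bigr). \]
This cancellation is the whole point: the velocity and the integral defining $\pd_t L_1$ blow up at the corners, yet the quotient $\pd_t L_1/L_1$ retains only a logarithmic singularity.

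It remains to absorb this logarithm into $L$ itself. Taking $-\ln$ in the bound $L_1\leq C_1||\Tc(x)|-1|^{1/2}$ of Lemma \ref{L1 maj} gives, as soon as $||\Tc(\phi(t))|-1|<1$, that $L(t)\geq-\ln C_1+\tfrac12\bigl|\ln||\Tc(\phi(t))|-1|\bigr|$, i.e. $\bigl|\ln||\Tc(\phi(t))|-1|\bigr|\leq 2L(t)+2\ln C_1$. The mismatch between the exponent $1/2$ here and the exponent $1$ in Lemma \ref{L1 est} is exactly what makes $\ln||\Tc(\phi)|-1|$ comparable to $-L$, and substituting yields a differential inequality
\[ L'(t)\leq A+BL(t) \]
for constants $A,B$ depending only on $\mathbf{T}^*$.

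Finally I would run Gronwall. The crucial point for its validity is that $L(t)>-\ln C_2$ forces $||\Tc(\phi(t))|-1|<1$ — otherwise $L_1\geq C_2$ by Lemma \ref{L1 est} and $L\leq-\ln C_2$ — so the differential inequality holds precisely on the set where $L$ exceeds the fixed level $-\ln C_2$; a standard comparison with the solution of $y'=A+By$ then caps $L$ on $[0,\mathbf{T}^*]$. The initial value is controlled uniformly in $x_0$: since $\om_0$ is compactly supported \emph{inside} $\OM$, $\supp\om_0$ lies at positive distance from $\pd\OM$, so $||\Tc(x_0)|-1|\geq c_0>0$ there and $L_1(0,x_0)\geq C_2c_0$ by Lemma \ref{L1 est}, whence $L(0)\leq-\ln(C_2c_0)$ for every $x_0\in\supp\om_0$. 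The comparison then bounds $L(t)$ on $[0,\mathbf{T}^*]$ by a constant depending only on $\mathbf{T}^*$, $\om_0$ and $\g_0$, as claimed. With all the quantitative lemmas in hand the main obstacle is merely organizational — ensuring the Gronwall inequality is invoked only where $L>-\ln C_2$ — which is dispatched by the comparison/first-exit argument just described.
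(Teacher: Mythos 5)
Your proof is correct and follows the same overall strategy as the paper: the cancellation $u\cdot\na L_1\equiv 0$ reduces $L'(t)$ to $-\pd_t L_1/L_1$ along the flow, Lemmas \ref{L1 maj}, \ref{L1 est} and \ref{dtL1 est} provide the quantitative input, and a linear differential inequality $L'\leq A+BL$ is closed by Gronwall. The one step you treat differently is the absorption of the logarithm. The paper keeps the factor $||\Tc(\phi(t))|-1|$ in the numerator and uses the monotonicity of $x\mapsto -x\ln x$ on $(0,\mathrm{e}^{-1})$, combined with the lower bound of Lemma \ref{L1 est} and only the crude consequence $L_1\leq C_4$ of Lemma \ref{L1 maj}, to obtain $|\pd_t L_1|\leq L_1(C_5-C_6\ln L_1)$ pointwise on all of $B(0,R_{\mathbf{T}^*})\cap\OM$; dividing by $L_1$ then gives $L'\leq C_5+C_6 L$ with no case distinction. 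You instead cancel $||\Tc(\phi(t))|-1|$ between Lemmas \ref{dtL1 est} and \ref{L1 est} first, and then invoke the full strength of Lemma \ref{L1 maj}, namely $L_1\leq C_1||\Tc(x)|-1|^{1/2}$, so that the exponent mismatch ($1/2$ versus $1$) converts $|\ln||\Tc(\phi(t))|-1||$ into $2L(t)+2\ln C_1$. That conversion is only available where $||\Tc(\phi(t))|-1|<1$, which is why you need the first-exit/comparison argument on the region $\{L>-\ln C_2\}$ --- a correct but additional complication that the paper's uniform estimate avoids. In exchange, your version makes explicit that it is precisely the gap between the exponents in Lemmas \ref{L1 maj} and \ref{L1 est} that produces a logarithm of $L_1$, i.e.\ a multiple of $L$ itself; both devices land on the same Gronwall inequality, the same treatment of the initial value $L(0)$ (positive distance of $\supp \om_0$ from $\pd\OM$), and the same uniform bound on $[0,{\mathbf{T}^*}]$.
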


\begin{proof}
As the support of $\om_0$ does not intersect $\pd \OM$, we have by continuity of $\Tc$ and by Lemma \ref{L1 est} that
\[ L(0) = - \ln L_1(0,x_0) \leq -\ln C_2(|\Tc(x_0)|-1) \]
is bounded uniformly in $x_0 \in \supp \om_0$.

For any, $x_0 \in \supp \om_0$, \eqref{support} gives that $\phi(t)\in B(0,R_{{\mathbf{T}^*}})$, for all $t\in [0,{\mathbf{T}^*}]$. Therefore, the computation made in the begin of this subsection gives
\[L'(t) = -\pd_t L_1(t,\phi (t))  / L_1(t,\phi (t)).\]

As $L_1$ is positive, we have
\[L'(t) = -\pd_t L_1(t,\phi (t))  / L_1(t,\phi (t)) \leq |\pd_t L_1(t,\phi (t))|  / L_1(t,\phi (t)).\]

Lemma \ref{L1 est} states that there exists $C_2$ such that
\begin{equation}\label{L 1}
 L_1(t,\phi (t)) \geq C_2 ( |\Tc(\phi (t))| -1).
\end{equation}
Moreover, thanks to Lemma \ref{L1 maj}, it is easy to find $C_4$ such that
\begin{equation}\label{L 2}
L_1(t,x) \leq C_4, \ \forall x \in B(0,R_{\mathbf{T}^*}\cap \OM), \ \forall t\in [0,{\mathbf{T}^*}].
\end{equation}
Finally, we proved in Lemma \ref{dtL1 est} that there exists $C_3$ such that
\begin{equation}\label{L 3}
|\pd_t L_1(t,\phi (t)) | \leq C_3(|\Tc(\phi (t))|-1)\Bigl(1+ |\ln(|\Tc(\phi (t))|-1)|\Bigl).
\end{equation}

We can easily check that in the interval $(0,\mathrm{e}^{-1})$ the function $x\mapsto x |\ln x|$ is equal to the map $x\mapsto -x \ln x$, which is increasing. By \eqref{L 1} and \eqref{L 2}, we use the fact that
\[ 0\leq \frac{C_2 ( |\Tc(\phi (t))| -1)}{\mathrm{e} C_4} \leq \frac{L_1(t,\phi (t))}{\mathrm{e} C_4} \leq \mathrm{e}^{-1}\]
to apply this remark on \eqref{L 3}:
\begin{eqnarray*}
|\pd_t L_1(t,\phi (t)) |& \leq& C_3(|\Tc(\phi (t))|-1)\Bigl(1+ | \ln \frac{\mathrm{e} C_4}{C_2}|+  |\ln\frac{C_2(|\Tc(\phi (t))|-1)}{\mathrm{e} C_4}|\Bigl)  \\
&\leq& C_3(|\Tc(\phi (t))|-1)\Bigl(1+ | \ln \frac{\mathrm{e} C_4}{C_2}| \Bigl) - \frac{\mathrm{e}C_3C_4}{C_2} \frac{C_2 ( |\Tc(\phi (t))| -1)}{\mathrm{e} C_4} \ln\frac{C_2(|\Tc(\phi (t))|-1)}{\mathrm{e} C_4} \\
&\leq& \frac{C_3}{C_2}\Bigl(1+ | \ln \frac{\mathrm{e} C_4}{C_2}| \Bigl) L_1(t,\phi (t)) - \frac{\mathrm{e}C_3C_4}{C_2}\frac{L_1(t,\phi (t))}{\mathrm{e} C_4}\ln\frac{L_1(t,\phi (t))}{\mathrm{e} C_4} \\
&\leq& L_1(t,\phi (t)) (C_5 - C_6 \ln L_1(t,\phi (t))).
\end{eqnarray*}
As $L_1$ is positive, we finally obtain that
\[L'(t) = \frac{-\pd_t L_1(t,\phi (t))}{L_1(t,\phi (t))} \leq \frac{|\pd_t L_1(t,\phi (t))|}{ L_1(t,\phi (t))} \leq C_5 - C_6 \ln L_1(t,\phi (t)) = C_5 + C_6 L(t).\]
The constants $C_5$ and $C_6$ are uniform for $x_0 \in \supp \om_0$ and $t\in [0,{\mathbf{T}^*}]$. Gronwall's lemma gives us that
\[ L(t) \leq (L(0) + \frac{C_5}{C_6}) \mathrm{e}^{C_6 {\mathbf{T}^*}},\ \forall x_0\in \supp \om_0, \ \forall t\in [0,{\mathbf{T}^*}].\]
 \end{proof}

By Corollary \ref{lem : sign L1}, the corollary of this proposition is that the support of $\om(t,\cdot)$ never meets the boundary. As before, we have the same proposition with opposite sign conditions:
\begin{remark}\label{rem sign}
We assume that the support of $\om_0$ is outside a neighborhood of $\pd \OM$, that $\om_0$ is non-negative and $\g_0\leq - \int \om_0$. Then, for any ${\mathbf{T}^*}>0$, there exists $C_{\mathbf{T}^*}$ such that
\[ L(t) \leq C_{\mathbf{T}^*},\ \forall x_0\in \supp \om_0, \ \forall t\in [0,{\mathbf{T}^*}].\]
\end{remark}
Indeed, replacing everywhere $L_1$ by $-L_1$, the last inequality in the proof would be
\[L'(t) = \frac{\pd_t L_1(t,\phi (t))}{-L_1(t,\phi (t))} \leq \frac{|\pd_t L_1(t,\phi (t))|}{- L_1(t,\phi (t))} \leq C_5 - C_6 \ln -L_1(t,\phi (t)) = C_5 + C_6 L(t),\]
which allows us to conclude in the same way.

\section{Vorticity far from the boundary}\label{sect : 4}

The role of this section is to apply rigorously the idea of the previous section. In Section \ref{sect : 3}, we assume that the flows exist and are regular enough to compute derivatives. However, the solution considered in Theorems \ref{main 1} and \ref{main 2} are weak, and such a property is not established in the existence proofs (see \cite{lac_euler, GV_lac}).

Without considering trajectories, we have proved, thanks to renormalized solutions, that the weak solutions verified the classical estimates:
\begin{itemize}
\item conservation of the total mass of the vorticity \eqref{om-est-1};
\item conservation of the $L^p$ norm of the vorticity for $p\in [1,\infty]$ \eqref{om-est-2};
\item conservation of the circulation \eqref{gamma cons} (only for exterior domains);
\item compact support for the vorticity: Proposition \ref{compact_vorticity} (only for exterior domains).
\end{itemize}

We can easily prove that the conservations of the total mass and the $L^1$ norm of the vorticity  implies that
\begin{equation*}
\om_0 \geq 0\ \text{ a.e. in }\ \OM \Longrightarrow   \om(t,x) \geq 0,\ \forall t\geq 0, \text{ a.e. in }\ \OM.
\end{equation*}

Thinking of the Liapounov function used in Section \ref{sect : 3}, we can construct a good test function in order to use the renormalization theory. We establish now the key result for proving the uniqueness.

\begin{proposition}
\label{constant_vorticity_2} Let $\om$ be a global weak solution of \eqref{transport*} such that $\om_0$ is compactly supported in $\OM$.
If $\om_0$ is non-positive and $\g_0\geq -\int \om_0$ (only for exterior domains), then, for any ${\mathbf{T}^*}>0$, there exists a neighborhood $U_{{\mathbf{T}^*}}$ of $\pd{\OM}$ such that
\begin{eqnarray*}
\om(t)\equiv 0 \qquad \textrm{on \; \;} U_{{\mathbf{T}^*}},\qquad \forall t\in [0,{\mathbf{T}^*}].
\end{eqnarray*}
\end{proposition}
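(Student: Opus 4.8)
The plan is to carry the Lagrangian argument of Section \ref{sect : 3} over to the weak setting by replacing the pointwise Liapounov function $L(t)=-\ln|L_1(t,\phi(t))|$ with a whole \emph{scale} of weighted integrals of the vorticity, to which the renormalization identity \eqref{renorm} can be applied. The two structural facts I need are already in hand: first, $L_1$ is (up to the additive term $\frac{\a}{2\pi}\ln|\Tc|$) the stream function of $u$, so that $u\cdot\na L_1\equiv 0$, exactly as computed at the start of Section \ref{sect : 3}; second, combining Lemma \ref{dtL1 est} with the lower bound of Lemma \ref{L1 est} (valid under the sign hypothesis) yields the pointwise control $|\pd_t L_1(t,x)|\le C\,L_1(t,x)\bigl(1+|\ln L_1(t,x)|\bigr)$ on $B(0,R_{\mathbf{T}^*})$, uniformly for $t\in[0,\mathbf{T}^*]$.

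Given a smooth bounded $\zeta$, I would use $\F(t,x)=\zeta\bigl(L_1(t,x)\bigr)$ as test function and $\beta(\om)=\om$ in \eqref{renorm}. Since $u\cdot\na\F=\zeta'(L_1)\,u\cdot\na L_1=0$, only the time derivative survives, giving
\[
\frac{d}{dt}\int_{\OM}\om\,\zeta(L_1)\,dx=\int_{\OM}\om\,\zeta'(L_1)\,\pd_t L_1\,dx .
\]
The role of $\zeta$ is to make the right-hand side controllable by the functional itself. Taking (a bounded truncation near $s=0$ of) the model weight $\zeta_k(s):=\bigl(\max(-\ln s,0)\bigr)^k$, so that $\zeta_k\equiv0$ for $s\ge1$, one verifies the elementary inequality $|\zeta_k'(s)|\,s\,(1+|\ln s|)\le C\,(1+\zeta_k(s))$ with $C$ \emph{independent of} $k$. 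Inserting the pointwise bound on $\pd_t L_1$, and using $\om\le0$ together with $\|\om(t)\|_{L^1}\equiv\|\om_0\|_{L^1}$ from \eqref{om-est-1}--\eqref{om-est-2}, I obtain for $\mathcal{L}_k(t):=\int_{\OM}(-\om)\,\zeta_k(L_1)\,dx$ the differential inequality $\bigl|\tfrac{d}{dt}\mathcal{L}_k(t)\bigr|\le C\bigl(\|\om_0\|_{L^1}+\mathcal{L}_k(t)\bigr)$, whence by Gronwall $\mathcal{L}_k(t)\le\bigl(\mathcal{L}_k(0)+\|\om_0\|_{L^1}\bigr)e^{C\mathbf{T}^*}$ with $C$ uniform in $k$.

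The exact vanishing then comes from letting $k\to\infty$. Since $\supp\om_0$ lies at positive $L_1$–distance $c_0$ from $\pd\OM$, we have $\mathcal{L}_k(0)\le\|\om_0\|_{L^1}M_0^{\,k}$ with $M_0=|\ln c_0|$, while on $\{L_1(t,\cdot)<\e\}$ (with the truncation scale chosen below $\e$) we have $\zeta_k(L_1)\ge|\ln\e|^{\,k}$; hence $\int_{\{L_1(t,\cdot)<\e\}}|\om(t)|\le\|\om_0\|_{L^1}\bigl(M_0e^{C\mathbf{T}^*}/|\ln\e|\bigr)^{k}$. Choosing $\e$ so small that $|\ln\e|>M_0e^{C\mathbf{T}^*}$ and sending $k\to\infty$ forces $\om(t)\equiv0$ on $\{L_1(t,\cdot)<\e\}$ for every $t\in[0,\mathbf{T}^*]$. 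By Corollary \ref{lem : sign L1} and the uniform bound $|L_1(t,x)|\le C_1||\Tc(x)|-1|^{1/2}$ of Lemma \ref{L1 maj}, the fixed set $U_{\mathbf{T}^*}:=\{x\in\OM:\ ||\Tc(x)|-1|<(\e/C_1)^2\}$ is a neighborhood of $\pd\OM$ contained in every $\{L_1(t,\cdot)<\e\}$, which gives the claim; the opposite sign case follows by replacing $L_1$ with $-L_1$, as in Remark \ref{rem : L1 est}.

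The main obstacle is precisely the passage from an integrated estimate to the desired \emph{exact} vanishing: a single logarithmic weight (the literal analogue of $L(t)$) only bounds $\int|\om|\,|\ln L_1|$, which still permits an arbitrarily thin layer of vorticity touching $\pd\OM$. What saves the argument is that the entire family $(-\ln s)^k$ enjoys the same differential inequality with a Gronwall constant that does \emph{not} degenerate as $k\to\infty$, so that the limit can be taken. A secondary point to handle with care is the admissibility of $\F=\zeta_k(L_1)$ in \eqref{renorm}: one truncates $\zeta_k$ near $s=0$ so that $\F$ is bounded with bounded first derivatives, and one checks that $\F$ is genuinely $C^1$ with controlled gradient on the active region $\{s_0\le L_1\le1\}$, which by Lemma \ref{L1 maj} stays uniformly away from the corners of $\pd\OM$, so that there $L_1$ is smooth and $\pd_t L_1$, $\na L_1$ are bounded uniformly on $[0,\mathbf{T}^*]$ (as required by Remark \ref{remark : conserv}(1)).
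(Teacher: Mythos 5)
Your proof takes a genuinely different route from the paper's, and its architecture is sound. The paper also feeds functions of the stream function $L_1$ into the renormalization identity \eqref{renorm}, and rests on exactly the two facts you isolate ($u\cdot\na L_1\equiv 0$, and $|\pd_t L_1|\le C L_1(1+|\ln L_1|)$ on $B(0,R_{\T^*})$, the lower bound of Lemma \ref{L1 est} --- hence the sign hypothesis --- being what makes the latter usable). But where you use the family of static weights $\zeta_k(L_1)=(-\ln L_1)^k$ and let $k\to\infty$, the paper uses $\beta(\om)=\om^2$ and a single \emph{time-rescaled} cutoff $\F(t,x)=\chi_0\bigl((-\ln L_1+\ln C_4)/R(t)\bigr)$ with $R(t)=\l_0\mathrm{e}^{2C_6t}-C_5/C_6$: the exponential growth of $R$ exactly cancels the logarithmic loss coming from $\pd_t L_1/L_1$, so $\int\F\,\om^2\,dx$ is non-increasing, vanishes at $t=0$ for $\l_0$ large, and therefore vanishes on $[0,\T^*]$, forcing $\om\equiv 0$ wherever $\F\equiv 1$. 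Your moment method and the paper's sliding threshold encode the same exponential Gronwall rate, and both pass from $\{L_1<\e\}$ to a fixed neighborhood of $\pd\OM$ via Lemma \ref{L1 maj} in the identical way; the resulting vorticity-free collar is doubly exponentially small in $\T^*$ in both cases. The paper's proof is shorter (monotonicity is immediate, no limit is needed); yours is more modular, since no differential equation for a threshold has to be guessed and polynomial losses in $k$ are tolerated. Your use of $\beta(\om)=\om$ instead of $\om^2$ is fine: the paper itself remarks after its proof that this suffices once the sign of $\om$ is known to be conserved.

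One step is, however, false as stated: the ``elementary inequality'' $|\zeta_k'(s)|\,s\,(1+|\ln s|)\le C(1+\zeta_k(s))$ with $C$ \emph{independent of} $k$, which you single out as what saves the argument. For $s\in(0,1]$ one has $|\zeta_k'(s)|\,s=k(-\ln s)^{k-1}$, hence
\begin{equation*}
|\zeta_k'(s)|\,s\,(1+|\ln s|)=k(-\ln s)^{k-1}+k(-\ln s)^{k}\ \ge\ k\,\zeta_k(s),
\end{equation*}
and at $s=\mathrm{e}^{-1}$ the left-hand side equals $2k$ while $C(1+\zeta_k(s))=2C$: no uniform constant exists. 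What is true (using $(-\ln s)^{k-1}\le 1+(-\ln s)^k$) is the same inequality with $C$ replaced by $2k$. Fortunately your scheme is robust to this repair: the Gronwall constant becomes $C_k=2Ck$, so the Gronwall factor is $\mathrm{e}^{2Ck\T^*}=(\mathrm{e}^{2C\T^*})^k$, still geometric in $k$, and the concluding bound becomes
\begin{equation*}
\int_{\{L_1(t,\cdot)<\e\}}|\om(t,x)|\,dx\ \le\ 2\|\om_0\|_{L^1}\Bigl(\frac{M_0\,\mathrm{e}^{2C\T^*}}{|\ln\e|}\Bigr)^{k}\ \longrightarrow\ 0
\end{equation*}
as soon as $|\ln\e|>M_0\,\mathrm{e}^{2C\T^*}$. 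The point is that the limit $k\to\infty$ does not require $C_k$ to stay bounded, only $C_k/k$, and linear growth is exactly what the weights $(-\ln s)^k$ deliver. With this correction (plus the routine smoothing of $\zeta_k$ at the truncation level $s_0$ and at $s=1$, so that $\F$ is admissible in the sense of Remark \ref{remark : conserv}(1) --- the same issue the paper's own test function has, handled the same way), your argument is complete.
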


\begin{proof} 
According to Proposition \ref{compact_vorticity}, we have
\begin{equation}
\label{support 2} \supp \om(t) \subset
B\left(0,R_0+ C_0 t)\right), \qquad \forall t\geq 0.
\end{equation}
We note $R_{\mathbf{T}^*}:= R_0+C_0 {\mathbf{T}^*}$. 

Thanks to Lemma \ref{L1 maj}, it is easy to find $C_4$ such that
\begin{equation}\label{L 22}
L_1(t,x) \leq C_4, \ \forall x \in B(0,R_{\mathbf{T}^*}\cap \OM), \ \forall t\in [0,{\mathbf{T}^*}].
\end{equation}
We also deduce from the conservation of the vorticity sign that Corollary \ref{lem : sign L1} holds true.

 We aim to apply \eqref{renorm} with the choice $\b(t)=t^2$ and we set
 \begin{equation*}
\F(t,x)=\chi_0 \left( \frac{-\ln L_1(t,x) + \ln C_4}{R(t)}\right),
\end{equation*}
where $\chi_0$ is a smooth function: $\mathbb{R} \to\R^+$ which is identically zero for $|x|\leq 1/2$ and identically one for $|x|\geq 1$ and increasing on $\R^+$, $L_1$ is defined in \eqref{L1} and $R(t)$ is an increasing continuous function  to be determined later on. 

As $L_1(t,x) \leq C_4$, we have that $-\ln L_1(t,x) + \ln C_4$ is positive $\forall x \in B(0,R_{\mathbf{T}^*}\cap \OM), \ \forall t\in [0,{\mathbf{T}^*}]$.

On one hand, Lemma \ref{L1 est} states that there exists $C_2$ such that
\begin{equation}\label{L 12}
 L_1(t,x) \geq C_2 ( |\Tc(x)| -1), \ \forall x \in B(0,R_{\mathbf{T}^*}\cap \OM), \ \forall t\in [0,{\mathbf{T}^*}].
\end{equation}
Finally, we proved in Lemma \ref{dtL1 est} that there exists $C_3$ such that
\begin{equation}\label{L 32}
|\pd_t L_1(t,x) | \leq C_3(|\Tc(x)|-1)\Bigl(1+ |\ln(|\Tc(x)|-1)|\Bigl), \ \forall x \in B(0,R_{\mathbf{T}^*}\cap \OM), \ \forall t\in [0,{\mathbf{T}^*}].
\end{equation}
Then, using the fact that $x\mapsto -x \ln x$ is increasing in $[0,\mathrm{e}^{-1}]$ (see the proof of Proposition \ref{prop support}) we have that
\begin{equation}\label{L bis}
 |\pd_t L_1(t,x) | \leq L_1(t,x) (C_5-C_6 \ln \frac{L_1(t,x)}{C_4}), \ \forall x \in B(0,R_{\mathbf{T}^*}\cap \OM), \ \forall t\in [0,{\mathbf{T}^*}].
\end{equation}

On the other hand, we have
\begin{equation*}
\nabla_x L_1(t,x)=-u^{\bot}(t,x),
\end{equation*}
therefore
\begin{equation*}
u\cdot \nabla \F=u\cdot u^{\bot}\frac{\chi_0'}{R L_1}\equiv 0.
\end{equation*}
Besides,
\begin{equation*}
\dt \F(t,x)=\Big(\frac{R'(t)}{R^2(t)} \ln \frac{L_1(t,x)}{C_4}-\frac{1}{R} \frac{\pd_t L_1(t,x)}{L_1(t,x)}\Big)\chi_0'\left(\frac{-\ln L_1(t,x) + \ln C_4}{R(t)}\right).
\end{equation*}

In view of \eqref{renorm}, this yields for any\footnote{see the proof of Proposition \ref{compact_vorticity} to check that this equality holds for all $T$.} $T\in [0,{\mathbf{T}^*}]$
\begin{equation*}
\begin{split}
\int_{\RR} &\F(T,x) \om^2(T,x)\,dx -\int_{\RR} \F(0,x)
\om^2_0(x)\,dx\\& =\int_0^T \int_{\RR} \om^2(t,x)
\frac{\chi_0'\left(\frac{-\ln L_1(t,x) + \ln C_4}{R}\right)}{R}\left(\frac{R'}{R} \ln \frac{L_1(t,x)}{C_4}-\frac{\pd_t L_1(t,x)}{L_1(t,x)}\right)\, dx\, dt.
\end{split}
\end{equation*}

Since $-\ln \frac{L_1(t,x)}{C_4} \geq 0$, the term $\chi_0'(\frac{-\ln L_1(t,x) + \ln C_4}{R})$ is non negative and non zero provided  $\frac{1}{2}\leq \frac{-\ln (L_1(t,x)/C_4)}{R}\leq 1$, so we obtain
\begin{equation*}
\begin{split}
\int_{\RR} \F(T,x) \om^2(T,x)\,dx -\int_{\RR} \F(0,x)
\om^2_0(x)\,dx& \leq \int_0^T \int_{\RR} \om^2
\frac{\chi_0'}{R}\left(-\frac{R'}{2}  +C_5 + C_6 R \right)\, dx\, dt.
\end{split}
\end{equation*}
In the last inequality, we have used \eqref{L bis}, which is allowed because $\supp \om \subset B(0,R_{\mathbf{T}^*}\cap \OM)$ for all $t\in [0,{\mathbf{T}^*}]$.

We now choose
\begin{equation*}
R(t)=\l_0 \mathrm{e}^{2C_6 t} -\frac{C_5}{C_6},
\end{equation*}
with $\l_0$ to be determined later on, so that
\begin{equation*}
\int_{\RR} \F(T,x) \om^2(T,x)\,dx \leq \int_{\RR} \F(0,x)
\om^2_0(x)\,dx.
\end{equation*}

Since the support of $\om_0$ does not intersect some neighborhood of $\Cc$, the continuity of $\Tc$ implies that there exists $\m_0>0$ such that $\Tc(\supp \om_0)\subset B(0,\mu_0+1)^c$. Then, 
\[0\leq -\ln L_1(0,x) + \ln C_4 \leq -\ln \Bigl(C_2 ( |\Tc(x)| -1)\Bigl) + \ln C_4 \leq -\ln (C_2\m_0) + \ln C_4\]
for all $x$ in the support of $\om_0$.  We finally choose $\l_0$ so that
 \begin{equation*}
 0<\frac{-\ln (C_2\m_0) + \ln C_4}{\l_0  -\frac{C_5}{C_6}} \leq \frac{1}{2}.
 \end{equation*}
 For this choice, we have
 \begin{equation*}
 \F(0,x)\om_0^2(x)=\chi_0\left(\frac{-\ln L_1(0,x) + \ln C_4}{\l_0  -\frac{C_5}{C_6}}\right)\om_0^2(x)\equiv 0.
 \end{equation*}
 We deduce that for all $T\in [0,{\mathbf{T}^*}]$, $\F(T,x)\om^2(T,x)\equiv 0$. Thanks to Lemma \ref{L1 maj}, we know that there exists $C_1$ such that
 \[ L_1(T,x) \leq C_1(|\Tc(x)|-1)^{1/2}, \ \forall x\in B(0,R_{\mathbf{T}^*}), \ \forall T\in [0,{\mathbf{T}^*}].\]
 Therefore, for any $x\in \Tc^{-1}\Bigl(B(0,1+\mathrm{e}^{-\frac2{C_1} (R({\mathbf{T}^*})-\ln C_4)})\setminus B(0,1)\Bigl)$ and any $T\in[0,{\mathbf{T}^*}]$, we have that
 \begin{eqnarray*}
|\Tc(x)|&\leq& 1+\mathrm{e}^{-\frac2{C_1} (R({\mathbf{T}^*})-\ln C_4)}\\
\ln (|\Tc(x)| -1) &\leq & -\frac2{C_1} (R({\mathbf{T}^*})-\ln C_4)\\
-\frac{C_1}2\ln (|\Tc(x)| -1) &\geq &  (R({\mathbf{T}^*})-\ln C_4)
\end{eqnarray*}
which implies that
\begin{equation}\label{ineq1}
\frac{-\frac{C_1}2\ln (|\Tc(x)| -1) + \ln C_4}{R({\mathbf{T}^*})}\geq 1.
\end{equation}
Moreover, for any $x\in B(0,R_{\mathbf{T}^*})$ and $T\in [0,{\mathbf{T}^*}]$ we have that
\begin{eqnarray*}
\ln L_1(T,x)&\leq & \frac{C_1}2 \ln (|\Tc(x)| -1)\\
- \ln L_1(T,x)+\ln C_4 &\geq & -\frac{C_1}2 \ln (|\Tc(x)| -1)+\ln C_4
\end{eqnarray*}
which gives (using that $R$ is an increasing function and that $- \ln L_1(T,x)+\ln C_4\geq 0$):
\[   \frac{- \ln L_1(T,x)+\ln C_4}{R(T)} \geq  \frac{- \ln L_1(T,x)+\ln C_4}{R({\mathbf{T}^*})} \geq  \frac{-\frac{C_1}2\ln (|\Tc(x)| -1) + \ln C_4}{R({\mathbf{T}^*})}.\]
Putting together the last inequality and \eqref{ineq1}, $\F(T,x)\om^2(T,x)\equiv 0$ for any $T\in [0,{\mathbf{T}^*}]$ implies that
\[ \om(T,x)\equiv 0,\ \forall x\in \Tc^{-1}\Bigl(B(0,1+\mathrm{e}^{-\frac2{C_1} (R({\mathbf{T}^*})-\ln C_4)})\setminus B(0,1)\Bigl), \ \forall T\in [0,{\mathbf{T}^*}]\]
and the conclusion follows.
\end{proof}

\begin{remark} \label{rem : sign}
Of course, as in Remarks \ref{rem : L1 est} and \ref{rem sign}, the previous proposition holds true for the opposite sign condition:

\centerline{$\om_0$  non negative and  $\g_0 \leq -\int \om_0$.}
\end{remark}

Actually, we can prove Propositions \ref{compact_vorticity} and \ref{constant_vorticity_2} without the renormalized solutions. Indeed, as we proved in Remark \ref{remark : conserv} that $\om$ stays definite sign (thanks to the renormalization theory), then we can use $\om$ instead of $\om^2$ in the proofs. In this case, we just need that $\om$ is a weak solution in the sense of distribution. However, we have presented here the proofs with $\b(\om)=\om^2$ in order to extend the theorems in the case where $\om_0$ is constant near the boundary (see Section \ref{sect : 6}).

\section{Uniqueness of Eulerian solutions}\label{sect : 5}

\subsection{Velocity formulation}\ 

In order to follow the proof of Yudovich, we give a velocity formulation\footnote{The original proof comes from \cite{ift_lop_euler} and we copy it for a sake of clarity.} of the extension $\bar u$. 

We begin by introducing 
\[ v(x):=\int_{\R^2} K_{\R^{2}}(x-y) \bar\om(y)dy \] 
with $K_{\R^2}(x)=\frac{1}{2\pi}\frac{x^\perp}{|x|^2}$, the solution in the full plane of
\begin{equation*}
\diver v =0 \text{ on } \R^2, \quad \curl v =\bar\om \text{ on } \R^2, \quad \lim_{|x|\to\infty}|v|=0.
\end{equation*}
This velocity is bounded, and we denote the perturbation by $w=\bar u-v$, which belongs to $L^\infty_{\loc}(\R^+;L^p_{\loc}(\R^2))$ for $p<4$, and verifying
\begin{equation*}
\diver w =0 \text{ on } \R^2, \quad \curl w =g_{\om,\g}(s) \d_{\pd \OM} \text{ on } \R^2, \quad \lim_{|x|\to\infty}|w|=0.
\end{equation*}

We infer that $v$ verifies the following equation:

\begin{equation}
\label{vit_equa}
\begin{cases}
v_t+v\cdot \na v+ v\cdot \na w + w\cdot \na v - v(s)^\perp\tilde g_{v,\g}(s)\cdot \d_{\pd \OM}=-\na p, & \text{ in }\R^2\times(0,\infty)  \\
\diver v=0, & \text{ in }\R^2\times(0,\infty) \\
w(x)=\frac{1}{2\pi} \oint_{\pd \OM} \frac{(x-s)^\perp}{|x-s|^2}\tilde g_{v,\g}(s) { ds}, & \text{ in }\R^2\times(0,\infty)  \\
v(x,0)=K_{\R^2}[\bar \om_0], & \text{ in }\R^2.
\end{cases}
\end{equation}
with $\tilde g_{v,\g}:=g_{\curl v,\g}$ (see \eqref{g_o_bis}).

In order to prove the equivalence of (\ref{tour_equa}) and (\ref{vit_equa}) it is sufficient to show that
\begin{equation}
\label{equiv}
\curl[v \cdot \na w + w \cdot \na v - v(s)^\perp\tilde g_{v,\g}(s)\cdot \d_{\pd \OM}]=\diver(\bar \om w)
\end{equation}
for all divergence free fields $v\in W^{1,p}_{\loc}$, with some $p>2$. Indeed, if (\ref{equiv}) holds, then we get for $\bar \om=\curl v$ 
\begin{eqnarray*}
0 &=& -\curl \na p=\curl[v_t+ v \cdot \na v + v \cdot \na w + w \cdot \na v - v(s)^\perp\tilde g_{v,\g}(s)\cdot \d_{\pd \OM}] \\
&=& \pd_t \bar \om + v \cdot \na \bar \om + w \cdot \na \bar \om = \pd_t \bar \om + \bar u \cdot \na \bar \om=0
\end{eqnarray*}
so relation (\ref{tour_equa}) holds true. And vice versa, if (\ref{tour_equa}) holds then we deduce that the left hand side of (\ref{vit_equa}) has zero curl so it must be a gradient.

We now prove (\ref{equiv}). As $ W^{1,p}_{\loc} \subset \mathcal{C}^0$, $v(s)$ is well defined. Next, it suffices to prove the equality for smooth $v$, since we can pass to the limit on a subsequence of smooth approximations of $v$ which converges strongly in $W^{1,p}_{\loc}$ and $\mathcal{C}^0$. Now, it is trivial to check that, for a $2\times 2$ matrix $A$ with distribution coefficients, we have
\[ \curl \diver A =\diver \begin{pmatrix} \curl C_1 \\ \curl C_2 \end{pmatrix} \]
where $C_i$ denotes the $i$-th column of $A$.
For smooth $v$, we deduce
\begin{eqnarray*}
\curl[v \cdot \na w + w \cdot \na v] &=& \curl \diver(v\otimes w+w \otimes v)\\
&=& \diver\begin{pmatrix} \curl(vw_1)+\curl(wv_1) \\ \curl(vw_2)+\curl(wv_2) \end{pmatrix} \\
&=& \diver(w\ \curl v+v \cdot \na^\perp w+ v\ \curl w+w \cdot \na^\perp v).
\end{eqnarray*}
It is a simple computation to check that
\[ \diver(v \cdot \na^\perp w+w \cdot \na^\perp v) = v \cdot \na^\perp \diver w+ w \cdot \na^\perp \diver v + \curl v\ \diver w+ \curl w\ \diver v. \]
Taking into account that we have free divergence fields, we can finish by writing
\begin{equation*}
\curl[v \cdot \na w + w \cdot \na v] = \diver(w\ \curl v +v \tilde g_{v,\g}(s)\cdot \d_{\pd \OM}) 
= \diver(w\ \curl v) + \curl[v(s)^\perp \tilde g_{v,\g}(s)\cdot \d_{\pd \OM}].
\end{equation*}
which proves (\ref{equiv}).

\subsection{Proof of Theorems \ref{main 1}-\ref{main 2}}\ 

The goal is to adapt the proof of Yudovich: let $u_1$ and $u_2$ be two weak solutions of  \eqref{Euler} (Theorem \ref{theorem1}) from the same initial data $u_0$ verifying \eqref{typeinitialdata}-\eqref{imperm}. We define as above $v_1$, $w_1$ (resp. $v_2$, $w_2$) associated to $\om_1:= \curl u_1$ (resp. $\om_2:= \curl u_2$) and $\g_0$ (see \eqref{g_0} and \eqref{gamma cons}). We denote
\[ \tilde \om := \bar \om_1 - \bar \om_2 \]
where the bar means that we extend by zero outside $\OM$ and
\[ \vti := v_1-v_2,\]
which verifies
\begin{equation}
\label{diff_velocity}
\begin{split}
\pd_t \tilde v + \tilde v\cdot \na v_1+v_2\cdot \na\tilde v + \diver (\tilde v \otimes w_1+& v_2\otimes \tilde w+w_1\otimes \tilde v+\tilde w\otimes v_2)\\ &- (v_1(s)^\perp \tilde g_{\tilde v,0}(s)  - \tilde v(s)^\perp  \tilde g_{v_2,\g_0}(s) )\cdot \delta_{\pd \OM} =-\na \tilde p.
\end{split}
\end{equation}
Next, we will multiply by $\tilde v$ and integrate. The difficulty compared with the Yudovich's original proof is that we have some terms as $\int_{\R^2} |w_1| |\tilde v| |\na \tilde v|$ with $w_1$ blowing up near the corners. The general idea is to divide such an integral in two parts: on $U$ a small neighborhood of the boundary where the vorticity vanishes (see Proposition \ref{constant_vorticity_2}) and on $\R^2 \setminus U$ where the velocity $w_1$ is regular. Far from the boundary, we follow what Yudovich did, and near the boundary we compute
\[ \int_U |w_1| |\tilde v| |\na \tilde v| \leq \| w_1\|_{L^1(U)} \| \tilde v \|_{L^\infty(U)} \| \na \tilde v \|_{L^\infty(U)}.\]
Indeed $w_1$ is integrable near the boundary, and as $\tilde v$ is harmonic in $U$ ($\diver \tilde v = \curl \tilde v =0$), then we have
\[ \int_U |w_1| |\tilde v| |\na \tilde v| \leq C \|\tilde v \|_{L^2(U)}^2\]
which will allow us to conclude by the Gronwall's lemma. We see here why Proposition \ref{constant_vorticity_2} is the main key of the uniqueness proof.

This idea was used in \cite{lac_miot} in order to prove the uniqueness of the vortex-wave system, and we follow the same plan.

\medskip

We denote by $W^{1,4}_\s(\RR)$ the set of functions belonging to $W^{1,4}(\RR)$ and which are divergence-free in the sense of distributions, and by $W^{-1,4/3}_\s(\RR)$
its dual space.

First, we prove that we can multiply by $\tilde v$ and integrate. As a consequence of \eqref{vit_equa} and \eqref{diff_velocity}, we obtain the following properties for $\vti$.

\begin{proposition}
\label{prop : cont-velocity} Let $u_0$ verifying \eqref{typeinitialdata}, $u_1,u_2$ be two weak solutions of \eqref{Euler} with initial condition $u_0$. Let $\vti=v_1-v_2$. Then we have
\begin{equation*}
\vti \in L_{\loc}^2\left(\R^+,W^{1,4}_\s(\RR)\right),\quad \pd_t \vti \in L_{\loc}^2\left(\R^+,W^{-1,\frac{4}{3}}_\s(\RR)\right).
\end{equation*}
In addition, we have $\vti\in  C\left(\R^+, L^2(\RR)\right)$ and for all $T\in\R^+$,
\begin{equation*}
\|\tilde v(T)\|_{L^2(\RR)}^2=2\int_0^T \langle \pd_t \tilde v,\tilde v \rangle_{W^{-1,4/3}_\s,W^{1,4}_\s}\,ds,\qquad \forall T\in \R^+.
\end{equation*}
\end{proposition}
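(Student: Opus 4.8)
The plan is to realize $\vti = v_1 - v_2$ as the difference of two full--plane Biot--Savart velocities and to run the standard evolution--triple argument on
\[
W^{1,4}_\s(\RR) \hookrightarrow L^2_\s(\RR) \hookrightarrow W^{-1,4/3}_\s(\RR),
\]
exactly along the lines of \cite{lac_miot}. First I would record the spatial regularity of $v_i = K_{\R^{2}}[\bar\om_i]$. Since $\bar\om_i \in L^\infty_{\loc}(\R^+;L^1\cap L^\infty(\R^2))$ is compactly supported (Proposition \ref{compact_vorticity}), the full--plane kernel estimates give that $v_i$ is bounded and behaves like $\Oc(1/|x|)$ at infinity, hence $v_i \in L^4(\R^2)$, while the Calderon--Zygmund inequality yields $\na v_i \in L^p(\R^2)$ for every finite $p>1$. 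Thus $v_i \in L^\infty_{\loc}(\R^+;W^{1,4}(\R^2))$, and so does $\vti$. The key point is that $\curl \vti = \tilde\om = \bar\om_1-\bar\om_2$ has vanishing integral, because both solutions share the same initial datum and conserve the total mass \eqref{om-est-1}; therefore the leading $\Oc(1/|x|)$ term cancels, $\vti = \Oc(1/|x|^2)$ at infinity, and $\vti \in L^2(\R^2)$ uniformly on compact time intervals. As $\diver v_i = 0$, all fields are divergence free, giving
\[
\vti \in L^2_{\loc}\big(\R^+;W^{1,4}_\s(\RR)\big)\cap L^\infty_{\loc}\big(\R^+;L^2_\s(\RR)\big).
\]

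Next I would estimate $\pd_t \vti$ by pairing the evolution equation \eqref{diff_velocity} with a divergence--free test field $\f \in W^{1,4}_\s(\RR)$, so that $\na\tilde p$ drops out. Each bilinear term ($\vti\cdot\na v_1$, $v_2\cdot\na\vti$, and the divergence terms $\diver(\vti\otimes w_1+\cdots)$) is controlled, after one integration by parts, by a product of $L^4$-- and $L^{4/3}$--type norms against $\|\na\f\|_{L^4}$ or $\|\f\|_{L^4}$, using the embedding $W^{1,4}(\RR)\hookrightarrow L^\infty$. The delicate contributions are those carrying the harmonic corrections $w_1,\tilde w$, which blow up near the corners; here I would use that $w_1$ is only integrable, splitting off a neighbourhood of $\pd\OM$ and invoking the local $L^{4/3}$--integrability of $\na\Tc$ (Theorem \ref{grisvard}, Remark \ref{DT loc}) together with the bound of Proposition \ref{biot est}. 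The singular--measure terms $v_1(s)^\perp \tilde g_{\tilde v,0}(s)\cdot\delta_{\pd\OM}$ pair against the continuous trace $\f|_{\pd\OM}$ (well defined since $W^{1,4}\hookrightarrow C^0$) and are dominated by $\|\f\|_{L^\infty}\,\|\tilde g\|_{L^1(\pd\OM)}$. Collecting these estimates with coefficients square--integrable in time yields $\pd_t\vti \in L^2_{\loc}(\R^+;W^{-1,4/3}_\s(\RR))$.

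Finally, the continuity $\vti \in C(\R^+,L^2(\RR))$ and the energy identity would follow from the abstract lemma on evolution triples. Since $L^2_\s$ is the interpolation midpoint $[\,W^{1,4}_\s,\,W^{-1,4/3}_\s\,]_{1/2}$, a field with $\vti \in L^2_{\loc}(\R^+;W^{1,4}_\s)$ and $\pd_t\vti \in L^2_{\loc}(\R^+;W^{-1,4/3}_\s)$ admits an $L^2$--continuous representative obeying $\tfrac{d}{dt}\|\vti\|_{L^2}^2 = 2\langle \pd_t\vti,\vti\rangle$. I would make this rigorous by a time mollification $\vti_\e = \rho_\e * \vti$, writing the exact identity $\|\vti_\e(T)\|_{L^2}^2 = 2\int_0^T\langle \pd_t\vti_\e,\vti_\e\rangle\,ds$ for the smooth approximations and passing to the limit via the uniform bounds, using that on pairs lying in $L^2\times L^2$ the duality pairing coincides with the $L^2$ inner product.

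I expect the main obstacle to be precisely this last functional--analytic step: because $W^{1,4}(\R^2)$ does \emph{not} embed continuously in $L^2(\R^2)$, the classical Lions--Magenes Gelfand--triple argument cannot be applied verbatim and must be replaced by the interpolation (or mollification) version, which in turn makes it essential to have checked independently that $\vti(t)\in L^2(\R^2)$ -- this is exactly where the zero--mean property of $\tilde\om$ enters. A secondary difficulty, in the second step, is the careful treatment of the corner--singular fields $w_1,\tilde w$ and of the boundary measures, whose integrability near $\pd\OM$ is what keeps the pairing with $\f$ finite.
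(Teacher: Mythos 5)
Your proposal is correct and follows essentially the same route as the paper: the full-plane Biot--Savart decomposition, the zero mean of $\tilde\om$ (from conservation of mass and the shared initial datum) giving $\Oc(1/|x|^2)$ decay and hence the $L^2$ bound, kernel and Calder\'on--Zygmund estimates for the $W^{1,4}$ regularity, duality against divergence-free $W^{1,4}$ test fields (with the boundary measures paired through $W^{1,4}\hookrightarrow C^0$) for $\pd_t \vti$, and an evolution-triple lemma for the continuity and energy identity. The final functional-analytic subtlety you flag is handled by the paper in the same spirit: it first establishes $\vti\in L^\infty_{\loc}(\R^+;L^2(\RR))$ and only then invokes Lemma 1.2 in Chapter III of \cite{temam}, which is exactly the role played by your mollification/interpolation argument.
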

The proof follows easily from the estimates established in Section \ref{sect : 2}. The reader can find the details in Section \ref{sect : technical}.

Now, we take advantage of the fact that $\om_i$ is equal to zero near $\pd \OM$ (Proposition \ref{constant_vorticity_2}) to give harmonic regularity estimates on $\tilde v(t)$.

\begin{lemma} \label{harm} Let $\mathbf{T}^*>0$. We assume that $\om_0$ is compactly supported in $\OM$ and has the sign conditions of Proposition \ref{constant_vorticity_2} (or of Remark \ref{rem : sign}). Then, there exists a neighborhood $U_{\mathbf{T}^*}$ of $\pd{\OM}$ such that for all $t\leq {\T^*}$, $\tilde v(t,\cdot)$ is harmonic on $U_{\T^*}$. In particular, for $O_{\T^*}$ an open set such that $\pd \OM \Subset O_{\T^*} \Subset U_{\T^*}$, we have the following estimates:
\begin{itemize}
\item[(1)] $\|\tilde v(t,.)\|_{L^\infty(O_{\T^*})}\leq C \|\tilde v(t,.)\|_{L^2(\R^2)}$,
\item[(2)] $ \|\na\tilde v(t,.)\|_{L^\infty(O_{\T^*})}\leq C \|\tilde v(t,.)\|_{L^2(\R^2)}$,
\end{itemize}
where $C$ only depends on $O_{\T^*}$.
\end{lemma}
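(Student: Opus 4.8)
The plan is to exploit the fact, already established in Proposition~\ref{constant_vorticity_2}, that both vorticities $\om_1$ and $\om_2$ vanish identically on a fixed neighborhood $U_{\T^*}$ of $\pd\OM$ for all $t\in[0,\T^*]$. Since $\vti=v_1-v_2$ with $v_i=K_{\R^2}[\bar\om_i]$, we have $\curl\vti=\bar\om_1-\bar\om_2=\tilde\om$, which vanishes on $U_{\T^*}$, and $\diver\vti=0$ everywhere. Hence on $U_{\T^*}$ the field $\vti(t,\cdot)$ is both divergence-free and curl-free, i.e. its two components are harmonic there. This gives the harmonicity assertion directly from the support property, with no further computation.

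First I would make the geometry precise: fix $O_{\T^*}$ with $\pd\OM\Subset O_{\T^*}\Subset U_{\T^*}$, and choose a slightly larger open set $O'$ with $O_{\T^*}\Subset O'\Subset U_{\T^*}$, so that there is a definite distance $d=\mathrm{dist}(O_{\T^*},\pd O')>0$. On $O'$ each component of $\vti(t,\cdot)$ is harmonic, so I may invoke the standard interior estimates for harmonic functions (mean-value / gradient estimates): for a function $f$ harmonic on a ball $B(x,d)$ one has $|f(x)|\leq C d^{-1}\|f\|_{L^2(B(x,d))}$ and $|\na f(x)|\leq C d^{-2}\|f\|_{L^2(B(x,d))}$, with $C$ absolute. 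Covering $O_{\T^*}$ by finitely many such balls of radius $d$, all contained in $O'\subset U_{\T^*}$, and summing, I obtain
\[
\|\vti(t,\cdot)\|_{L^\infty(O_{\T^*})}\leq C\,\|\vti(t,\cdot)\|_{L^2(O')}\leq C\,\|\vti(t,\cdot)\|_{L^2(\R^2)},
\]
which is estimate~(1), and likewise for the gradient,
\[
\|\na\vti(t,\cdot)\|_{L^\infty(O_{\T^*})}\leq C\,\|\vti(t,\cdot)\|_{L^2(\R^2)},
\]
which is estimate~(2). The constant $C$ depends only on $d$ and the number of covering balls, hence only on the geometry of $O_{\T^*}$ (and $U_{\T^*}$), as claimed.

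The only subtlety to address rigorously, rather than any genuine obstacle, is that harmonicity of $\vti$ holds a priori only in the sense of distributions: we know $\diver\vti=\curl\vti=0$ weakly on $U_{\T^*}$, and $\vti\in L^2_{\loc}$ by Proposition~\ref{prop : cont-velocity}. By Weyl's lemma (elliptic regularity for $\D\vti=-\na^\perp\curl\vti+\na\diver\vti=0$ componentwise in the distributional sense), $\vti(t,\cdot)$ is automatically smooth and classically harmonic on $U_{\T^*}$, so the interior estimates apply. The dependence on $t$ poses no difficulty since the neighborhood $U_{\T^*}$ is fixed throughout $[0,\T^*]$ and the estimates are pointwise in $t$; one uses $\vti\in C(\R^+,L^2(\R^2))$ from Proposition~\ref{prop : cont-velocity} to make sense of the $t$-slices. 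Thus the proof reduces to the support property plus classical interior elliptic estimates, with no hard analysis remaining.
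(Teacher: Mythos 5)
Your proof is correct and takes essentially the same route as the paper: the paper's own proof is a one-line appeal to the mean-value formula (citing Lemma 3.9 of \cite{lac_miot}), where harmonicity of $\tilde v$ near $\pd \OM$ comes, exactly as you argue, from Proposition \ref{constant_vorticity_2} together with $\curl \tilde v = \tilde \om$ and $\diver \tilde v = 0$. Your additional details (Weyl's lemma to pass from distributional to classical harmonicity, and the interior ball estimates) are just the standard fleshing-out of that citation.
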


The proof is a direct consequence of the mean-value formula (see e.g. the proof of Lemma 3.9 in \cite{lac_miot}). In order to prepare the Gronwall estimate, we establish the following estimates on $w_1-w_2$.

\begin{lemma} \label{est w} Let $\mathbf{T}^*>0$ and $\pd \OM \Subset O_{\T^*} \Subset U_{\T^*}$ as Lemma \ref{harm}. Then $\tilde w:= w_1-w_2$ verifies the following estimates for any $t\in[0,{\T^*}]$:
\begin{itemize}
\item[(1)] $\|\tilde w(t,\cdot)\|_{L^2(\R^2)} \leq 2 \|\tilde v(t,\cdot)\|_{L^2(\R^2)}$,
\item[(2)] $ \|\tilde w(t,\cdot)\|_{L^\infty(O_{\T^*}^c)}\leq C \|\tilde v(t,\cdot)\|_{L^2(\R^2)}$,
\item[(3)] $ \|\na\tilde w(t,\cdot)\|_{L^2(O_{\T^*}^c)}\leq C \|\tilde v(t,\cdot)\|_{L^2(\R^2)}$,
\end{itemize}
where $C$ only depends on $O_{\T^*}$.
\end{lemma}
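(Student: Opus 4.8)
The plan is to exploit the decomposition $\bar u_1-\bar u_2=\tilde v+\tilde w$ valid on all of $\R^2$, where I write $\tilde u:=\bar u_1-\bar u_2$ for the difference of the zero--extended velocities. By construction $\tilde u$ is divergence free on $\R^2$, tangent to $\pd\OM$ from the $\OM$--side, and vanishes identically on $\Cc$; moreover $\tilde w$ is divergence free and curl free away from $\pd\OM$ (its curl being a measure carried by $\pd\OM$), so that each component of $\tilde w$ is harmonic on $\R^2\setminus\pd\OM$. Two decay facts will be used repeatedly. Since $\om_1$ and $\om_2$ share the same conserved total mass, $\int\tilde\om=0$, whence $\tilde v=K_{\R^2}[\tilde\om]=\Oc(1/|x|^2)$ at infinity; and since $u_1,u_2$ carry the same circulation $\g_0$, the fields $w_1,w_2$ have the same leading $\Oc(1/|x|)$ behaviour, so that $\tilde w=\Oc(1/|x|^2)$ as well. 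In particular $\tilde v,\tilde w\in L^2(\R^2)$.

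For item (1) I would establish the orthogonality $\int_{\R^2}\tilde w\cdot\tilde u\,dx=0$. Over $\Cc$ this is immediate, since $\tilde u\equiv 0$ there. Over $\OM$, I first check that the circulation of $\tilde w$ around $\Cc$ vanishes: each $w_i$ inherits the circulation $\g_0$ of $u_i$, because the circulation of $v_i$ on a loop hugging $\pd\Cc$ equals the integral of $\bar\om_i$ over the enclosed set, which is zero as $\bar\om_i$ vanishes in $\Cc$ and, by Proposition \ref{constant_vorticity_2}, near $\pd\OM$. Being curl free with zero circulation, $\tilde w=\na\phi$ for a single--valued potential $\phi$ on $\OM$, and an integration by parts gives
\[ \int_\OM \tilde w\cdot\tilde u = \oint_{\pd\OM}\phi\,\tilde u\cdot\hat n\,ds-\int_\OM \phi\,\diver\tilde u = 0, \]
the boundary term vanishing because $\tilde u\cdot\hat n=0$ on $\pd\OM$ while the flux on large circles is $\Oc(1/R^2)$ by the decay above. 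Consequently, since $\tilde w=\tilde u-\tilde v$,
\[ \|\tilde w\|_{L^2(\R^2)}^2=\langle\tilde w,\tilde u-\tilde v\rangle=-\langle\tilde w,\tilde v\rangle\le \|\tilde w\|_{L^2(\R^2)}\,\|\tilde v\|_{L^2(\R^2)}, \]
which yields (1), in fact with constant $1$.

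Items (2) and (3) are then local elliptic statements on $O_{\T^*}^c$, where $\tilde w$ is harmonic and stays at a fixed distance $d:=\mathrm{dist}(O_{\T^*}^c,\pd\OM)>0$. For (2), the mean value inequality on the balls $B(x,d/2)\subset\R^2\setminus\pd\OM$ gives $|\tilde w(x)|\le C d^{-1}\|\tilde w\|_{L^2(B(x,d/2))}\le C\|\tilde w\|_{L^2(\R^2)}$, and (1) converts this into the bound by $\|\tilde v\|_{L^2(\R^2)}$. For (3), I would invoke the Caccioppoli inequality with a cut--off $\z$ equal to $1$ on $O_{\T^*}^c$ and vanishing in a smaller neighbourhood of $\pd\OM$: since $\na\z$ is supported in a bounded region avoiding the sheet,
\[ \|\na\tilde w\|_{L^2(O_{\T^*}^c)}^2\le C\int_{\R^2}|\na\z|^2|\tilde w|^2\le C\|\tilde w\|_{L^2(\R^2)}^2, \]
and (1) again concludes. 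The bounded case of Theorem \ref{main 1} is identical, with $\Cc$ replaced by $\OM^c$ and $\tilde w$ automatically a gradient since $\OM$ is then simply connected.

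The main obstacle is the rigorous justification of the orthogonality identity behind (1): one must secure the global single--valuedness of $\phi$ (this is exactly where the conservation of both circulation and total mass enters), legitimize the integration by parts with only the $W^{1,4}_{\loc}$ regularity of $\tilde v$ and the harmonic--but--corner--singular behaviour of $\tilde w$ near $\pd\OM$, and control the boundary contribution at infinity through the $\Oc(1/|x|^2)$ decay. Once this identity is in hand, the remaining steps are routine interior estimates for harmonic functions.
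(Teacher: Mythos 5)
Your proposal is correct in substance and reaches all three estimates, but it proves the crucial point (1) by a more self-contained route than the paper. The paper's proof of (1) is essentially a citation: it observes that $\tilde u:=\bar u_1-\bar u_2$ is square integrable, divergence free and (weakly) tangent to $\pd\OM$, while $\tilde v-\tilde u=-\tilde w$ is curl free in $\OM$ with zero circulation around $\Cc$, and then invokes the Leray projector in \emph{arbitrary} domains (Galdi, Theorem 1.1 of Chapter III.1) to get $\|\tilde u\|_{L^2(\OM)}\le\|\tilde v\|_{L^2(\OM)}$, whence (1) with constant $2$ by the triangle inequality. You instead prove the orthogonality $\int_{\R^2}\tilde w\cdot\tilde u\,dx=0$ by hand, writing $\tilde w=\na\phi$ with $\phi$ single-valued (your circulation bookkeeping, resting on conservation of mass and circulation and on Proposition \ref{constant_vorticity_2}, is exactly the same as the paper's) and integrating by parts; this is the proof \emph{behind} the cited projection property, and it even yields the sharper constant $1$. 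The trade-off sits precisely at the step you flag as the main obstacle: the boundary term $\oint_{\pd\OM}\phi\,\tilde u\cdot\hat n\,ds$ is not literally meaningful, since $\tilde u$ has no pointwise normal trace and blows up at the corners; the paper's appeal to Galdi is designed to bypass this, because the $L^2$ Helmholtz decomposition is a Hilbert-space fact requiring no boundary regularity (the paper makes this very remark right after the lemma). To close your argument one should replace the classical integration by parts by the weak form of the tangency condition (as in \eqref{imperm}), i.e.\ use $\int_\OM\tilde u\cdot\na\varphi=0$ for $\varphi\in C^1_c(\overline\OM)$, combined with a cutoff at infinity (justified by your $\Oc(1/|x|^2)$ decay) and a density argument approximating $\phi$ by such $\varphi$'s, which is available since corners of angle less than $2\pi$ keep $\pd\OM$ Lipschitz. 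For (2) your mean-value argument is identical to the paper's; for (3) you use a Caccioppoli inequality where the paper applies the mean-value formula to $\na\tilde w$ on balls of radius $R_1<\mathrm{dist}(\pd\OM,\pd O_{\T^*})$ and converts it to a circle integral of $\tilde w$ --- both are routine interior estimates for harmonic functions, with the minor caveat that your cutoff $\z$ equals $1$ near infinity on the unbounded set $O_{\T^*}^c$, which is harmless given the $\Oc(1/|x|^2)$ decay of $\tilde w$ (and the resulting $\Oc(1/|x|^3)$ decay of $\na\tilde w$).
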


\begin{proof}
We fix $t\in [0,{\T^*}]$ and we denote $\tilde u:= \bar u_1-\bar u_2$. From the explicit formula and the conservation law, we have that
\begin{equation*}
\left\lbrace\begin{aligned}
\diver \tilde u &=0 &\text{ on } \OM, \\
\curl \tilde u &= \tilde \om &\text{ on } \OM, \\
\tilde u\cdot \hat n &=0 &\text{ on } \pd \OM, \\
\int_{\pd \OM} \tilde u\cdot \hat \t &=0 &\text{ (only if $\OM$ is an exterior domain)},\\
\lim_{|x|\to\infty}|\tilde u|&=0 &\text{ (only if $\OM$ is an exterior domain)},
\end{aligned}\right.
\end{equation*}
and
\begin{equation*}
\left\lbrace\begin{aligned}
\diver \tilde v &=0 &\text{ on } \OM, \\
\curl \tilde v &= \tilde\om &\text{ on } \OM, \\
\int_{\pd \OM} \tilde v\cdot \hat \t &=0 &\text{ (only if $\OM$ is an exterior domain)},\\
\lim_{|x|\to\infty}|\tilde v|&=0 &\text{ (only if $\OM$ is an exterior domain)}.
\end{aligned}\right.
\end{equation*}
Indeed, in the case of exterior domains, $\tilde \om\equiv 0$ on $\Cc$ which implies that the circulation of $\tilde v$ around $\Cc$ is equal to zero. Therefore, we have the following.

\begin{lemma} $\tilde u$ is the orthogonal projection of $\tilde v$ on the set of the vector field defined on $\OM$ square integrable, divergence free and tangent to the boundary. Therefore we have:
\[ \| \tilde u(t,\cdot) \|_{L^2(\OM)} \leq \| \tilde v(t,\cdot) \|_{L^2(\OM)} .\]
\end{lemma}
This lemma is a classical property of the Leray projector in arbitrary domains (see \cite[Theo 1.1 in Chap III.1.]{Galdi}). Then the first point is a direct consequence of this lemma:
\[ \|\tilde w(t,\cdot)\|_{L^2(\R^2)} \leq \|\tilde u(t,\cdot)\|_{L^2(\OM)} + \|\tilde v(t,\cdot)\|_{L^2(\R^2)} \leq  \|\tilde v(t,\cdot)\|_{L^2(\OM)} + \|\tilde v(t,\cdot)\|_{L^2(\R^2)} \leq 2 \|\tilde v(t,\cdot)\|_{L^2(\R^2)}.\]

The second point is exactly the same thing as in Lemma \ref{harm}: $\tilde w$ is harmonic in $\OM$ then there exists $C$ depending on $O_{\T^*}$ such that
\[ \|\tilde w(t,\cdot)\|_{L^\infty(O_{\T^*}^c)} \leq C \|\tilde w(t,\cdot)\|_{L^2(\OM)} \leq 2C \|\tilde v(t,\cdot)\|_{L^2(\R^2)}.\]
Another consequence of the mean-value Theorem is that
\[ \|\na \tilde w(t,\cdot)\|_{L^2(O_{\T^*}^c)} \leq C \|\tilde w(t,\cdot)\|_{L^2(\OM)} \leq 2C \|\tilde v(t,\cdot)\|_{L^2(\R^2)}.\]
Indeed, there is $R_1$ such that dist$(\pd \OM,\pd O_{\T^*})>R_1$, then
\begin{eqnarray*}
\|\na \tilde w(t,x)\|_{L^2(O_{\T^*}^c)} &=& \Bigl\| \frac{1}{\pi R_1^2} \int_{B(x,R_1)} \na \tilde w(t,y)\, dy \Bigl\|_{L^2(O_{\T^*}^c)}=\Bigl\| \frac{1}{\pi R_1^2} \int_0^{2\pi} \tilde w(t,x+R_1e^{i\th}) \n \, R_1 d\th \Bigl\|_{L^2(O_{\T^*}^c)}\\
&\leq& \int_0^{2\pi}\frac{1}{\pi R_1} \|  \tilde w(t,x+R_1e^{i\th}) \|_{L^2(O_{\T^*}^c)} \, d\th \leq \frac{2 \|\tilde w(t,\cdot)\|_{L^2(\OM)}}{R_1}.
\end{eqnarray*}
\end{proof}
\begin{remark}We remark that the result from Galdi's book does not require regularity of $\pd \OM$ when we consider the $L^2$ norm (thanks to the Hilbert structure). In contrast for $p\neq 2$, he states that the Leray projector is continuous from $L^p$ to $L^p$ if the boundary $\pd \OM$ is $C^2$. Indeed, in our case we see that $\tilde v$ belongs to $L^p$ for any $p>1$, whereas $\tilde u = \mathbb{P} \tilde v$ does not belongs in $L^p(\OM)$ for some $p>4$ (if there is an angle greater than $\pi$, see Remark \ref{DT loc}).
\end{remark}

\medskip

We can adapt now the Yudovich proof, as it is done in \cite{lac_miot}.

We fix ${\T^*}>0$ in order to fix $O_{\T^*}$ in Lemmata \ref{harm} and \ref{est w}. We consider smooth and divergence-free functions $\F_n\in C^\infty_c\left( \R^+\times \R^2\right)$  converging to $\tilde{v}$ in $L^2_{\loc}\left(\R^+,W^{1,4}(\RR)\right)$ as test functions in \eqref{diff_velocity}, and let $n$ goes to $+\infty$. First, we have for all $T\in [0,{\T^*}]$
\begin{equation*}
\int_0^T \langle \pd_t \tilde v,\F_n \rangle_{W^{-1,4/3}_\s,W^{1,4}_\s}\, ds\to \int_0^T \langle \pd_t \tilde v,\tilde{v} \rangle_{W^{-1,4/3}_\s,W^{1,4}_\s}\,ds,
\end{equation*}
and we deduce the limit in the other terms from the several bounds for $v_i$ stated in the proof of Proposition \ref{prop : cont-velocity}. This yields
\begin{equation}
\label{diff}
\frac{1}{2}\| \tilde v (T,\cdot)\|_{L^2}^2 =I+J+K,
\end{equation}
where
\begin{equation*}
 \begin{split}
I&=- \int_0^T \int_{\R^2} \tilde v \cdot(\tilde v\cdot \na v_1+v_2\cdot \na\tilde v)\,dx\,dt,\\
J&= \int_0^T \int_{\R^2} (\tilde v \otimes w_1+ v_2\otimes \tilde w+w_1\otimes \tilde v+\tilde w\otimes v_2):\na\tilde v\,dx\,dt, \\
K&= \int_0^T \int_{\pd \OM} v_1(s)^\perp \tilde g_{\tilde v,0}(s)  \cdot \tilde v(s)\,ds.
\end{split}
\end{equation*}
The goal is to estimate all the terms in the right-hand side in order to obtain a Gronwall-type inequality.

For the first term $I$ in \eqref{diff}, we begin by noticing that
\begin{equation*}\int_{\RR}(v_2\cdot \na\tilde v)\cdot \tilde v\,dx=\frac{1}{2}\int_{\RR} v_{2}\cdot\na |\tilde v|^2\,dx=-\frac{1}{2}\int_{\RR} |\tilde v|^2 \diver v_2\,dx=0,
\end{equation*}
where we have used that $v_2=\Oc(1/|x|)$ and $\tilde v=\Oc(1/|x|^2)$ at infinity. Moreover, H\"older's inequality gives
\begin{equation*}
\left|\int_{\RR}(\tilde v\cdot \na v_1)\cdot \tilde v\,dx\right|\leq\|\tilde v\|_{L^2}\|\tilde v\|_{L^q}\|\na v_1\|_{L^p},
\end{equation*}
with $\frac{1}{p}+\frac{1}{q}=\frac{1}{2}$. On one hand, Calder\'on-Zygmung inequality states that $\|\na v_1\|_{L^p}\leq Cp\|\om_1\|_{L^p}$ for $p\geq 2$. On the other hand, we write by interpolation $\|\tilde v\|_{L^q}\leq \|\tilde v\|^a_{L^2}\|\tilde v\|^{1-a}_{L^\infty}$ with $\frac{1}{q}=\frac{a}{2}+\frac{1-a}{\infty}$. We have that $a=1-\frac{2}{p}$, so we are led to
\begin{equation}\label{eq : I}
|I| \leq Cp\int_0^T \|\tilde v\|_{L^2}^{2-2/p}\,dt.
\end{equation}

We now estimate $J$. We have
\begin{equation*}
\begin{split}
\int_{\RR} (\tilde v\otimes w_1): \na \tilde v\,dx& = \int_{\RR}\sum_{i,j}\tilde v_i w_{1,j}\pd_j\tilde v_i\,dx = \frac{1}{2}\sum_i\int_{\RR}\sum_j w_{1,j}\pd_j\tilde v_i^2\,dx\\
&=-\frac{1}{2}\sum_i\int_{\RR} \tilde v_i^2 \diver w_1\,dx=0,
\end{split}
\end{equation*}
since $w_1$ is divergence-free, and
\begin{equation}
\label{special}
\Bigl| \int_0^T \int_{\R^2} (w_1\otimes \tilde v):\na\tilde v\,dx\,dt \Bigl| \leq \Bigl| \int_0^T \int_{O_{\T^*}} (w_1\otimes \tilde v ):\na \tilde v\,dx\,dt \Bigl| 
+ \Bigl| \int_0^T \int_{O_{\T^*}^c} (w_1\otimes \tilde v ):\na\tilde v\,dx\,dt \Bigl|.
\end{equation}
We perform an integration by part for the second term in the right-hand side of \eqref{special}. Arguing that $\diver \vti=0$, we obtain
\begin{eqnarray*}
\Bigl| \int_0^T \int_{\R^2} (w_1\otimes \tilde v) : \na \tilde v \, dx \, dt \Bigl| &\leq & \Bigl| \int_0^T \int_{O_{\T^*}} (w_1\otimes \tilde v ) : \na \tilde v \, dx \, dt \Bigl|\\
&& + \Bigl| - \int_0^T \Bigl(\int_{O_{\T^*}^c} (\tilde v \cdot  \na w_1 )\cdot \tilde v \, dx + \int_{\pd O_{\T^*}} (w_1\cdot \vti)(\vti\cdot \n) \, ds \Bigl)\, dt \Bigl| \\
&\leq& \int_0^T  \|w_1\|_{L^1(O_{\T^*})} \|\tilde v\|_{L^\infty(O_{\T^*})} \|\na \tilde v\|_{L^\infty(O_{\T^*})} \, dt \\
&& + \int_0^T \|\na w_1\|_{L^\infty(O_{\T^*}^c)} \|\tilde v\|^2_{L^2} \, dt\\
&& + \int_0^T \| w_1\|_{L^\infty(\pd O_{\T^*})} \|\tilde v\|^2_{L^\infty(\pd O_{\T^*})} | \pd O_{\T^*}| \, dt.
\end{eqnarray*}
As we remarked when we introduce $w$: $\|w_1\|_{L^1(O_{\T^*})}\leq C$ with $C$ depending only on $\OM$, ${\T^*}$ and $u_0$. Moreover, using the harmonicity of $w_1$, we know that $\|\na w_1\|_{L^\infty(O_{\T^*}^c)}$ is bounded by a constant times $\|w_1\|_{L^\infty(V_{\T^*}^c)}$, with $\pd \OM \Subset V_{\T^*} \Subset O_{\T^*}$. Using the behavior of $D\Tc$ at infinity (Proposition \ref{T-inf}), Proposition \ref{biot est}, conservation laws \eqref{om-est-1}, \eqref{om-est-2}, \eqref{gamma cons}, then \eqref{biot unbd} allows us to state that $\| u_1 \|_{L^\infty((0,{\T^*})\times V_{\T^*}^c)}\leq C_0$ with $C_0$ depending only on $\OM$, ${\T^*}$ and $u_0$. As $v_1$ is uniformly bounded, we obtain that $\|\na w_1\|_{L^\infty(O_{\T^*}^c)}$ and $\| w_1\|_{L^\infty(\pd O_{\T^*})}$ is bounded uniformly in $(0,{\T^*})$. Then, according to Lemma \ref{harm}, this gives
\begin{equation*}
\Bigl| \int_0^T \int_{\RR} (w_1 \otimes \tilde v):\na \tilde v\,dx\,dt \Bigl| \ \ \leq\ \  C \int_0^T \|\tilde v\|^2_{L^2} \, dt.
\end{equation*}
 In the same way, we obtain by integration by part
\begin{eqnarray*}
\Bigl| \int_0^T \int_{\R^2} (v_2\otimes \tilde w):\na\tilde v \, dx \, dt \Bigl|  
&\leq  &  \Bigl| \int_0^T \int_{O_{\T^*}} (v_2\otimes \tilde w):\na\tilde v\,dx\,dt \Bigl|\\
&& + \Bigl|- \int_0^T \Bigl(\int_{O_{\T^*}^c} (\tilde w \cdot  \na v_2 )\cdot \tilde v\,dx + \int_{\pd O_{\T^*}} (v_2\cdot\vti)(\tilde w \cdot\n) ds \Bigl)\,dt\Bigl|.
\end{eqnarray*}
Therefore,
\begin{eqnarray*}
\Bigl| \int_0^T \int_{\R^2} (v_2\otimes \tilde w):\na\tilde v\,dx\,dt \Bigl| &\leq & 
\int_0^T  \|\tilde w\|_{L^2(O_{\T^*})} \|v_2\|_{L^2(O_{\T^*})} \|\na \tilde v\|_{L^\infty(O_{\T^*})} \, dt \\
&&+ \int_0^T \| \tilde w\|_{L^\infty(O_{\T^*}^c)} \|\tilde v\|_{L^2} \|\na v_2\|_{L^2} \,dt \\
&&+ \int_0^T\| \tilde w\|_{L^\infty(\pd O_{\T^*})} \|\tilde v\|_{L^\infty(\pd O_{\T^*})} \| v_2\|_{L^\infty}|\pd B|\,dt.
\end{eqnarray*}
Using again Calder\'on-Zygmund inequality for $v_2$ and Lemmata \ref{harm} and \ref{est w}, we get
\begin{equation*}
\Bigl| \int_0^T \int_{\R^2} (v_2\otimes \tilde w):\na\tilde v\,dx\,dt \Bigl| \leq C \int_0^T  \|\tilde v\|_{L^2}^2 \, dt.
\end{equation*}
A very similar computation yields
\begin{eqnarray*}
\Bigl| \int_0^T \int_{\R^2} (\tilde w\otimes v_2): \na\tilde v\,dx\,dt \Bigl|  &\leq&
C \int_0^T \left(\|\tilde w\|_{L^2(O_{\T^*})}+\| \na \tilde w\|_{L^2(O_{\T^*}^c)} +\| \tilde w\|_{L^\infty(\pd O_{\T^*})}\right) \|\tilde v\|_{L^2}\, dt\\
&\leq& C \int_0^T  \|\tilde v\|_{L^2}^2 \, dt.
\end{eqnarray*}
Therefore, we arrive at
\begin{equation}\label{eq : J}
|J|\leq 3C \int_0^T  \|\tilde v\|_{L^2}^2\,dt.
\end{equation}

Finally, using \eqref{g_o_bis} we write the third term $K$ in \eqref{diff} as follows:
\begin{eqnarray*}
K &=&\pm \int_0^T \int_{\pd \OM} (\tilde u \cdot \hat \t) (v_1^\perp   \cdot \tilde v)\,ds\\
&=& \pm \int_0^T \int_{\OM}  \curl \tilde u(v_1^\perp   \cdot \tilde v)\, dx \pm \int_0^T \int_{\OM}  \tilde u \cdot \na^\perp (v_1^\perp   \cdot \tilde v)\, dx,
\end{eqnarray*}
where $\pm$ depends if we treat exterior or interior domains.
Using that $\curl \tilde u= \curl \tilde v$ in $\OM$, $\diver \tilde v = 0$ and the behaviors at infinity, we obtain by several integrations by parts:
\begin{equation*}\begin{split}
\int_{ \OM}  \curl \tilde u(v_1^\perp   \cdot \tilde v)\, dx =& \int_{\OM}  \curl \tilde v (v_1^\perp   \cdot \tilde v)\, dx = \int_{\R^2}  \curl \tilde v (v_1^\perp   \cdot \tilde v)\, dx\\
=& \int_{\R^2} \Bigl(-v_{1,2} \tilde v_1\pd_1 \tilde v_2+v_{1,2} \frac{\pd_2 | \tilde v_1|^2}{2}+v_{1,1} \frac{\pd_1 | \tilde v_2|^2}{2}  - v_{1,1}  \tilde v_2 \pd_2 \tilde v_1\Bigl)\, dx\\
= \int_{\R^2} \Bigl(\pd_1 v_{1,2} \tilde v_1 \tilde v_2&+ v_{1,2} \pd_1 \tilde v_1 \tilde v_2 - \pd_2 v_{1,2} \frac{| \tilde v_1|^2}{2}- \pd_1 v_{1,1} \frac{ | \tilde v_2|^2}{2}  + \pd_2 v_{1,1}  \tilde v_2  \tilde v_1+  v_{1,1}  \pd_2\tilde v_2  \tilde v_1\Bigl)\, dx\\
= \int_{\R^2} \Bigl(\pd_1 v_{1,2} \tilde v_1 \tilde v_2&- v_{1,2} \frac{\pd_2 |\tilde v_2|^2}{2} - \pd_2 v_{1,2} \frac{| \tilde v_1|^2}{2}- \pd_1 v_{1,1} \frac{ | \tilde v_2|^2}{2}  + \pd_2 v_{1,1}  \tilde v_2  \tilde v_1 -  v_{1,1} \frac{ \pd_1 |\tilde v_1|^2}{2} \Bigl)\, dx\\
= \int_{\R^2} \Bigl(\pd_1 v_{1,2} \tilde v_1 \tilde v_2&+ \pd_2 v_{1,2} \frac{ |\tilde v_2|^2}{2} - \pd_2 v_{1,2} \frac{| \tilde v_1|^2}{2}- \pd_1 v_{1,1} \frac{ | \tilde v_2|^2}{2}  + \pd_2 v_{1,1}  \tilde v_2  \tilde v_1 + \pd_1 v_{1,1} \frac{ |\tilde v_1|^2}{2} \Bigl)\, dx.
\end{split}\end{equation*}
Hence,
\[ \Bigl| \int_0^T \int_{\OM}  \curl \tilde u(v_1^\perp   \cdot \tilde v)\, dx \, dt\Bigl| \leq  4 \int_0^T \int_{\R^2} |\na v_1| |\tilde v|^2\, dx \, dt\]
which gives by Calder\'on-Zygmund inequality (as for $I$):
\[ \Bigl| \int_0^T \int_{\OM}  \curl \tilde u(v_1^\perp   \cdot \tilde v)\, dx\, dt \Bigl| \leq Cp\int_0^T \|\tilde v\|_{L^2}^{2-2/p}\,dt.\]

With similar computation, and using Lemmata \ref{harm} and \ref{est w}, we can prove that the second term of $K$ can be treated thanks to:
\begin{eqnarray*}
 \int_0^T \int_{\OM}  |\tilde u| |\na v_1| | \tilde v| \, dx\, dt &\leq& Cp\int_0^T \|\tilde v\|_{L^2}^{2-2/p}\,dt \\
 \int_0^T \int_{O_{\T^*}}  |\tilde u| | v_1| |\na \tilde v| \, dx \, dt&\leq& C \int_0^T  \|\tilde v\|_{L^2}^2 \, dt \\
 \int_0^T \int_{O_{\T^*}^c}  |\tilde v| | \na v_1| |\tilde v| \, dx \, dt&\leq& Cp\int_0^T \|\tilde v\|_{L^2}^{2-2/p}\,dt \\
 \int_0^T \int_{O_{\T^*}^c}  |\tilde w| | \na v_1| |\tilde v| \, dx \, dt&\leq& C \int_0^T  \|\tilde v\|_{L^2}^2 \, dt \\
 \int_0^T \int_{O_{\T^*}^c}  |\na \tilde w| | v_1| |\tilde v| \, dx \, dt&\leq&  C \int_0^T  \|\tilde v\|_{L^2}^2 \, dt \\
 \int_0^T \int_{\pd O_{\T^*}} (|\tilde v| + |\tilde w| ) | v_1| |\tilde v| \, dx \, dt&\leq&  C \int_0^T  \|\tilde v\|_{L^2}^2 \, dt,
\end{eqnarray*}
which implies that
\begin{equation}\label{eq : K}
|K|\leq C \int_0^T  \|\tilde v\|_{L^2}^2\,dt+Cp\int_0^T \|\tilde v\|_{L^2}^{2-2/p}\,dt.
\end{equation}

\bigskip

Therefore, the estimates \eqref{eq : I}, \eqref{eq : J} and \eqref{eq : K} with \eqref{diff} establish that
\[ \| \tilde v (T,\cdot)\|_{L^2}^2 \leq C \int_0^T  \|\tilde v\|_{L^2}^2\,dt+Cp\int_0^T \|\tilde v\|_{L^2}^{2-2/p}\,dt.\]
As we choose $p>2$ and as $\|\tilde v\|_{L^2} \leq C_0$ for all $t\in [0,{\T^*}]$ (see Proposition \ref{prop : cont-velocity}), we have $ \|\tilde v\|_{L^2}^{2/p} \leq C_0^{2/p}$ which implies that for $p$ large enough, the previous inequality gives
\[ \| \tilde v (T,\cdot)\|_{L^2}^2 \leq 2Cp\int_0^T \|\tilde v\|_{L^2}^{2-2/p}\,dt.\]

Using a Gronwall-like argument, this implies
\begin{equation*}
\| \tilde v (T,\cdot)\|_{L^2}^2  \leq (2CT)^p,\qquad \forall p\geq 2.
\end{equation*}
Letting $p$ tend to infinity, we conclude that $\| \tilde v (T,\cdot)\|_{L^2} = 0$ for all $T<\min ({\T^*}, 1/(2C))$. Finally, we consider the maximal interval of $[0,{\T^*}]$ on which $\|\tilde v (T,\cdot)\|_{L^2} \equiv 0$, which is closed by continuity of $\| \tilde v (T,\cdot)\|_{L^2}$. If it is not equal to the whole of $[0,{\T^*}]$, we may repeat the proof above, which leads to a contradiction by maximality. Therefore uniqueness holds on $[0,{\T^*}]$, and this concludes the proof of Theorems \ref{main 1} and \ref{main 2}. Indeed, Lemma \ref{est w} implies that $\| u_1-u_2 \|_{L^2} \leq \| \tilde w \|_{L^2} + \| \tilde v \|_{L^2} \leq 2 \| \tilde v \|_{L^2}$. 

\section{Technical results}\label{sect : technical}

We will use several times the following from \cite{ift}:
\begin{lemma}\label{ift}
Let $S\subset\R^2$, $\a\in (0,2)$ and $g:S\to\R^+$ be a function belonging in $L^1(S)\cap L^r(S)$, for $r > \frac{2}{2-\a}$. Then
\[\int_S \frac{g(y)}{|x-y|^\a}dy\leq C\|g\|_{L^1(S)}^{\frac{2-\a-2/r}{2-2/r}}\|g\|_{L^r(S)}^{\frac{\a}{2-2/r}}.\]
\end{lemma}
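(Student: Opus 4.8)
The plan is to prove this by the classical splitting-and-optimization argument for Riesz-type potentials. I fix $x$ and a radius $\rho>0$ to be chosen later, and I split the integral according to whether $y$ is near or far from $x$:
\[\int_S \frac{g(y)}{|x-y|^\a}\,dy = \int_{S\cap B(x,\rho)} \frac{g(y)}{|x-y|^\a}\,dy + \int_{S\setminus B(x,\rho)} \frac{g(y)}{|x-y|^\a}\,dy =: I_1 + I_2.\]
The far part is controlled by the $L^1$ norm alone: on $S\setminus B(x,\rho)$ one has $|x-y|\geq \rho$, so that
\[I_2 \leq \rho^{-\a}\int_S g(y)\,dy = \rho^{-\a}\|g\|_{L^1(S)}.\]

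For the near part I would apply H\"older's inequality with exponents $r$ and $r'$, where $1/r+1/r'=1$, to obtain
\[I_1 \leq \|g\|_{L^r(S)}\left(\int_{B(x,\rho)} \frac{dy}{|x-y|^{\a r'}}\right)^{1/r'}.\]
The crucial observation is that the hypothesis $r>\frac{2}{2-\a}$ is exactly equivalent to $\a r'<2$: from $1/r<1-\a/2$ one gets $1/r'>\a/2$, that is $r'<2/\a$. This is precisely what makes the singular integral converge in the plane, and passing to polar coordinates in $\R^2$ gives
\[\int_{B(x,\rho)} \frac{dy}{|x-y|^{\a r'}} = \frac{2\pi}{2-\a r'}\,\rho^{2-\a r'},\]
so that $I_1 \leq C\|g\|_{L^r(S)}\,\rho^{2/r'-\a}$, where the exponent $2/r'-\a = 2-2/r-\a$ is again strictly positive by the same hypothesis.

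It remains to optimize over $\rho$. Setting $\b := 2-2/r-\a>0$, so that $\b+\a = 2-2/r$, the combined bound reads $I_1+I_2 \leq C\|g\|_{L^r(S)}\,\rho^{\b} + \|g\|_{L^1(S)}\,\rho^{-\a}$. Choosing $\rho$ to balance the two terms, namely with $\rho^{2-2/r}$ proportional to $\|g\|_{L^1(S)}/\|g\|_{L^r(S)}$, makes both contributions of the same order, and substituting back yields exactly the claimed product
\[\|g\|_{L^1(S)}^{\frac{2-\a-2/r}{2-2/r}}\,\|g\|_{L^r(S)}^{\frac{\a}{2-2/r}}.\]
The computation of the exponents is routine bookkeeping; the only genuine point requiring care is the integrability of the near-part kernel, which is precisely why the assumption $r>\frac{2}{2-\a}$ is imposed in the statement, and one checks that it also guarantees $\b>0$ so that the optimizing radius is well defined.
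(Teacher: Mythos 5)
Your proof is correct: the near/far splitting, the H\"older step with the observation that $r>\frac{2}{2-\a}$ is equivalent to $\a r'<2$ (hence local integrability of the kernel in $\R^2$), and the balancing choice $\rho^{2-2/r}=\|g\|_{L^1}/\|g\|_{L^r}$ all check out, and the exponent bookkeeping yields exactly the stated product. Note that the paper itself gives no proof of this lemma --- it is quoted verbatim from the reference of Iftimie --- and your splitting-and-optimization argument is the standard (and essentially the original) proof of such potential estimates, so there is nothing to reconcile; the only trivial remark is that the optimization step implicitly assumes $\|g\|_{L^1}$ and $\|g\|_{L^r}$ are nonzero, the case $g\equiv 0$ being immediate.
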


\subsection{Proof of Proposition \ref{biot est}}

We make the proof in the unbounded case (which is the hardest case).
We decompose $R[\om]$ in two parts:
\[ R_1(x):=  \int_{\OM} \dfrac{(\Tc(x)-\Tc(y))^\perp}{|\Tc(x)-\Tc(y)|^2} \om(y)\, dy \text{ and } R_2(x):=  \int_{\OM} \dfrac{(\Tc(x)- \Tc(y)^*)^\perp}{|\Tc(x)- \Tc(y)^*|^2} \om(y)\, dy.\]

{\it a) Estimate and continuity of $R_1$.} 

Let $z:= \Tc(x)$ and $f(\y):= \om(\Tc^{-1}(\y)) |\det (D\Tc^{-1}(\y)) | \h_{\{|\y|\geq 1\}}$, with $\h_E$ the characteristic function of the set $E$. Making the change of variables $\y= \Tc(y)$, we find
\begin{equation*}
R_1 (\Tc^{-1}(z) ) = \int_{\R^2} \dfrac{(z-\y)^\perp}{|z-\y|}f(\y) \, d\y.
\end{equation*}

Changing variables back, we get 
\[ \|f \|_{L^1(\R^2)}=\|\om\|_{L^1}.\]
We choose $p_0>2$ such that $\det (D\Tc^{-1})$ belongs to $L^{p_0}_{\loc}(\overline{\OM})$ (see Remark \ref{DT loc}). If all the angles are greater than $\pi$, we can choose $p_0=\infty$ (thanks to Theorem \ref{grisvard} and Proposition \ref{T-inf}) and we would have $\|f \|_{L^\infty(\R^2)} \leq C\|\om \|_{L^\infty}.$ However, if there is one angle less than $\pi$, we have to decompose the integral in two parts:
\[R_1 (\Tc^{-1}(z) ) = \int_{|\y|\geq 2} \dfrac{(z-\y)^\perp}{|z-\y|^2}f(\y) \, d\y +  \int_{|\y| \leq 2} \dfrac{(z-\y)^\perp}{|z-\y|^2}f(\y) \, d\y\]
with
\[ \|f \|_{L^\infty(\R^2\setminus B(0,2))} \leq C_1 \|\om \|_{L^\infty}\]
by Proposition \ref{T-inf}, and 
\[ \|f \|_{L^{p_0}(B(0,2))} \leq C_2 \|\om \|_{L^\infty},\]
by Remark \ref{DT loc}. Then we use the classical estimate for the Biot-Savart kernel in $\R^2$ (see Lemma \ref{ift}):
\[ \Bigl|  \int_{|\y|\geq 2} \dfrac{(z-\y)^\perp}{|z-\y|^2}f(\y) \, d\y \Bigl| \leq C_0  \|f \|_{L^1(\R^2\setminus B(0,2))}^{1/2} \|f \|_{L^\infty(\R^2\setminus B(0,2))}^{1/2}  \leq C_4  \| \om \|_{L^1}^{1/2} \| \om \|_{L^\infty}^{1/2}\]
and
\[ \Bigl|  \int_{|\y|\leq 2} \dfrac{(z-\y)^\perp}{|z-\y|^2}f(\y) \, d\y \Bigl| \leq C_0 \|f \|_{L^1(B(0,2))}^{\frac{p_0-2}{2(p_0-1)}} \|f \|_{L^{p_0}(B(0,2))}^{\frac{p_0}{2(p_0-1)}}   \leq C_5 \| \om \|_{L^1}^{\frac{p_0-2}{2(p_0-1)}} \| \om \|_{L^\infty}^{\frac{p_0}{2(p_0-1)}}  \]
which gives the uniform estimate
\[ \|R_1 \|_{L^\infty(\OM)} \leq C (\| \om \|_{L^1}^{1/2} \| \om \|_{L^\infty}^{1/2} + \| \om \|_{L^1}^{a} \| \om \|_{L^\infty}^{1-a} )\]
with $a=\frac{p_0-2}{2(p_0-1)}$ including in $(0,1/2]$. Concerning the continuity, we approximate $f \chi_{B(0,2)}$ by $f_n \in C^\infty_c(B(0,2))$ and $f \chi_{B(0,2)^c}$ by $g_n \in C^\infty_c(B(0,2)^c)$ such that
\[ \|f_n-f\|_{L^1\cap L^{p_0}(B(0,2))} \to 0,\ \|g_n-f\|_{L^1(B(0,2)^c)} \to 0, \ \|g_n\|_{L^\infty} \leq C(f) \text{ as } n\to \infty.\]
As $f_n$ and $g_n$ are smooth, we infer that the functions
\[ z \mapsto  \int_{\R^2} \dfrac{\x^\perp}{|\x|^2}f_n(z-\x) \, d\x \text{ and }  t \mapsto  \int_{\R^2} \dfrac{\x^\perp}{|\x|^2}g_n(z-\x) \, d\x \]
are continuous. Moreover, we deduce from the previous estimates that
\begin{equation*}\begin{split}
\Bigl\| R_1 &(\Tc^{-1}(z)) - \int_{\R^2} \dfrac{(z-\y)^\perp}{|z-\y|^2}g_n(\y) \, d\y- \int_{\R^2} \dfrac{(z-\y)^\perp}{|z-\y|^2}f_n(\y) \, d\y\Bigl\|_{L^\infty(B(0,1)^c)}\\
&\leq C_0  \Bigl(\|f-g_n\|_{L^1(B(0,2)^c)}^{1/2} \|f-g_n \|_{L^\infty(B(0,2)^c)}^{1/2} + \|f-f_n \|_{L^1(B(0,2))}^{\frac{p_0-2}{2(p_0-1)}} \|f-f_n \|_{L^{p_0}(B(0,2))}^{\frac{p_0}{2(p_0-1)}} \Bigl).
\end{split}
\end{equation*}
Thanks to the limit $n\to \infty$, we prove the continuity of $R_1 \circ \Tc^{-1}$. Using Theorem \ref{grisvard}, we conclude that $R_1$ is continuous up to the boundary.

\medskip

{\it b) Estimate and continuity of $R_2$.} 

We use, as before, the notations $f$, $z$ and the change of variables $\y$
\begin{eqnarray*}
R_2 (\Tc^{-1}(z) ) &=&  \int_{|\y|\geq 1}\dfrac{(z-\y^*)^\perp} {|z- \y^*|^2}f(\y)d\y\\
&=&   \int_{|\y|\geq 2}\dfrac{(z-\y^*)^\perp} {|z- \y^*|^2}f(\y)d\y +   \int_{1\leq |\y|\leq 2}\dfrac{(z-\y^*)^\perp} {|z- \y^*|^2}f(\y)d\y \\
&:=& R_{21}(z)+R_{22}(z).
\end{eqnarray*}
If $|\y| \geq 2$, $|z- \y^*|\geq 1/2$ because $|z|\geq 1$ (see the definition of $\Tc$). Therefore, we obtain obviously that
\[ \| R_{21} \|_{L^{\infty}(B(0,1)^c)} \leq 2 \|f\|_{L^{1}(B(0,2)^c)} \leq 2 \| \om \|_{L^1}.\]
The continuity is easier than above:
\begin{itemize}
\item we approximate $f \chi_{B(0,2)^c}$ by $g_n \in C^\infty_c(B(0,2)^c)$ such that $\|g_n-f\|_{L^1(B(0,2)^c)} \to 0$ as $n\to \infty$;
\item the functions
\[ z \mapsto  \int_{|\y|\geq 2}\dfrac{(z-\y^*)^\perp} {|z- \y^*|^2}g_n(\y)d\y \]
is continuous up to the boundary $\pd B(0,1)$ because $|z- \y^*|\geq 1/2$;
\item the previous estimates gives
\[\Bigl\| R_{21}(z)-\int_{|\y|\geq 2}\dfrac{(z-\y^*)^\perp} {|z- \y^*|^2}g_n(\y)d\y \Bigl\|_{L^\infty(B(0,1)^c)} \leq 2 \|f-g_n \|_{L^{1}(B(0,2)^c)};\]
\end{itemize}
which gives the continuity of $R_{21}$.

Concerning $R_{22}$, we again change variables writing $\th=\y^*$, to obtain:
\[ R_{22} (z) =  \int_{1/2 \leq|\th|\leq 1} \frac{(z-\th)^\perp}{|z-\th|^2} f(\th^*) \frac{d\th}{|\th|^4}.\]
Let $g(\th):= \frac{f(\th^*)}{|\th|^4}$. As above, we deduce by changing variables back that
\[ \|g \|_{L^1(1/2\leq|\th|\leq 1)} \leq \|\om \|_{L^1}.\]
It is also easy to see that
\[ \|g\|_{L^{p_0}(1/2\leq|\th|\leq 1)}\leq  2^{\frac{4(p_0-1)}{p_0}} \| f \|_{L^{p_0}(B(0,2))}\leq C_6 \| \om \|_{L^\infty}.\]
Then, by the classical estimates of the Biot-Savart law in $\R^2$, we have
\[ \| R_{22} \|_{L^\infty(B(0,1)^c)} \leq C \| \om \|_{L^1}^{a} \| \om \|_{L^\infty}^{1-a}.\]
Reasoning as for $R_1$, where we approximate $g$, we get that $R_{22}$ is continuous.

The continuity of $\Tc$ allows us to conclude that $R_2$ is continuous up to the boundary, which ends the proof in the case of $\OM$ unbounded.

\medskip

{\bf Remark about the bounded case.}

Concerning $R_1$, we do not need to decompose the integral in two parts:
\[ \|f \|_{L^{p_0}(B(0,1))} \leq C_2 \|\om \|_{L^\infty},\]
where $f(\y):= \om( \Tc^{-1}(\y)) |\det (D\Tc^{-1}(\y)) | \h_{\{|\y|\leq 1\}}$.

Even of $R_2$, we directly have
\[R_2 (\Tc^{-1}(z) ) =  \int_{|\th|\geq 1} \frac{(z-\th)^\perp}{|z-\th|^2} f(\th^*) \frac{d\th}{|\th|^4} \]
and we conclude following the proof concerning $R_{22}$.

\subsection{Proof of Lemma \ref{ortho}}

Using the explicit formula of $\F$ and \eqref{biot unbd}, we write 

\begin{eqnarray*}
 u(x)\cdot \na\F^\e(x) & = &  u^{\perp}(x) \cdot \na^\perp\F^\e(x) \\ 
& = &  -\frac{1}{2\pi\e} \F'\Bigl(\frac{|\Tc(x)|-1}{\e} \Bigl) \int_{\OM}\Bigl(\dfrac{\Tc(x)-\Tc(y)}{|\Tc(x)-\Tc(y)|^2}-\dfrac{\Tc(x)- \Tc(y)^*}{|\Tc(x)- \Tc(y)^*|^2}\Bigl) \om(t,y)\, dy \\
&& \times D\Tc(x)D\Tc^T(x)\frac{\Tc(x)^\perp}{|\Tc(x)|}.
\end{eqnarray*}

As $\Tc$ is  holomorphic, $D\Tc$ is of the form 
$\begin{pmatrix}
a & b \\
-b & a
\end{pmatrix}$
 and we can check that $D\Tc(x)D\Tc^T(x)=(a^2+b^2)Id=|\det(D\Tc)(x)|Id$, so
\begin{equation*}
u(x) \cdot \na\F^\e(x)  =  \frac{\F'(\frac{|\Tc(x)|-1}{\e})|\det(D\Tc)(x)|}{2\pi\e |\Tc(x)|} \int_{\OM}\Bigl(\dfrac{\Tc(y)\cdot \Tc(x)^\perp}{|\Tc(x)-\Tc(y)|^2}-\dfrac{ \Tc(y)^* \cdot \Tc(x)^\perp}{|\Tc(x)- \Tc(y)^*|^2}\Bigl) \om(t,y)\ dy.
\end{equation*}

We compute the $L^1$ norm, next  we change variables twice $\y=\Tc(y)$ and $z=\Tc(x)$, to have

\begin{equation*}
\|u \cdot \na\F^\e\|_{L^1} =\frac{1}{2\pi\e} \int_{|z|\geq 1} \Bigl|\F'\Bigl(\frac{|z|-1}{\e}\Bigl)\Bigl| \Bigl| \displaystyle\int_{|\y|\geq 1} \Bigl(\dfrac{\y \cdot z^\perp/|z|}{|z-\y|^2}-\dfrac{ \y^* \cdot z^\perp/|z|}{|z-\y^*|^2}\Bigl) f(t,\y) \, d\y\Bigl|dz,
\end{equation*}
where $f(t,\y)= \om (t,\Tc^{-1}(\y)) |\det(D\Tc^{-1})(\y)|$.

Thanks to the definition of $\F$, we know that $\Bigl\|\frac{1}{\e}\F'\Bigl(\frac{|z|-1}{\e}\Bigl)\Bigl\|_{L^1}\leq C$. So it is sufficient to prove that
\begin{equation}
\label{tronc}
\Bigl\| \int_{|\y|\geq 1} \Bigl(\dfrac{\y\cdot z^\perp/|z|}{|z-\y|^2}-\dfrac{ \y^*\cdot z^\perp/|z|}{|z- \y^*|^2}\Bigl)f(t,\y)\, d\y\Bigl\|_{L^\infty(1+\e\leq |z|\leq 1+2\e)}\to 0
\end{equation}
as $\e\to 0$, uniformly in time.

Let
\[ A :=\dfrac{\y \cdot z^\perp/|z|}{|z-\y|^2}-\dfrac{ \y^* \cdot z^\perp/|z|}{|z- \y^*|^2}.\]
We compute
\begin{eqnarray*}
A&=& \Bigl(\dfrac{(|z|^2-2 z \cdot \y/|\y|^2+1/|\y|^2)-1/|\y|^2(|z|^2-2z \cdot \y+|\y|^2)}{|z-\y|^2|z- \y^*|^2}\Bigl)\y \cdot \frac{z^\perp}{|z|} \\
&=& \frac{(|z|^2-1)(1-1/|\y|^2)}{|z-\y|^2|z- \y^*|^2}\y \cdot \frac{z^\perp}{|z|}.
\end{eqnarray*}

We now use that $|z|\geq 1$, to write
\[ |z- \y^*|\geq  1-\frac{1}{|\y|}. \]
Moreover, $| \y^*|\leq 1$ allows to have
\[ |z- \y^*|\geq |z|-1. \]

We can now estimate $A$ by:
\[ |A| \leq \frac{(|z|+1)(1+1/|\y|)(|z|-1)^b}{ |z-\y|^2 |z- \y^*|^b} \Bigl| \y \cdot \frac{z^\perp}{|z|}\Bigl| \]
with $0\leq b \leq 1$, to be chosen later. We remark also that $\y \cdot \frac{z^\perp}{|z|}=(\y-z) \cdot \frac{z^\perp}{|z|}$ and the Cauchy-Schwarz inequality gives 
\[ \Bigl|\y \cdot \frac{z^\perp}{|z|}\Bigl|\leq |\y-z|. \]

We now use the fact that $|z|-1 \leq 2\e$, to estimate (\ref{tronc}):
\[ \Bigl| \int_{|\y|\geq 1}A f(t,\y) \, d\y \Bigl|\leq (2+2\e).2.(2\e)^b \int_{|\y|\geq 1} \frac{|f(t,\y)|}{|z-\y||z- \y^*|^b}d\y, \]
hence, the H\"older inequality gives
\[ \Bigl| \int_{|\y|\geq 1}A f(t,\y) \, d\y \Bigl|\leq (2+2\e).2.(2\e)^b  \Bigl\|\frac{|f(t,\y)|^{1/p}}{|z-\y|}\Bigl\|_{L^p}  \Bigl\|\frac{|f(t,\y)|^{1/q}}{|z- \y^*|^b}\Bigl\|_{L^q}\]
with $1/p+1/q=1$ chosen later.

In the same way we estimate $R_2$ in the proof of Proposition \ref{biot est}, we obtain for $bq=1$:
\[ \Bigl\|\frac{|f(t,\y)|^{1/q}}{|z- \y^*|^b}\Bigl\|_{L^q}=\Bigl(\int_{|\y|\geq 1} \frac{|f(t,\y)|}{|z-\y^*|}d\y\Bigl)^{1/q}\leq C_q, \]
where we have used that $\om$ belongs to $L^\infty(\R^+;L^1\cap L^\infty(\OM))$.

Now we use Lemma \ref{ift} for $f\in L^1\cap L^{p_0}$, with $p_0>2$ and for $f\in L^1\cap L^{\infty}$ (see the proof of Proposition \ref{biot est}). Then, we choose $p\in (1,2)$ such that $p_0> \frac{2}{2-p}$ and we follow the estimate of $R_1$ in the proof of Proposition \ref{biot est} to obtain:
\[ \Bigl\|\frac{|f(t,\y)|^{1/p}}{|z-\y|}\Bigl\|_{L^p}=\Bigl(\int_{|\y|\geq 1} \frac{|f(t,\y)|}{|z-\y|^p}d\y\Bigl)^{1/p} \leq C_p. \]
We have used again that $\om$ belongs to $L^\infty(\R^+;L^1\cap L^\infty(\OM))$.

Fixing a $p\in (1,2)$ such that $p_0> \frac{2}{2-p}$, it gives $q\in (2,\infty)$ and $b \in (0,1/2)$ and it follows 
\[ \|u \cdot \na\F^\e\|_{L^1} \leq C(2+2\e).2.(2\e)^{b}C_{p} C_{q} \]
which tends to zero when $\e$ tends to zero, uniformly in time.

\subsection{Proof of Lemma \ref{W11}}

Let $\mathbf{T}>0$ fixed. We rewrite \eqref{biot unbd}:
\begin{eqnarray*} 
u(x) &=& \frac{1}{2\pi} D\Tc^T (x) \Bigl( \int_{\OM} \Bigl(\frac{\Tc(x)-\Tc(y)}{|\Tc(x) - \Tc(y)|^2}- \frac{\Tc(x)-\Tc(y)^*}{|\Tc(x)-\Tc(y)^*|^2}\Bigl)^\perp \om(y)\, dy + \a \frac{\Tc(x)^\perp}{|\Tc(x)|^2}\Bigl)\\
&:= & \frac{1}{2\pi} D\Tc^T(x) h(\Tc(x))
\end{eqnarray*}
where $\a$ is bounded by $\|\g\|_{L^\infty([0,\mathbf{T}])} + \| \om \|_{L^\infty(L^1)}$ in $[0,\mathbf{T}]$ (see \eqref{g bd}).

We start by treating $h$. We change variable $\y=\Tc(y)$, and we obtain
\begin{eqnarray*}
h(z)&=& \int_{B(0,1)^c} \Bigl(\frac{z-\y}{|z-\y|^2}- \frac{z-\y^*}{|z-\y^*|^2}\Bigl)^\perp \om(\Tc^{-1}(\y)) |\det D\Tc^{-1}(\y)| \, d\y + \a \frac{z^\perp}{|z|^2} \\
&=&\int_{B(0,2)^c}\frac{(z-\y)^\perp}{|z-\y|^2} f(t,\y) \, d\y + \int_{B(0,2)\setminus B(0,1)} \frac{(z-\y)^\perp}{|z-\y|^2} f(t,\y) \, d\y - \int_{B(0,2)^c}\frac{(z-\y^*)^\perp}{|z-\y^*|^2} f(t,\y) \, d\y \\
&&- \int_{B(0,2)\setminus B(0,1)}\frac{(z-\y^*)^\perp}{|z-\y^*|^2} f(t,\y) \, d\y + \a \frac{z^\perp}{|z|^2}\\
&:=& h_1(z)+ h_2(z)-h_3(z)-h_4(z)+\a h_5(z),
\end{eqnarray*} 
with $f(t,\y)=\om(t,\Tc^{-1}(\y)) |\det D\Tc^{-1}(\y)|$ belongs to $L^\infty(L^1\cap L^{p_0}(B(0,2)\setminus B(0,1))$ with some $p_0>2$ and to $L^\infty(L^1\cap L^{\infty}(B(0,2)^c)$ (see the proof of Proposition \ref{biot est}). As $|z|= |\Tc(x)|\geq 1$, we are looking for estimates in $B(0,1)^c$. Obviously we have that 
\[ h_5 \text{ belongs to } L^{\infty}(B(0,1)^c) \text{ and } Dh_5 \text{ belongs to } L^{\infty}(B(0,1)^c).\]

Concerning $h_1$, we introduce $f_1:= f \chi_{B(0,2)^c}$ where $\chi_S$ denotes the characteristic function on $S$. Hence
\[ h_1(z)=\int_{\R^2}\frac{(z-\y)^\perp}{|z-\y|^2}f_1(\y) \, d\y \text{ with } f_1\in L^\infty(\R^+;L^1(\R^2)\cap L^{\infty}(\R^2)).\]
We have used the work made in the proof of Proposition \ref{biot est} about the computation of the $L^p$ norm of $f$ in terms of $\om$. The standard estimates on Biot-Savart kernel in $\R^2$ and Calderon-Zygmund inequality
give that
\[ h_1 \text{ belongs to } L^\infty(\R^+\times B(0,1)^c) \text{ and } Dh_1 \text{ belongs to } L^\infty(\R^+; L^{p}(B(0,1)^c)), \ \forall p\in (1,\infty).\]

For $h_2$, is almost the same argument: we introduce $f_2:= f \chi_{B(0,2)\setminus B(0,1)}$, hence
\[ h_2(z)=\int_{\R^2}\frac{(z-\y)^\perp}{|z-\y|^2}f_2(\y) \, d\y \text{ with } f_2\in L^\infty(\R^+;L^1(\R^2)\cap L^{p_0}(\R^2)).\]
The standard estimates on Biot-Savart kernel in $\R^2$ and Calderon-Zygmund inequality
give that
\[ h_2 \text{ belongs to } L^\infty(\R^+\times B(0,1)^c) \text{ and } Dh_2 \text{ belongs to } L^\infty(\R^+;L^{p_0}(B(0,1)^c)).\]

For $h_3$, we can remark that for any $\y \in B(0,2)^c$ we have $|z-\y^*|\geq \frac12$. Therefore, the function $(z,\y)\mapsto \frac{(z-\y^*)^\perp}{|z-\y^*|^2} $ is smooth in $B(0,1)^c\times B(0,2)^c$, which gives us, by a classical integration theorem, that
\[ h_3 \text{ belongs to } L^\infty(\R^+\times B(0,1)^c) \text{ and } Dh_3 \text{ belongs to } L^\infty(\R^+\times B(0,1)^c).\]

To treat the last term, we change variables $\theta = \y^*$
\[ h_4(z) = \int_{B(0,1)\setminus B(0,1/2)} \frac{(z-\theta)^\perp}{|z-\theta|^2} f(\theta^*) \frac{d\theta}{|\theta|^4}:= \int_{\R^2} \frac{(z-\theta)^\perp}{|z-\theta|^2} f_4(\theta)\, d\theta,\]
with $f_4(\theta):=\displaystyle \frac{f(t,\theta^*)}{|\theta|^4} \chi_{B(0,1)\setminus B(0,1/2)}(\theta)$ which belongs to $L^\infty(\R^+;L^1(\R^2)\cap L^{p_0}(\R^2))$. Therefore, standard estimates on Biot-Savart kernel and Calderon-Zygmund inequality
give that
\[ h_4 \text{ belongs to } L^\infty(\R^+\times B(0,1)^c) \text{ and } Dh_4 \text{ belongs to } L^\infty(\R^+;L^{p_0}(B(0,1)^c)).\]

Now, we come back to $u$. As $u (x) = \frac{1}{2\pi} D\Tc^T(x) h(\Tc(x))$, with $D\Tc$ belonging to $L^1_{\loc}(\overline{\OM})$ (see Remark \ref{DT loc}) and $h\circ \Tc$ uniformly bounded, we have that
\[ u \text{ belongs to } L^{\infty}([0,\mathbf{T}];L^1_{\loc} (\overline{\OM})).\]
Adding the bounded behavior of $D\Tc$ at infinity, we have that 
\[  u \text{ belongs to }L^\infty \left([0,\mathbf{T}] ; L^1(\overline{\OM})+ L^\infty(\overline{\OM})\right).\]

Moreover, we have
\begin{eqnarray*}
 |Du(x)| &\leq&  \frac{1}{2\pi}\Bigl( |D^2 \Tc(x)| |h(\Tc(x))| +|D\Tc(x)|^2 |(-Dh_3+\a Dh_5) (\Tc(x))|\Bigl)\\
 &&+ |D\Tc(x)|^2 |(Dh_1+ Dh_2 - Dh_4) (\Tc(x))|.
 \end{eqnarray*}
 For the first right hand side term, we know that $h\circ \Tc$ is uniformly bounded and that $D^2 \Tc$ belongs to $L^p_{\loc}(\overline{\OM})$ for any $p<4/3$ (see Theorem \ref{grisvard}). We see that the second right hand side term belongs to $L^{\infty}([0,\mathbf{T}] ; L^{4/3}_{\loc} (\overline{\OM}))$ because $D\Tc$ belongs to $L^{8/3}_{\loc}(\overline{\OM})$ and $(-Dh_3-\a Dh_5)(\Tc(x))$ belongs to $L^{\infty}([0,\mathbf{T}] \times \overline{\OM}))$.
 
Concerning the third right hand side term, we use that $\Tc$  holomorphic implies that $D\Tc$ is of the form 
$\begin{pmatrix}
a & b \\
-b & a
\end{pmatrix}$. Hence, we get easily that
\[ |D\Tc(x)|_\infty^2 = (\sup (|a|,|b|))^2 \leq a^2+b^2= |\det D\Tc(x)|.\]
Therefore, changing variables, we have for $i=1,2,4$ and $K$ any compact set of $\overline{\OM}$:
\[ \| |D\Tc| |Dh_i\circ \Tc| \|_{L^{2}(K)} \leq \| Dh_i \|_{L^{2}(\tilde K)}\]
with $\tilde K := \Tc(K)$ a compact set (by the continuity of $\Tc$), which is bounded because $2<p_0$. As $DT$ belongs to $L^p(K)$ for any $p<4$, we have by the Holder inequality that $|D\Tc|^2 |Dh_i\circ \Tc|$ is uniformly bounded in $L^p(K)$ for any $p\in [1,4/3)$. Its ends the proof.

\subsection{Proof of Proposition \ref{compact_vorticity}}

We set $\beta(t)=t^2$ and use \eqref{renorm} with this choice. Let $\F \in \mathcal{D}(\R^+\times \RR)$
\begin{equation*}
\int_{\RR} \F(T,x) (\bar\om)^2(T,x)\,dx - \int_{\RR} \F(0,x) (\bar\om)^2(0,x)\,dx =\int_0^T\int_{\RR} (\bar\om)^2 (\dt \F +\bar u\cdot \nabla \F)\,dx \,dt.
\end{equation*}

This is actually an improvement of \eqref{renorm}, in which the equality holds in $L^1_{\loc}(\R^+)$.  Indeed, we have $\pd_t \bar\om=-\diver (\bar u \bar\om)$ (in the sense of distributions) with $\bar\om\in L^\infty$ and $\bar u \in L^\infty_{\loc}(\R^+,L^p_{\loc}(\RR))$ for all $p<4$ (see \eqref{est u}), which implies that $\pd_t \bar\om$ belongs to $L^1_{\loc}(\R^+,W^{-1,p}_{\loc}(\RR))$. Hence, $\bar\om$ belongs to $C(\R^+,W^{-1,p}_{\loc}(\RR))\subset C_{w}(\R^+, L^2_{\loc}(\R^2))$, where $C_{w} L_{\loc}^{2}$ stands for the space of maps $f$ such that for any sequence $t_n\to t$, the sequence $f(t_n)$ converges to $f(t)$ weakly in $L^2_{\loc}$. Since on the other hand $t \mapsto \|\bar \om(t)\|_{L^2}$ is continuous by Remark \ref{remark : conserv}, we have $\bar\om \in C(\R^+,L^2(\RR))$. Therefore the previous integral equality holds for all $T$.

Now, we choose a good test function. We let $\F_0$ be a non-decreasing function on $\R$, which is equal to $1$ for $s\geq 2$ and vanishes for $s\leq 1$ and we set $\F(t,x)=\F_0(|x|/R(t))$, with $R(t)$ a smooth, positive and increasing function to be determined later on, such that $R(0)=R_0$. For this choice of $\F$, we have $(\om_0(x))^2 \F(0,x)\equiv 0$.

We compute then
\begin{equation*}
\na \F= \frac{x}{|x|}\frac{\F_0'}{R(t)}
\end{equation*}
and
\begin{equation*}
\pd_t \F = -\frac{R'(t)}{R^2(t)}|x|\F_0'.
\end{equation*}
We obtain
\begin{eqnarray*}
\int_{\RR} \F(T,x) (\bar\om)^2(T,x)\,dx & =&\int_0^T \int_{\RR} (\bar\om)^2 \frac{\F_0'(\frac{|x|}{R})}{R}\Bigl( \bar u(x) \cdot \frac{x}{|x|}-\frac{R'}{R}|x|\Bigl)\, dx\, dt\\
&\leq& \int_0^T \int_{\RR} (\bar\om)^2 \frac{|\F_0'|(\frac{|x|}{R})}{R} (C -R')\, dx\, dt,
\end{eqnarray*}
where $C$ is independent of $t$ and $x$. Indeed, we have that 
\[ u(t,x)=\dfrac{1}{2\pi}D\Tc^T(x) \Bigl(R[\om](x) +(\g+\int \om_0) \frac{\Tc(x)^\perp}{|\Tc(x)|^2} \Bigl)\]
with  $ |R[\om]| \leq C_1$ (see Proposition \ref{biot est}) and $1/|\Tc(x)|\leq 1$. Using Proposition \ref{T-inf}, we know that there exists a positive $C_2$ such that
\[ |D\Tc({x}) | \leq C_2 |\b|, \ \forall |x|\geq R_0.\]
 Putting together all these inequalities with \eqref{g bd}, we obtain 
\[C=\frac{1}{2\pi}C_2 |\b| \Bigl( C_1 + \|\g\|_{L^\infty([0,\mathbf{T}^*])}+\| \om_0\|_{L^1}\Bigl).\]
Taking $R(t) =R_0 + Ct$, we arrive at
\begin{equation*}
\int_{\RR} \F(T,x) (\bar\om)^2(T,x)\,dx\leq 0,
\end{equation*}
which ends the proof.

\subsection{Proof of Proposition \ref{prop : cont-velocity}} 

By the conservation of the total mass of $\om_i$ \eqref{om-est-1}, we have that
\[ \int_{\R^2} \oti (t,\cdot)  \equiv 0, \ \forall t\geq 0.\]
Moreover, Proposition \ref{compact_vorticity} states that there exists $C_1(\om_0,\OM,\g)$ such that $\om_1(t,\cdot)$ and $\om_2(t,\cdot)$ are compactly supported in $B(0,R_0+C_1 t)$. So we first infer that $\vti (t) \in L^2(\RR)$ for all $t$ (see e.g. \cite{maj-bert}). Using that $\|\om_i\|_{L^1(\OM)\cap L^\infty(\OM)} \in L^\infty(\R^+)$, we even obtain
\begin{equation}
\label{bound-velocity}
\vti \in L_{\loc}^{\infty}(\R^+,L^2(\RR)).
\end{equation}

We now turn to the first assertion in Proposition \ref{prop : cont-velocity}. By Lemma \ref{ift} and Calderon-Zygmund inequality we state that \eqref{om-est-2} implies that  $v_i=K_{\R^2}*\bar \om_i$ belongs to $L^\infty(\R^+\times \RR)$ and its gradient $\na v_i$ to $L^\infty(\R^+,L^4(\RR))$. On the other hand, since the vorticity $\om_i$ is compactly supported, we have for large $|x|$
\begin{equation*}
|v_i(t,x)|\leq \frac{C}{|x|} \int_{\RR} |\bar \om_i(t,y)|\, dy,
\end{equation*}
hence $v_i$ belongs to $L_{\loc}^{\infty}(\R^+,L^p(\RR))$ for all $p>2$. It follows in particular that
\begin{equation*}
v_i \in L_{\loc}^{\infty}(\R^+,W^{1,4}(\RR))
\end{equation*}
and also that $v_i \otimes v_i$ belongs to $L_{\loc}^\infty(L^{4/3})$. Since $v_i$ is divergence-free, we have $v_i \cdot \na v_i=\diver(v_i \otimes v_i)$, and so $v_i \cdot \na v_i \in L_{\loc}^2\big(\R^+,W^{-1,\frac{4}{3}}(\RR)\big)$.

Thanks to \eqref{est u}, we know that $v_i(t)\otimes w_i(t)$ belongs to $L_{\loc}^{4/3}$. At infinity, we use the explicit formula of $u$ \eqref{biot unbd}, the compact support of the vorticity and the behavior of $\Tc$ at infinity (Proposition \ref{T-inf}) to note that $w_i$ is bounded by $C/|x|$. $v_i$ has the same behavior at infinity, which belongs to $L^{8/3}$. This yields
\begin{equation*}
\diver(v_i\otimes w_i),\:\:\diver(w_i\otimes v_i)\in
L_{\loc}^2\big(\R^+,W^{-1,\frac{4}{3}}(\RR)\big).
\end{equation*}
 Besides, we can infer from the behavior of $\Tc$ on the boundary (Theorem \ref{grisvard}) and Proposition \ref{biot est} that $\tilde g_{v_i,\g_0}$, defined in \eqref{g_o_bis}, is uniformly bounded in $L^1(\pd\OM)$. Then we deduce from the embedding of $W^{1,4}(\RR)$ in $C_0^0(\RR)$ that $\tilde g_{v_i,\g_0} \delta_{\OM}$ belongs to $L_{\loc}^2(W^{-1,\frac{4}{3}})$. Therefore, $v_i \tilde g_{v_i,\g_0} \delta_{\OM}\in L_{\loc}^2(\R^+,W^{-1,\frac{4}{3}}(\RR)).$

According to \eqref{vit_equa}, we finally obtain
$$\langle \pd_t  v_i ,\F\rangle =\langle \pd_t v_i -\na p_i,\F\rangle \leq C\|\F\|_{L^2(W^{1,4}_\s)}$$
for all divergence-free smooth vector field  $\F$. This implies that
\begin{equation*}
\pd_t v_i \in L_{\loc}^2\big(\R^+,W^{-1,4/3}_\s(\RR)\big), \quad i=1,2,
\end{equation*}
and the same holds for $\pd_t \vti$. Now, since $\tilde v$ belongs to $L^2_{\loc}\big(\R^+,W^{1,4}_\s\big)$, we deduce from \eqref{bound-velocity} and Lemma 1.2 in Chapter III of \cite{temam} that $\tilde v$ is almost everywhere equal to a function continuous from $\R^+$ into $L^2$ and we have in the sense of distributions on $\R^+$:
\begin{equation*}
 \frac{d}{dt} \|\tilde v\|_{L^2(\RR)}^2 = 2 \langle \pd_t \tilde v,\tilde v \rangle_{W^{-1,4/3}_\s,W^{1,4}_\s}.
\end{equation*}
We finally conclude by using the fact that $\vti(0)=0$.

\section{Final remarks and comments}\label{sect : 6}

\subsection{No extraction in convergence results}

In \cite{taylor,lac_small, GV_lac}, the existence of a weak solution is a consequence of a compactness argument. Indeed, we consider therein the unique solutions $u_n$  of the Euler equations on  the smooth domain $\Omega_n$, which converges to $\OM$ in some senses. Then, in these articles, we extract a subsequence such that $u_{\f(n)} \to u$ and we check that $u$ is solution of the Euler equations in $\OM$. Putting together the present result with \cite{GV_lac}, we can state the following.

\begin{theorem} Let $\om_0$, $\g_0$, $\OM$ as in Theorems \ref{main 1} or \ref{main 2}. For any sequence of  smooth open simply connected domains (or exterior of simply connected domains) $\OM_n$ converging to $\OM$ in the Hausdorff sense, then the unique solution $u_n$ of the Euler equations on $\OM_n$, with initial datum $u_n^0$ such that
\[\diver u^0_n = 0, \  \curl u^0_n =\om_0, \  u^0_n \cdot \hat n\vert_{\pd \Omega_n} = 0, \   \lim_{|x| \rightarrow +\infty} u^0_n  = 0 ,  \  \oint_{\pd \OM_n} u^0_n \cdot \hat \tau\, ds=\g_0 \text{ (only for exterior domains)} , \]
converges in $L^2_{\loc}(\R^+\times \overline{\OM})$ to the unique solution $u$ of the Euler equations on $\OM$ with initial datum $u^0$ such that
\[\diver u^0 = 0, \  \curl u^0 =\om_0, \  u^0 \cdot \hat n\vert_{\pd \Omega} = 0, \   \lim_{|x| \rightarrow +\infty} u^0  = 0 ,   \  \oint_{\pd \OM} u^0 \cdot \hat \tau\, ds=\g_0 \text{ (only for exterior domains)}.  \]
\end{theorem}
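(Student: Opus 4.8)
The plan is to combine the compactness argument underlying the existence result of \cite{GV_lac} with the uniqueness established in Theorems \ref{main 1} and \ref{main 2}, and then to remove the extraction of subsequences by the standard Urysohn subsequence principle: in a metric space, if every subsequence of $(u_n)$ admits a further subsequence converging to one and the same limit $u$, then the whole sequence converges to $u$. Thus the ``no extraction'' statement is nothing but existence-via-compactness plus uniqueness.

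First I would recall precisely what \cite{GV_lac} provides. By the Biot--Savart law \eqref{biot bd}--\eqref{biot unbd} on each smooth domain $\OM_n$, the initial data $u_n^0$ are determined by the fixed vorticity $\om_0$ and circulation $\g_0$, and the conservation of the $L^1\cap L^\infty$ norm of the vorticity \eqref{om-est-2} gives uniform (in $n$, locally in $t$) bounds on $\|\curl u_n(t,\cdot)\|_{L^1\cap L^\infty}$. Together with the $\G$-convergence of $\OM_n$ to $\OM$, this yields that $(u_n)$ is relatively compact in $L^2_{\loc}(\R^+\times\overline{\OM})$ and that every limit point is a global weak solution of the Euler equations on $\OM$ in the sense of \eqref{imperm2}--\eqref{Eulerweak}, with an initial datum $u^0$ carrying the prescribed $\curl u^0=\om_0$ and circulation $\g_0$. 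Since $\curl u_n^0\equiv\om_0$ for every $n$, the sign hypothesis on the vorticity (and, in the exterior case, the compatibility condition on $\g_0$) is automatically inherited by any limit point.

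Next I would invoke uniqueness. Fix any subsequence $(u_{n_k})$. By the compactness just recalled, it admits a further subsequence converging in $L^2_{\loc}(\R^+\times\overline{\OM})$ to some weak solution $\tilde u$ on $\OM$ with initial datum $u^0$ and with $\curl\tilde u$ of the prescribed sign. By Theorem \ref{main 1} in the bounded case (resp. Theorem \ref{main 2} in the exterior case) such a weak solution is unique, hence $\tilde u=u$ regardless of the subsequence chosen. Since every subsequence therefore has a further subsequence converging to the same limit $u$, the full sequence $(u_n)$ converges to $u$ in $L^2_{\loc}(\R^+\times\overline{\OM})$, which is the assertion.

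The only genuinely delicate point is the first step: verifying that the limit points produced by the compactness argument are indeed weak solutions carrying the correct initial datum $u^0$ and the correct circulation $\g_0$. This requires controlling the convergence of the Biot--Savart operators on $\OM_n$ towards the one on $\OM$ and passing to the limit in \eqref{Eulerweak} across the varying domains. All of this, however, is already carried out in \cite{GV_lac}; the contribution here is simply to observe that the hypotheses of Theorems \ref{main 1}--\ref{main 2} are satisfied by \emph{every} limit point, which is precisely what makes the subsequence principle applicable and upgrades the convergence from a subsequence to the whole sequence.
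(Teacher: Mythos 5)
Your proposal is correct and is exactly the argument the paper intends: the theorem is stated right after the paper observes that combining the compactness/existence machinery of \cite{GV_lac} with the uniqueness of Theorems \ref{main 1}--\ref{main 2} removes the need for subsequence extraction, which is precisely your Urysohn-subsequence-principle argument. The paper gives no further proof, so your write-up (including the observation that the sign and circulation hypotheses are inherited by every limit point, making the uniqueness theorems applicable) matches and fleshes out the paper's reasoning.
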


\subsection{Special vortex sheet}

In \cite{lac_euler}, we consider some smooth domains $\OM_\e$ which shrink to a $C^2$ Jordan arc $\G$ as $\e$ tends to zero. For $\om_0 \in L^\infty_c(\G^c)$ and $\g\in \R$ given, we denote by $(u_\eps,\omega_\eps)$ the corresponding regular solutions of the Euler equations on $\Pi_\e:= \R^2\setminus \OM_\e$. Up to a truncated smoothly over a size  $\eps$ around the obstacle, it is proved therein that the resulting truncations 
$\tilde u_\eps$ and $\tilde \omega_\eps$, defined over the whole of $\R^2$,  converge in appropriate topologies to   the solutions $\tilde u$, $\tilde \omega$  of the system
\begin{equation} \label{vorticityformulation2}
\left\{
\begin{aligned}
& \pd_t \tilde \omega + \tilde u \cdot \na  \tilde \omega = 0, \quad t > 0, \:  x \in \R^2, \\
& \diver \tilde u = 0,    \quad t > 0, \:  x \in \R^2, \\
&\curl \tilde u  = \tilde \omega +  g_{\tilde \omega,\g} \delta_{\G}, \quad t > 0, \:  x \in \R^2.
\end{aligned}
\right.
\end{equation}
This is an Euler like equation, modified by a Dirac mass along the arc. The density function $g_{\tilde \omega,\g}$ is given explicitly in terms of $\tilde \omega$ and ${\G}$. Moreover, it is shown that it is equal to the jump of the tangential component of the velocity across the arc. We refer to \cite{lac_euler} for all necessary details.

Actually, the presence of this additional measure is mandatory in order that the velocity $\tilde u$ is tangent to the curve, with circulation $\g$ around it.

Therefore, in the exterior of a Jordan arc, \eqref{vorticityformulation2} appears to be a special vortex sheet, ``special'' because the support of the dirac mass does not move (staying to be $\G$) and because the normal component of the velocity on the curve is equal to zero. For a general vortex sheet, we can prove that the normal component is continuous, but not necessarily zero. In both case, we have a jump of the tangential component.

A consequence of the present work is the uniqueness of a solution of  \eqref{vorticityformulation2}, with the good sign conditions for $\om_0$ and $\g$ (see Theorem \ref{main 2}).

For instance, if we assume that $\G$ is the segment $[(-1,0);(1,0)]$, then we have the explicit expression of the harmonic vector field thanks to the Joukowski function, and we can find in \cite[p. 1144]{lac_euler} the following:
$$\curl H_\G = \frac{1}{\pi} \frac{1}{\sqrt{1-x_1^2}} \chi_{(-1,1)}(x_1) \d_0(x_2).$$
Then, choosing $\om_0\equiv 0$ and $\gamma=1$, we have proven that the stationary shear flow $u(t,x)=H_\G(x)$ is the unique solution of the Euler equations with initial vorticity $\frac{1}{\pi} \frac{1}{\sqrt{1-x_1^2}} \chi_{(-1,1)}(x_1) \d_0(x_2)$.

Adding a vorticity or considering other shape for $\G$ complicates a lot the expression of $g_{\tilde \omega,\g}$ (see \cite{lac_euler}). In particular, we do not prove the uniqueness for the so-called Prandtl-Munk vortex sheet: $\frac{1}{\pi} \frac{x_1}{\sqrt{1-x_1^2}} \chi_{(-1,1)}(x_1) \d_0(x_2)$.

\subsection{Extension for constant vorticity near the boundary} As it is remarked several times, the crucial point is to prove that the vorticity never meets the boundary if we consider an initial vorticity compactly supported in $\OM$. However, we can extend easily this result to the case of an initial vorticity constant to the boundary. 
Indeed, for $\a\in \R$ given, choosing $\beta(t)=(t-\a)^2$ in the proof of Proposition \ref{constant_vorticity_2} gives in the same way the following.
\begin{proposition}
Let $\om$ be a global weak solution of \eqref{transport*} such that $\om_0$ is compactly supported in $\overline{\OM}$ and such that $\om_0\equiv \a$ in a neighborhood of the boundary.
If $\om_0$ is non-positive and $\g_0\geq -\int \om_0$ (only for exterior domains), then, for any ${\mathbf{T}^*}>0$, there exists a neighborhood $U_{{\mathbf{T}^*}}$ of $\pd{\OM}$ such that
\begin{eqnarray*}
\om(t)\equiv \a \qquad \textrm{on \; \;} U_{{\mathbf{T}^*}},\qquad \forall t\in [0,{\mathbf{T}^*}].
\end{eqnarray*}
\end{proposition}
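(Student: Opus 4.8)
The plan is to rerun the proof of Proposition \ref{constant_vorticity_2} essentially verbatim, changing only the renormalizing nonlinearity: where that proof uses $\beta(t)=t^2$ to trap the level set $\{\om=0\}$ near the boundary, I would use $\beta(t)=(t-\a)^2$ to trap $\{\om=\a\}$. First I would check that nothing changes at the level of the Liapounov function $L_1$ from \eqref{L1}. The sign of the vorticity is still conserved by the renormalization theory (Remark \ref{remark : conserv}), so $\om(t,\cdot)\leq 0$, whence Corollary \ref{lem : sign L1} and Lemmas \ref{L1 maj}, \ref{L1 est}, \ref{dtL1 est} remain in force: these bear only on $L_1$ and use the sign of $\om$ and of $\g_0+\int\om_0$, not the boundary value $\a$. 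In particular I keep \eqref{L 22}, \eqref{L 12}, \eqref{L 32}, and the decisive inequality \eqref{L bis}. Proposition \ref{compact_vorticity} applies unchanged since $\om_0$ still has bounded support, giving $\supp\om(t,\cdot)\subset B(0,R_{\mathbf{T}^*})$ on $[0,\mathbf{T}^*]$.

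Next I would plug $\beta(\om)=(\om-\a)^2$ into the renormalization identity \eqref{renorm} with the \emph{same} test function
\[\F(t,x)=\chi_0\!\left(\frac{-\ln L_1(t,x)+\ln C_4}{R(t)}\right),\]
where $R$ is increasing and to be fixed. The two structural facts of the original proof survive untouched: the orthogonality $u\cdot\na\F\equiv 0$, which follows from $\na_x L_1=-u^\perp$, and the expression for $\dt\F$. Since $(\om-\a)^2\geq 0$ and $\chi_0'\geq 0$ on the shell $\tfrac12\leq\frac{-\ln L_1+\ln C_4}{R}\leq 1$, the identity collapses --- exactly as before, using \eqref{L bis} --- to
\[\int_{\RR}\F(T,x)(\om(T,x)-\a)^2\,dx\leq\int_{\RR}\F(0,x)(\om_0(x)-\a)^2\,dx,\]
once $R(t)=\l_0\mathrm{e}^{2C_6 t}-C_5/C_6$ is chosen to make the bracket $-\tfrac{R'}{2}+C_5+C_6R$ non-positive.

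The genuinely new input is that the right-hand side vanishes. By hypothesis $\om_0\equiv\a$ on a set containing $\{1<|\Tc(x)|<1+\mu_0\}$ for some $\mu_0>0$; since $\{\F(0,\cdot)\neq 0\}$ shrinks toward $\pd\OM$ as $\l_0$ grows (from $L_1(0,x)\geq C_2(|\Tc(x)|-1)$), I would take $\l_0$ large enough that $\{\F(0,\cdot)\neq 0\}\subset\{1<|\Tc(x)|<1+\mu_0\}$, on which $(\om_0-\a)^2\equiv 0$. Hence $\F(0,x)(\om_0(x)-\a)^2\equiv 0$, so $\F(T,x)(\om(T,x)-\a)^2\equiv 0$ for every $T\in[0,\mathbf{T}^*]$, and the level-set estimate \eqref{ineq1} then forces $\om(T,\cdot)\equiv\a$ on the fixed neighborhood $U_{\mathbf{T}^*}=\Tc^{-1}\!\big(B(0,1+\mathrm{e}^{-\frac{2}{C_1}(R(\mathbf{T}^*)-\ln C_4)})\setminus B(0,1)\big)$ of $\pd\OM$, as claimed.

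The only point requiring care beyond a literal copy is the integrability bookkeeping in \eqref{renorm}: here $\beta(0)=\a^2\neq 0$, so $(\om-\a)^2$ is \emph{not} integrable on $\R^2$ (it equals $\a^2$ off the bounded set $\supp\om$), and Remark \ref{remark : conserv}(1) does not apply directly. I expect this to be the main obstacle, and I would resolve it by observing that $\F$ itself has bounded support in space: $\F\neq 0$ forces $L_1<C_4\mathrm{e}^{-R/2}$, hence $x$ in a bounded neighborhood of the compact set $\pd\OM$ (using $L_1>0$ and $L_1\to 0$ iff $x\to\pd\OM$, Corollary \ref{lem : sign L1}). Thus every integral in \eqref{renorm} runs over a fixed bounded region where $(\om-\a)^2$ is bounded, so all manipulations are legitimate and the condition $\beta(0)=0$ is not needed. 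The $C^1$ regularity of $L_1$ up to $\pd\OM$ away from the corners, coming from the continuity of $u$ via $\na_x L_1=-u^\perp$ and Proposition \ref{biot est}, ensures that $\F$ is an admissible test function, exactly as in Proposition \ref{constant_vorticity_2}.
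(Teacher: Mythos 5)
Your strategy coincides with the paper's own proof: the paper establishes this proposition precisely by rerunning the proof of Proposition \ref{constant_vorticity_2} with $\beta(t)=(t-\a)^2$, keeping the same test function $\F$, the same Liapounov estimates (Lemmas \ref{L1 maj}, \ref{L1 est}, \ref{dtL1 est} and \eqref{L bis}, which indeed use only the conserved sign of $\om$ and of $\g_0+\int\om_0$, not the boundary value $\a$), and the same choice of $R(t)$. You were also right to single out $\beta(0)=\a^2\neq 0$ as the one point where a literal copy is not covered by Remark \ref{remark : conserv}(1); the paper is silent on this.

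However, your resolution of that point has a genuine gap. The identity \eqref{renorm} is set on $\R^2$ with the extensions $\bar\om$, $\bar u$ of Proposition \ref{tour extension}, and the region where $\bar\om\equiv 0$ while $\F$ cannot be made to vanish is not empty: it is the obstacle $\Cc$ in the exterior case (resp.\ $\R^2\setminus\OM$ in the bounded case). Since $L_1(t,x)\to 0$ as $x\to\pd\OM$, any admissible non-negative continuous extension of $\F$ equals $1$ on $\pd\OM$, and the natural one is $\equiv 1$ on a full neighborhood of $\Cc$; hence $\int_{\R^2}\F(0,x)\,(\bar\om_0(x)-\a)^2\,dx\geq \a^2|\Cc|>0$ whenever $\a\neq 0$. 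So your claim ``$\F(0,x)(\om_0(x)-\a)^2\equiv 0$'' is true on $\OM$ but false on $\Cc$, and the inequality you derive only gives $\int\F(T)(\bar\om(T)-\a)^2\leq \a^2|\Cc|$, from which no pointwise conclusion follows.

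The repair is short but must be said. On $\Cc$ one has $\bar u\equiv 0$, $\bar\om\equiv 0$, and $\F\equiv 1$ for all times, so $\pd_t\F$ and $\bar u\cdot\na\F$ vanish there (no contribution to the right-hand side of \eqref{renorm}), while the $\Cc$-part of $\int\F(t)(\bar\om(t)-\a)^2$ equals the time-independent constant $\a^2|\Cc|$; subtracting it from both sides of your inequality leaves exactly $\int_{\OM}\F(T)(\om(T)-\a)^2\leq\int_{\OM}\F(0)(\om_0-\a)^2=0$, after which your argument concludes as written. In the bounded case one must in addition \emph{not} extend $\F$ by $1$ on the whole unbounded set $\R^2\setminus\OM$ (this destroys both the compact support you invoke and integrability), but by a fixed smooth cutoff equal to $1$ only near $\pd\OM$; the same cancellation then applies. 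Two smaller points: your compact-support claim for $\F$ needs the lower bound on $L_1$ to hold also outside $B(0,R_{\mathbf{T}^*})$ (it does, by the $V_1/V_2$ mass argument of Lemma \ref{L1 est} together with the non-negative harmonic part), and, since $(\om-\a)^2$, unlike $\om^2$, is not supported in $B(0,R_{\mathbf{T}^*})$, invoking \eqref{L bis} requires that the shell $\supp\chi_0'$ lie inside $B(0,R_{\mathbf{T}^*})\cap\OM$, which follows for $\l_0$ large from your observation that $\{\F\neq 0\}\cap\OM$ is a small neighborhood of $\pd\OM$.
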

Therefore, in the proof of the uniqueness, we still have on $U$
\[ \curl \tilde v = \curl v_1-\curl v_2 = \a-\a=0,\]
which implies that the velocity $\tilde v$ is harmonic near the boundary, allowing us to follow exactly the proof made in Section \ref{sect : 5}.

\subsection{Liapounov and sign conditions}

Let us present in this subsection the different Liapounov functions, the advantage of each, and why it is specific to the case studied.

{\em Vortex wave system in $\R^2$.} Let us consider that the initial vorticity is composed on a regular part plus a dirac mass centered at the point $z(t)$. Then Marchioro and Pulvirenti proved in \cite{mar_pul} that there exists one solution to the following system:
\begin{equation*}\begin{cases}
v(t,\cdot)=(K_{\R^2} \ast \omega)(\cdot,t),\\
 \dot{z}(t)=v(t,z(t)),\\
\dot{\,\phi}_x(t)=v(t,\phi_x(t))+ \frac{(\phi_x(t)-z(t))^\perp}{2\pi |\phi_x(t)-z(t)|^2},\\
\phi_x(0)=x, \; x\neq z_0 ,\\
\omega(t,\phi_x(t))=\omega_0(x),
\end{cases}
\end{equation*}
which means that the point vortex $z(t)$ moves under the velocity field $v$ produced by the regular part $\om$ of the vorticity, whereas the regular part and the vortex point give rise to a smooth flow $\phi$ along which $\om$ is constant. In this case, we can prove that the trajectories never meet the point vortex considering the following Liapounov function:
\[ L(t) := - \ln | \phi_x(t) - z(t) |,\]
for $x\neq z_0$ fixed. We note that $L$ goes to $+\infty$ iff $\phi_x(t) \to z(t)$, so we want to prove that $L$ stays bounded. Next we compute:
\[ L'(t) =- \frac{ ( \phi_x(t) - z(t)) \cdot (\dot{\phi_x}(t) - \dot{z}(t) ) }{ | \phi_x(t) - z(t) |^2} =- \frac{ ( \phi_x(t) - z(t)) \cdot (v(t,\phi_x(t)) - v(t,z(t)) ) }{ | \phi_x(t) - z(t) |^2}.\]
Next, we use that the regular part $v$ is log-lipschitz in order to obtain a Gronwall-type inequality. To summarize, we remark that in the case, the important points are:
\[ L(t) \to \infty \text{ iff } \phi_x(t) \to z(t) \quad \text{and} \quad  ( \phi_x(t) - z(t))  \cdot  \frac{(\phi_x(t)-z(t))^\perp}{2\pi |\phi_x(t)-z(t)|^2}\equiv 0\]
removing the singular part.

\bigskip

{\em Dirac mass fixed in $\R^2$.} Marchioro in \cite{mar} studied exactly the same problem as above, assuming that the vortex mass cannot move. Therefore, the previous Liapounov does not work, because we do not have a difference of two velocities and we cannot use the log-lipschitz regularity. In this article, the author introduced a new Liapounov:
\[ L(t) :=  -\int_{\R^2} \Bigl(\ln|\phi_x(t) -y|\Bigl) \om(t,y)\, dy   - \ln | \phi_x(t) - z_0 |,\]
where the first integral is the stream function associated to $v$. Then, the first step was to prove that this integral is bounded, which implies that $L$ goes to $+\infty$ iff $\phi_x(t) \to z_0$. Next, he computed:
\begin{eqnarray*}
 L'(t) &=&-\Bigl( \int_{\R^2} \frac{\phi_x(t) -y}{|\phi_x(t) -y|^2} \om(t,y)\, dy + \frac{ \phi_x(t) - z_0 }{ | \phi_x(t) - z_0 |^2} \Bigl)\cdot \dot{\phi_x}(t) -\int_{\R^2} \Bigl(\ln|\phi_x(t) -y|\Bigl) \pd_t \om(t,y)\, dy \\
 &=&-\int_{\R^2} \Bigl(\ln|\phi_x(t) -y|\Bigl) \pd_t \om(t,y)\, dy = -\int_{\R^2} \nabla \Bigl(\ln|\phi_x(t) -y|\Bigl) \cdot \Bigl(v(t,y)+ \frac{(y-z_0)^\perp}{2\pi |y-z_0|^2}\Bigl)\om(t,y)\, dy
\end{eqnarray*}
Next, the second step was to prove some good estimate for the right hand side integral in order to conclude by the Gronwall lemma. Here, we see that the singular term is now passed in a integral, which is bounded. Similarly, we note that the important points in this case are:
\[ L(t) \to \infty \text{ iff } \phi_x(t) \to z_0 \quad \text{and} \quad \Bigl( \int_{\R^2} \frac{\phi_x(t) -y}{|\phi_x(t) -y|^2} \om(t,y)\, dy + \frac{ \phi_x(t) - z_0 }{ | \phi_x(t) - z_0 |^2} \Bigl)\cdot \dot{\phi_x}(t)\equiv 0.\]

\bigskip

{\em Interior or exterior of simply connected domains.} In our case, we have again an explicit formula of the velocity by the Biot-Savart law (see \eqref{biot bd} and \eqref{biot unbd}). As the velocity near the boundary blows up, we have to make appear some cancellation as Marchioro did, in order that the singular part goes in an integral. To do that, we introduce the stream function associated to the velocity:
\[L_1(t,x):= \frac1{2\pi} \int_{\OM} \ln\Bigl( \frac{|\Tc(x)-\Tc(y)|}{|\Tc(x)-\Tc(y)^*||\Tc(y)|} \Bigl)\om(y) \, dy+ \frac{\a}{2\pi}\ln |\Tc(x)| \]
with $\a=0$ in the bounded case. However, as this function tends to zero (instead to $\infty$) when $x \to \pd \OM$ (see Lemma \ref{L1 maj}), we add a logarithm:
\[ L(t):=-\ln | L_1(t,\phi_x(t)) |,\]
and the goal is to prove that $L$ stays bounded. Then, we computed in Section \ref{sect : 3}
\[ L'(t) = -\frac{ \pd_t L_1(t,\phi_x(t)) }{| L_1(t,\phi_x(t)) |},\]
and we proved that $\pd_t L_1$  tends to zero as $\phi_x(t) \to \pd \OM$, comparing the rate with $L_1$. Then, we see here that it is important that $\pd_t L_1$ goes to zero where $L_1$ tends to zero. We managed to prove that $\pd_t L_1$ tends to zero near the boundary, and the sign condition allows us to state that the boundary is the only set where $L_1$ vanishes (see Lemma \ref{L1 est}). For instance, in bounded domain (i.e. $\a=0$) we see that a vorticity with different sign can imply that $L_1=0$ somewhere else than on $\pd \OM$. This last remark is the main reason of the sign condition of the vorticity. Next, the sign condition on the circulation follows from the fact that we want the same sign for both terms in $L_1$. 

Therefore, one difference with the case studied by Marchioro is that the stream function of the harmonic vector field does not blow-up. To conclude, let us mention that the Liapounov method is specific to the case studied and it is hard to adapt for other cases. For example, we have presented here the case of dirac mass when $\dot{z}(t)=v(t,z(t))$, when $\dot{z}(t)=0$, but we do not know how to prove for other dynamics, like e.g. $\dot{z}(t)=(1,0)$.

\begin{remark}
In Section \ref{sect : 5}, we have proved the uniqueness up to the time $T$, only using that the vorticity does not meet the boundary between $[0,T]$. Therefore, without any sign condition about the initial vorticity, it is an easy consequence of the uniform estimate of the velocity far away the boundary that we have local uniqueness for any $\om_0\in L^\infty_c (\OM)$. The main part of this paper is to prove the global uniqueness.
 \end{remark}

\section*{Acknowledgment}

I want to thank Milton C. Lopes Filho and Benoit Pausader for early discussions about how to find the good Liapounov function. I also want to thank David G\'erard-Varet and Olivier Glass for several fruitful discussions, and Hungjie Dong for giving me the reference \cite{kenig} with a couter-exemple of a $C^1$ domain which does not verify the elliptic regularity for the laplace problem. Finally, I would like to give many thanks to Isabelle Gallagher and Evelyne Miot for their helpful comments about this report. I'm partially supported by the Agence Nationale de la Recherche, Project MathOc\'ean, grant ANR-08-BLAN-0301-01 and by the Project ``Instabilities in Hydrodynamics'' financed by Paris city hall (program ``Emergences'') and the Fondation Sciences Math\'ematiques de Paris.

\end{document}